\newtheorem{theorem}{Theorem}[section]
\newtheorem{corollary}[theorem]{Corollary}
\newtheorem{lemma}[theorem]{Lemma}
\newtheorem{proposition}[theorem]{Proposition}
\newtheorem{Definition}[theorem]{Definition}
\newtheorem{Example}[theorem]{Example}
\newtheorem{Remark}[theorem]{Remark}
\newenvironment{remark}{\begin{Remark}\begin{em}}{\end{em}\end{Remark}}
\newenvironment{example}{\begin{Example}\begin{em}}{\end{em}\end{Example}}
\newenvironment{definition}{\begin{Definition}\begin{em}}{\end{em}\end{Definition}}
\newcommand{\uu}{\mathbf{u}}
\newcommand{\vv}{\mathbf{v}}
\DeclareMathOperator{\tr}{tr}
\DeclareMathOperator{\gyr}{gyr}
\address{Sejong Kim, Department of Mathematics, Chungbuk National University, Cheongju 28644, Korea}
\email{skim@chungbuk.ac.kr}
\begin{document}

\author{Sejong Kim}

\title[Operator inequalities and gyrolines of the weighted geometric means]{Operator inequalities and gyrolines of the weighted geometric means}

\date{\today}
\maketitle

\begin{abstract}
We consider in this paper two different types of the weighted geometric means of positive definite operators. We show the component-wise bijection of these geometric means and give a geometric property of the spectral geometric mean as a metric midpoint. Moreover, several interesting inequalities related with the geometric means of positive definite operators will be shown. We also see the meaning of weighted geometric means in the gyrogroup structure with finite dimension and find the formulas of weighted geometric means of $2$-by-$2$ positive definite matrices and density matrices.

\vspace{5mm}

\noindent {\bf Keywords}: Positive definite operator, Loewner order, geometric mean, spectral geometric mean, gyrogroup, gyroline
\end{abstract}

\section{Introduction}

A geometric mean of two positive real numbers is the length of the side of the square with the same area of the rectangle with sides of given positive real numbers. It was introduced first in Euclid's \emph{Elements} (Book II, Proposition 14), and since then many characterizations have been studied (see Section 1 in \cite{LL}). During several decades, a variety of construction schemes of the geometric means for positive operators and matrices together with their properties and applications have been widely developed.

Since Pusz and Woronowicz \cite{PW} has defined a geometric mean $A \# B$ of positive definite matrices $A$ and $B$,
\begin{displaymath}
A \# B = A^{1/2} (A^{-1/2} B A^{-1/2})^{1/2} A^{1/2}
\end{displaymath}
many properties on the finite- and infinite-dimensional settings have been found. The geometric mean of positive definite matrices $A$ and $B$ is a unique midpoint of the Riemannian geodesic, called the weighted geometric mean of $A$ and $B$:
\begin{displaymath}
\gamma(t) = A^{1/2} (A^{-1/2} B A^{-1/2})^{t} A^{1/2} =: A \#_{t} B, \ t \in [0,1]
\end{displaymath}
connecting $A$ and $B$ with respect to the Riemannian trace metric $\delta(A, B) = \Vert \log A^{-1/2} B A^{-1/2} \Vert_{2}$. Since Kubo and Ando \cite{KA} have established the geometric mean of positive definite operators, many theoretical and computational research topics including the operator inequality and the extension theory to multi-variable geometric means have been widely studied.

On the other hand, Fiedler and Pt\'{a}k \cite{FP} have introduced and studied a new geometric mean $A \natural B$ of two positive definite matrices $A$ and $B$:
\begin{displaymath}
A \natural B = (A^{-1} \# B)^{1/2} A (A^{-1} \# B)^{1/2}
\end{displaymath}
having analogous properties of the geometric mean. Since the eigenvalues of $A \natural B$ are the same as the positive square roots of the eigenvalues of $A B$, we call it the spectral geometric mean. It can be naturally generalized to the weighted spectral geometric mean as a differentiable curve
\begin{displaymath}
\beta(t) = (A^{-1} \# B)^{t} A (A^{-1} \# B)^{t} =: A \natural_{t} B, \ t \in [0,1].
\end{displaymath}
Note that $A \natural B = A \natural_{1/2} B$ and $A \natural_{t} B = A^{1-t} B^{t}$ for any commuting $A$ and $B$. Several interesting properties of the weighted spectral geometric mean have been found \cite{AKL, KL, LeeL}, but not as much as the geometric mean.

The main goal of this paper is to investigate those weighted geometric means of positive definite operators with operator inequality and applications to the finite-dimensional gyrogroup structure, which provides an algebraic tool for the hyperbolic geometry and the theory of special relativity. In more details, we give a geometric property of the spectral geometric mean for positive definite operators as a metric midpoint, see the meaning of two weighted geometric means on the gyrogroup structure of positive definite Hermitian matrices and density matrices, respectively, and find the explicit formulas of two weighted geometric means for $2 \times 2$ positive definite matrices and density matrices.

The structure of this article is organized as follows. We show in Section 2 the component-wise bijection of these weighted geometric means for positive definite operators and give a geometric property of the spectral geometric mean as a midpoint with respect to the new semi-metric $d(A, B) = 2 \Vert \log (A^{-1} \# B) \Vert$. In Section 3 we prove several operator inequalities of two weighted geometric means including the Ando-Hiai inequality with sufficient condition of the spectral geometric mean. In Section 4 we see a connection between the weighted (spectral) geometric means and the gyroline (cogyroline, respectively) on the open convex cone of positive definite matrices and on the gyrovector space of invertible density matrices. We also provide in Section 5 the explicit formulas of the weighted geometric means of $2 \times 2$ positive definite matrices and invertible density matrices as a linear combination. Finally, we close with some remarks about the semi-metric and an inequality relation with the Riemannian trace metric.

\section{Two-variable geometric means of positive definite operators}

Let $B(\mathcal{H})$ be the Banach space of all bounded linear operators on a Hilbert space $\mathcal{H}$ equipped with the inner product $\langle \cdot, \cdot \rangle$ and the operator norm $\Vert \cdot \Vert$. Let $S(\mathcal{H}) \subset B(\mathcal{H})$ be the closed subspace of all self-adjoint operators, and let $\mathbb{P} \subseteq S(\mathcal{H})$ be the open convex cone of all positive definite operators. Note that $A \in \mathbb{P}$ means that $\langle x, Ax \rangle > 0$ for all nonzero $x \in \mathcal{H}$. For $X, Y \in S(\mathcal{H})$ we denote as $X \leq Y$ if and only if $Y - X$ is positive semi-definite, and as $X < Y$ if and only if $Y - X$ is positive definite: this is known as the Loewner order on $S(\mathcal{H})$. For the case that the dimension of the Hilbert space $\mathcal{H}$ is finite, say $\dim(\mathcal{H}) = n < \infty$, we denote as $\mathbb{P}_{n}$ instead of $\mathbb{P}$.

\subsection{Geometric means}

Pusz and Woronowicz \cite{PW} have first introduced a geometric mean $A \# B$ of positive definite Hermitian matrices $A$ and $B$:
\begin{displaymath}
A \# B = A^{1/2} (A^{-1/2} B A^{-1/2})^{1/2} A^{1/2}.
\end{displaymath}
Since then Kubo and Ando \cite{KA} have established the geometric mean of positive definite operators in $\mathbb{P}$ with several remarkable properties. It coincides with the unique positive definite solution $X \in \mathbb{P}$ of the Riccati equation $X A^{-1} X = B$, and moreover, from \cite{KA}
\begin{equation} \label{E:geomean}
A \# B = \max \left\{ X \in S(\mathcal{H}) :
\left(
  \begin{array}{cc}
    A & X \\
    X & B \\
  \end{array}
\right) \geq 0 \right\}.
\end{equation}
Note that \eqref{E:geomean} is proved by the Schur complement, the Loewner-Heinz inequality in Lemma \ref{L:Loewner-Heinz} and the order preserving of congruence transformation.

The Thompson metric on $\mathbb{P}$ is defined by
$d_{T}(A,B) = \Vert \log(A^{-1/2} B A^{-1/2}) \Vert$, where $\Vert X \Vert$ denotes the operator norm of $X$. It is known that $d_{T}$ is a complete metric on $\mathbb{P}$ and that
\begin{displaymath}
d_{T}(A,B) = \max \{ \log M(B/A), \log M(A/B) \},
\end{displaymath}
where $M(B/A) = \inf \{\alpha > 0: B \leq \alpha A\}$ is the same as the largest eigenvalue of $A^{-1/2} B A^{-1/2}$: see \cite{Th}.
The geometric mean curve
\begin{equation} \label{E:geodesic}
\displaystyle [0,1] \ni t \mapsto A \#_{t} B := A^{1/2} (A^{-1/2} B A^{-1/2})^{t} A^{1/2}.
\end{equation}
is a minimal geodesic from $A$ to $B$ for the Thompson metric. It can be extended to the curve $t \mapsto A \#_{t} B$ for all real numbers $t$. We call $A \#_{t} B$ for $t \in [0,1]$ the \emph{weighted geometric mean} of $A$ and $B$ on $\mathbb{P}$, and especially, its metric midpoint $A \# B := A \#_{1/2} B$.

\begin{theorem} \label{T:bijection}
Let $A, B \in \mathbb{P}$.
\begin{itemize}
\item[(1)] The map $X \mapsto A \#_{t} X$ is a bijection on $\mathbb{P}$ for any $t \in (0,1]$.
\item[(2)] The map $X \mapsto X \#_{t} B$ is a bijection on $\mathbb{P}$ for any $t \in [0,1)$.
\end{itemize}
\end{theorem}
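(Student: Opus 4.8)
The plan is to realize the map $X \mapsto A \#_{t} X$ as a composition of three elementary bijections of $\mathbb{P}$ and thereby reduce the statement to the invertibility of the power operation. Denote by $\Gamma_{A}(Y) = A^{1/2} Y A^{1/2}$ the congruence transformation by $A^{1/2}$, and by $p_{t}(Y) = Y^{t}$ the $t$-th power map defined through the continuous functional calculus. Then the defining formula \eqref{E:geodesic} reads
\[
A \#_{t} X = \Gamma_{A}\bigl( p_{t}(\Gamma_{A}^{-1}(X)) \bigr) = (\Gamma_{A} \circ p_{t} \circ \Gamma_{A}^{-1})(X),
\]
so it suffices to check that each of the three factors is a bijection of $\mathbb{P}$ onto itself.

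The congruence $\Gamma_{A}$ is a bijection on $\mathbb{P}$ because $A^{1/2}$ is invertible: congruence by an invertible operator preserves positive definiteness and is undone by congruence by the inverse, so $\Gamma_{A}^{-1}(Z) = A^{-1/2} Z A^{-1/2}$. For the power map, I would use that every $Y \in \mathbb{P}$ is bounded below, $Y \geq cI$ for some $c > 0$, so that the spectrum of $Y$ lies in a compact subset of $(0,\infty)$; hence for $t > 0$ the operator $Y^{t}$ again belongs to $\mathbb{P}$, and the functional-calculus identity $(Y^{t})^{1/t} = Y$ exhibits $p_{1/t}$ as a two-sided inverse of $p_{t}$. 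Since $t \in (0,1]$ guarantees $t \neq 0$, the map $p_{t}$ is bijective, and composing the three bijections yields (1).

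For (2) I would invoke the symmetry $X \#_{t} B = B \#_{1-t} X$, valid for every $t$; geometrically it records that reversing the orientation of the geodesic \eqref{E:geodesic} turns the parameter $t$ into $1-t$, and it follows immediately from the two defining formulas. Consequently the map $X \mapsto X \#_{t} B$ coincides with $X \mapsto B \#_{1-t} X$, which is a bijection of $\mathbb{P}$ by part (1) applied to the base point $B$ with exponent $1-t$, provided $1-t \in (0,1]$, that is $t \in [0,1)$.

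The routine part is the congruence bookkeeping; the one genuinely operator-theoretic point, and the step I expect to require the most care, is the claim that $p_{t}$ maps $\mathbb{P}$ bijectively onto itself. In finite dimensions this is immediate, but in the infinite-dimensional cone one must use the openness of $\mathbb{P}$ — equivalently that its members are bounded below — to guarantee that $Y^{t}$ stays positive definite and that the inversion $Y \mapsto Y^{1/t}$ lands back inside $\mathbb{P}$ rather than merely producing a positive but non-invertible operator.
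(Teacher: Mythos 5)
Your proof is correct and is essentially the paper's argument in cleaner packaging: the paper proves injectivity by undoing the congruence and applying the $(1/t)$-power map, and proves surjectivity by exhibiting the preimage $X = A \#_{1/t} C$, which is exactly your composite inverse $\Gamma_{A} \circ p_{1/t} \circ \Gamma_{A}^{-1}$ in disguise. Part (2) via the symmetry $X \#_{t} B = B \#_{1-t} X$ is identical in both.
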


\begin{proof}
We first show (1). Then (2) holds easily from (1), since $X \#_{t} B = B \#_{1-t} X$. Let $t \in (0,1]$.
\begin{itemize}
\item[(i)] For $X, Y \in \mathbb{P}$, assume that $A \#_{t} X = A \#_{t} Y$. Then $(A^{-1/2} X A^{-1/2})^{t} = (A^{-1/2} Y A^{-1/2})^{t}$ by congruence transformation. So $A^{-1/2} X A^{-1/2} = A^{-1/2} Y A^{-1/2}$ by taking the $(1/t)$-power map. Thus, $X = Y$, that is, the map $X \mapsto A \#_{t} X$ is injective.

\item[(ii)] For any $C \in \mathbb{P}$, set $X = A \#_{1/t} C$. Then $X \in \mathbb{P}$ and $A \#_{t} X = A \#_{t} (A \#_{1/t} C) = C$. So the map $X \mapsto A \#_{t} X$ is surjective.
\end{itemize}
\end{proof}

\begin{remark}
On the open convex cone $\mathbb{P}_{n}$ of positive definite Hermitian matrices, first Moakher \cite{Mo} and then Bhatia and Holbrook \cite{BH} suggested the multi-variable geometric mean by taking the mean to be the unique minimizer of the weighted sum of squares of Riemannian trace distances to each variable $A_{1}, \dots, A_{m}$:
\begin{displaymath}
\Lambda(\omega; A_{1}, \dots, A_{m}) = \underset{X \in \mathbb{P}_{n}}{\arg \min} \sum_{i=1}^{m} w_{i} \delta^{2} (X, A_{i})
\end{displaymath}
where $\delta(A, B) = \Vert \log A^{-1/2} B A^{-1/2} \Vert_{2}$ is the Riemannian trace distance and $\omega = (w_{1}, \dots, w_{m})$ is the positive probability vector in $\mathbb{R}^{m}$. This idea in a Riemannian manifold has been anticipated by \'{E}lie Cartan, and later by Karcher \cite{Kar} $\Lambda(\omega; A_{1}, \dots, A_{m})$ coincides with a unique solution $X \in \mathbb{P}_{n}$ of the non-linear equation, named the Karcher equation,
\begin{equation} \label{E:Karcher}
\sum_{i=1}^{m} w_{i} \log (X^{1/2} A_{i}^{-1} X^{1/2}) = 0.
\end{equation}
We call $\Lambda(\omega; A_{1}, \dots, A_{m})$ the Cartan mean, Riemannian mean or Karcher mean.

This theory does not readily carry over to the setting $\mathbb{P}$ of positive definite operators on a Hilbert space, because there is neither such Riemannian structure nor non-positive curvature metric. Nevertheless, it has been shown in \cite{LL14} that the Karcher equation \eqref{E:Karcher} has a unique solution in the infinite-dimensional setting $\mathbb{P}$, so the unique solution can be defined as the Karcher mean $\Lambda(\omega; A_{1}, \dots, A_{m})$. Since $\Lambda(1-t, t; A, B) = A \#_{t} B$ for any $t \in [0,1]$ we have that $A \#_{t} B$ is the unique solution $X \in \mathbb{P}$ of the equation
\begin{displaymath}
(1-t) \log (X^{1/2} A^{-1} X^{1/2}) + t \log (X^{1/2} B^{-1} X^{1/2}) = 0.
\end{displaymath}
\end{remark}

\subsection{Spectral geometric means}

Fiedler and Pt\'{a}k have introduced and studied in \cite{FP} a new geometric mean of two positive definite matrices, called the \emph{spectral geometric mean}, which possesses analogous properties of the geometric mean. The spectral geometric mean of $A$ and $B$, denoted by $A \natural B$, is defined by
\begin{displaymath}
A \natural B := (A^{-1} \# B)^{1/2} A (A^{-1} \# B)^{1/2}.
\end{displaymath}
One of the most important properties is that $(A \natural B)^{2}$ is positively similar to $A B$, and hence, the eigenvalues of $A \natural B$ are the same as the positive square roots of the eigenvalues of $A B$.

H. Lee and Y. Lim \cite{LeeL} have extended the theory of spectral geometric means of positive definite matrices to symmetric cones, and also provided a weighted version of spectral geometric mean with interesting properties. We in this section consider the spectral geometric mean of positive definite operators with its properties. For any $t \in [0,1]$ the \emph{weighted spectral geometric mean} of $A$ and $B$ in $\mathbb{P}$ is defined as a differentiable curve
\begin{equation} \label{E:sp-geomean}
t \in [0,1] \mapsto A \natural_{t} B := (A^{-1} \# B)^{t} A (A^{-1} \# B)^{t}.
\end{equation}
We can easily see that $A \natural_{0} B = A, \ A \natural_{1} B = B$, and $A \natural B = A \natural_{1/2} B$.
We list some properties of the weighted spectral geometric mean (see \cite{LeeL}).

\begin{proposition} \label{P:SG}
Let $A, B \in \mathbb{P}$ and $t \in [0,1]$. Then $A \natural_{t} B$ is a unique positive definite solution $X \in \mathbb{P}$ of the equation
\begin{eqnarray*}
(A^{-1} \# B)^{t} = A^{-1} \# X.
\end{eqnarray*}
\end{proposition}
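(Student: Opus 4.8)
The plan is to read the defining equation through the Riccati characterization of the geometric mean. Recall from the discussion of the geometric mean in Section 2.1 that for $P, Q \in \mathbb{P}$ the mean $P \# Q$ is the unique positive definite solution $Y \in \mathbb{P}$ of the Riccati equation $Y P^{-1} Y = Q$. Taking $P = A^{-1}$ and $Q = X$, and using $P^{-1} = A$, this says that $A^{-1} \# X$ is the unique $Y \in \mathbb{P}$ satisfying $Y A Y = X$. I would set $M := A^{-1} \# B \in \mathbb{P}$, so that $A \natural_{t} B = M^{t} A M^{t}$ with $M^{t} \in \mathbb{P}$ for every $t \in [0,1]$, and then solve the equation $M^{t} = A^{-1} \# X$ for $X$.

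For existence, I would simply verify that $X = A \natural_{t} B = M^{t} A M^{t}$ works: the operator $Y = M^{t}$ satisfies $Y A Y = M^{t} A M^{t} = X$, so by the uniqueness in the Riccati characterization we get $A^{-1} \# X = M^{t} = (A^{-1} \# B)^{t}$, which is exactly the asserted equation.

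For uniqueness, suppose $X \in \mathbb{P}$ is any solution, i.e. $A^{-1} \# X = (A^{-1} \# B)^{t} = M^{t}$. Applying the Riccati identity $(A^{-1} \# X) A (A^{-1} \# X) = X$ and substituting $A^{-1} \# X = M^{t}$ forces $X = M^{t} A M^{t} = A \natural_{t} B$. Alternatively, uniqueness is immediate from Theorem \ref{T:bijection}(1), since the map $X \mapsto A^{-1} \# X = A^{-1} \#_{1/2} X$ is a bijection on $\mathbb{P}$, so at most one $X$ can satisfy $A^{-1} \# X = M^{t}$.

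There is no serious obstacle here; the entire content is recognizing that the equation $(A^{-1} \# B)^{t} = A^{-1} \# X$ is nothing but the Riccati relation with $A^{-1} \# X$ playing the role of the unknown ``square root''. The only points requiring a little care are matching the Riccati form correctly, so that the inverse $P^{-1}$ becomes $A$, and noting that $M^{t}$ lies in $\mathbb{P}$, which is what keeps both the geometric mean $A^{-1} \# X$ and its Riccati solution legitimately defined.
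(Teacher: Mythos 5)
Your proof is correct. Note that the paper itself states Proposition \ref{P:SG} without proof, simply citing \cite{LeeL}, so there is no in-paper argument to compare against; your Riccati-based verification is the natural one and fills that gap. Both halves are sound: existence follows because $M^{t} = (A^{-1}\# B)^{t} \in \mathbb{P}$ satisfies $M^{t} A M^{t} = A \natural_{t} B$, so the uniqueness clause of the Riccati characterization identifies $A^{-1}\#(A\natural_{t}B)$ with $M^{t}$; and uniqueness of $X$ follows either by substituting into the identity $(A^{-1}\# X)A(A^{-1}\# X) = X$ or, as you also observe, from the injectivity of $X \mapsto A^{-1}\# X$ given by Theorem \ref{T:bijection}(1). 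Your care in matching the Riccati form (so that $P^{-1} = A$) and in noting $M^{t} \in \mathbb{P}$ is exactly where the only pitfalls lie.
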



\begin{lemma} \label{L:SG}
Let $A, B\in \mathbb{P}$ and $s, t, u \in [0,1]$.
\begin{itemize}
\item[(1)] $A \natural_{t} B = A^{1-t} B^{t}$ if $A$ and $B$ commute.
\item[(2)] $(a A) \natural_{t} (b B) = a^{1-t} b^{t} (A \natural_{t} B)$ for any $a, b > 0$.
\item[(3)] $U^{*} (A \natural_{t} B) U = (U^{*} A U) \natural_{t} (U^{*} B U)$ for any unitary operator $U$.
\item[(4)] $A \natural_{t} B = B \natural_{1-t} A$.
\item[(5)] $(A \natural_{s} B) \natural_{t} (A \natural_{u} B) = A \natural_{(1-t)s + t u} B$.
\item[(6)] $(A \natural_{t} B)^{-1} = A^{-1} \natural_{t} B^{-1}$.
\end{itemize}
\end{lemma}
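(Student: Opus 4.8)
The plan is to reduce every item to the explicit form $A \natural_{t} B = M^{t} A M^{t}$, where I abbreviate $M := A^{-1} \# B$, and to feed in the standard properties of the Kubo--Ando geometric mean: positive homogeneity $(aA)\#(bB) = \sqrt{ab}\,(A\#B)$, congruence invariance $(GXG^{*})\#(GYG^{*}) = G(X\#Y)G^{*}$ for invertible $G$, the inversion rule $(X\#Y)^{-1} = X^{-1}\#Y^{-1}$, symmetry $X\#Y = Y\#X$, and the Riccati characterization that $A^{-1}\#C$ is the unique positive definite solution $X$ of $XAX = C$. With this dictionary, items (1), (2), (3), (6) are immediate. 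For (1), commutativity gives $M = A^{-1/2}B^{1/2}$, so $M^{t} A M^{t} = A^{1-t}B^{t}$. For (2), homogeneity yields $(aA)^{-1}\#(bB) = (b/a)^{1/2}M$, and substituting into the definition produces the scalar $a^{1-t}b^{t}$. For (3), unitary conjugation commutes with the functional calculus and with $\#$, so the claim follows termwise. For (6), the inversion rule gives $(A^{-1})^{-1}\#B^{-1} = (A^{-1}\#B)^{-1} = M^{-1}$, whence $A^{-1}\natural_{t} B^{-1} = M^{-t}A^{-1}M^{-t} = (M^{t} A M^{t})^{-1}$.

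Item (4) is the first that genuinely needs the Riccati equation. Since $M = A^{-1}\#B$ solves $XAX = B$, I record the identity $MAM = B$. On the other side, $B^{-1}\#A = (B\#A^{-1})^{-1} = (A^{-1}\#B)^{-1} = M^{-1}$ by symmetry and inversion, so $B\natural_{1-t}A = M^{t-1}BM^{t-1}$. The desired equality $M^{t} A M^{t} = M^{t-1}BM^{t-1}$ then reduces, after conjugating both sides by $M^{1-t}$, exactly to $MAM = B$.

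Item (5) is where the real work lies, and which I expect to be the main obstacle. Writing $P := A\natural_{s} B = M^{s} A M^{s}$ and $Q := A\natural_{u} B = M^{u} A M^{u}$, the task is to evaluate $P\natural_{t} Q$, and the crux is the geometric mean $P^{-1}\#Q$. I would factor $P^{-1} = M^{-s}A^{-1}M^{-s}$ and $Q = M^{-s}(M^{s+u}AM^{s+u})M^{-s}$ and apply congruence invariance with $G = M^{-s}$ to get
\[
P^{-1}\#Q = M^{-s}\bigl(A^{-1}\#(M^{s+u}AM^{s+u})\bigr)M^{-s}.
\]
The key observation is that $X = M^{s+u}$ is visibly a positive definite solution of the Riccati equation $XAX = M^{s+u}AM^{s+u}$, so by uniqueness $A^{-1}\#(M^{s+u}AM^{s+u}) = M^{s+u}$. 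Since all remaining factors are powers of $M$ and hence commute, this collapses to $P^{-1}\#Q = M^{u-s}$, giving $(P^{-1}\#Q)^{t} = M^{t(u-s)}$. Substituting into the definition of $P\natural_{t} Q$ and collecting the exponent $t(u-s) + s = (1-t)s + tu$ yields the claim. The one point demanding care is the legitimacy of the congruence step together with the uniqueness invoked for the Riccati equation, both of which hold since $M^{-s}$ is invertible and $M^{s+u}$ is positive definite.
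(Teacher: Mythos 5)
Your proof is correct, and it is worth noting that the paper itself gives no proof at all: it simply lists these properties with a pointer to Lee--Lim \cite{LeeL}, where the corresponding statements are established in the finite-dimensional setting of symmetric cones via Jordan-algebraic machinery (quadratic representations). Your argument is self-contained and uses only facts the paper has already recorded for operators --- the Riccati characterization ($A^{-1}\#C$ is the unique positive definite solution of $XAX=C$), congruence invariance, homogeneity, symmetry, and the inversion rule of $\#$ --- so it works verbatim in the infinite-dimensional cone $\mathbb{P}$, which is the setting in which the lemma is actually stated. Each step checks out: (1), (2), (3), (6) are indeed routine substitutions into $A\natural_t B = M^tAM^t$ with $M=A^{-1}\#B$; in (4) the reduction to $MAM=B$ after conjugating by $M^{1-t}$ is exactly right; and in (5) the congruence step with $G=M^{-s}$ (legitimate since $M^{-s}$ is positive definite, hence $G^{*}=G$) combined with the uniqueness in the Riccati equation gives $A^{-1}\#\bigl(M^{s+u}AM^{s+u}\bigr)=M^{s+u}$, hence $P^{-1}\#Q=M^{u-s}$ and the exponent $t(u-s)+s=(1-t)s+tu$. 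Note that your key identity $A^{-1}\#(M^{r}AM^{r})=M^{r}$ is precisely the content of the paper's Proposition \ref{P:SG} (that $A\natural_r B$ is the unique $X$ with $A^{-1}\#X=(A^{-1}\#B)^{r}$), so your route in (5) effectively re-derives that proposition rather than quoting it; either way the logic is sound, and your version has the merit of making the dependence on the Riccati uniqueness explicit.
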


\begin{theorem}
Let $A, B \in \mathbb{P}$.
\begin{itemize}
\item[(1)] The map $X \mapsto A \natural_{t} X$ is a bijection on $\mathbb{P}$ for any $t \in (0,1]$.
\item[(2)] The map $X \mapsto X \natural_{t} B$ is a bijection on $\mathbb{P}$ for any $t \in [0,1)$.
\end{itemize}
\end{theorem}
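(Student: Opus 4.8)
The plan is to imitate the proof of Theorem~\ref{T:bijection}, transporting the bijectivity of the geometric-mean map $X \mapsto A^{-1} \# X$ through the characterization in Proposition~\ref{P:SG}. First I would derive (2) from (1): by Lemma~\ref{L:SG}(4) the map $X \mapsto X \natural_{t} B$ equals $X \mapsto B \natural_{1-t} X$, and since $t \in [0,1)$ is equivalent to $1-t \in (0,1]$, part (1) applied with base point $B$ and parameter $1-t$ gives the claim at once.

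For (1), fix $t \in (0,1]$. To prove injectivity, suppose $A \natural_{t} X = A \natural_{t} Y$. Applying $A^{-1} \# (\cdot)$ to both sides and using Proposition~\ref{P:SG} in the form $A^{-1} \# (A \natural_{t} X) = (A^{-1} \# X)^{t}$, I get $(A^{-1} \# X)^{t} = (A^{-1} \# Y)^{t}$. Taking the $(1/t)$-power map, which is injective on $\mathbb{P}$, yields $A^{-1} \# X = A^{-1} \# Y$, and then the injectivity in Theorem~\ref{T:bijection}(1) (with base point $A^{-1}$ and parameter $1/2$) forces $X = Y$.

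For surjectivity, let $C \in \mathbb{P}$. Since $(A^{-1} \# C)^{1/t}$ is again positive definite, the surjectivity in Theorem~\ref{T:bijection}(1) provides $X \in \mathbb{P}$ with $A^{-1} \# X = (A^{-1} \# C)^{1/t}$. Raising to the $t$-th power gives $(A^{-1} \# X)^{t} = A^{-1} \# C$, so Proposition~\ref{P:SG} identifies $A^{-1} \# (A \natural_{t} X) = A^{-1} \# C$, and one final appeal to injectivity of $X \mapsto A^{-1} \# X$ delivers $A \natural_{t} X = C$. Equivalently, one can exhibit the explicit preimage $X = A \natural_{1/t} C$ and verify $A \natural_{t}(A \natural_{1/t} C) = A \natural_{1} C = C$ via the composition rule $A \natural_{t}(A \natural_{s} C) = A \natural_{ts} C$, which itself follows from Proposition~\ref{P:SG}.

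I do not expect a genuine obstacle: the whole argument is the spectral-mean analogue of Theorem~\ref{T:bijection}, with Proposition~\ref{P:SG} doing the essential work of converting each spectral-mean equation into a geometric-mean equation. The only point needing a word of care is that surjectivity forces the parameter $1/t \geq 1$, so one implicitly uses the extension of the weighted spectral geometric mean to parameters outside $[0,1]$ through the same formula $(A^{-1} \# B)^{s} A (A^{-1} \# B)^{s}$---exactly as the geometric-mean curve was extended to all real $t$.
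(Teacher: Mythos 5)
Your proof is correct, and it follows the paper's overall strategy---reducing bijectivity of $X \mapsto A \natural_{t} X$ to Theorem \ref{T:bijection}---but the mechanics differ in ways worth noting. For injectivity, the paper does not invoke Proposition \ref{P:SG} at all; it re-derives that identity by hand: congruence by $A^{1/2}$ turns $A \natural_{t} X = A \natural_{t} Y$ into an equality of squares of the positive definite operators $A^{1/2}(A^{-1} \# X)^{t} A^{1/2}$ and $A^{1/2}(A^{-1} \# Y)^{t} A^{1/2}$, and uniqueness of positive square roots then gives $(A^{-1} \# X)^{t} = (A^{-1} \# Y)^{t}$; from there the two arguments coincide. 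For surjectivity the routes genuinely diverge: the paper exhibits the explicit preimage $X = A \natural_{1/t} C$ and verifies $A \natural_{t} (A \natural_{1/t} C) = C$ by Lemma \ref{L:SG}(5), which requires evaluating $\natural_{s}$ at $s = 1/t \geq 1$ and using that lemma outside its stated range $s, t, u \in [0,1]$---your closing caveat about extending the spectral-mean curve applies precisely to this route, which you list only as an alternative. Your primary route instead solves $A^{-1} \# X = (A^{-1} \# C)^{1/t}$ using the surjectivity half of Theorem \ref{T:bijection}(1) and converts back through Proposition \ref{P:SG} plus injectivity of $Z \mapsto A^{-1} \# Z$, so the spectral geometric mean is never needed at parameters outside $[0,1]$. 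What each approach buys: the paper's argument produces an explicit formula for the inverse map, $C \mapsto A \natural_{1/t} C$, while yours is slightly more self-contained in that it leans only on statements the paper proves for parameters within their stated ranges.
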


\begin{proof}
We first show (1). Then (2) holds easily from (1), since $X \natural_{t} B = B \natural_{1-t} X$ by Lemma \ref{L:SG} (4). Let $t \in (0,1]$.
\begin{itemize}
\item[(i)] For $X, Y \in \mathbb{P}$, assume that $A \natural_{t} X = A \natural_{t} Y$. Taking congruence transformation by $A^{1/2}$ yields
\begin{displaymath}
[ A^{1/2} (A^{-1} \# X)^{t} A^{1/2} ]^{2} = A^{1/2} (A \natural_{t} X) A^{1/2} = A^{1/2} (A \natural_{t} Y) A^{1/2} = [ A^{1/2} (A^{-1} \# Y)^{t} A^{1/2} ]^{2}.
\end{displaymath}
Then $A^{1/2} (A^{-1} \# X)^{t} A^{1/2} = A^{1/2} (A^{-1} \# Y)^{t} A^{1/2}$, so $A^{-1} \# X = A^{-1} \# Y$. By Theorem \ref{T:bijection} (1) $X = Y$, that is, the map $X \mapsto A \natural_{t} X$ is injective.

\item[(ii)] For any $C \in \mathbb{P}$, set $X = A \natural_{1/t} C$. Then $X \in \mathbb{P}$ and $A \natural_{t} X = A \natural_{t} (A \natural_{1/t} C) = C$ by Lemma \ref{L:SG} (5). So the map $X \mapsto A \#_{t} X$ is surjective.
\end{itemize}
\end{proof}

Unfortunately, the spectral geometric mean does not satisfy the monotonicity and arithmetic-geometric-harmonic mean inequalities with respect to the Loewner order on $S(\mathcal{H})$.
In \cite[Proposition 2.3]{KL}, on the other hand, an upper bound and a lower bound of the weighted spectral geometric mean are provided.
\begin{proposition} \label{P:bounds}
Let $A, B \in \mathbb{P}$ and $t \in [0,1]$. Then
\begin{displaymath}
\displaystyle 2^{1 + t} (A + B^{-1})^{-t} - A^{-1} \leq A \natural_{t} B \leq [ 2^{1 + t} (A^{-1} + B)^{-t} - A ]^{-1}.
\end{displaymath}
\end{proposition}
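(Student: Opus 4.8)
The plan is to prove the lower bound directly and then deduce the upper bound from it by the inversion symmetry of Lemma \ref{L:SG}(6). First I would rewrite the lower bound in the additive form
\begin{displaymath}
A \natural_{t} B + A^{-1} \geq 2^{1+t}(A + B^{-1})^{-t},
\end{displaymath}
and read the right-hand side through the harmonic mean: since $2(A + B^{-1})^{-1}$ is exactly the harmonic mean of $A^{-1}$ and $B$, and since $2^{t}(A+B^{-1})^{-t} = (2(A+B^{-1})^{-1})^{t}$, the target becomes a statement comparing a $t$-th power of this harmonic mean with $A \natural_{t} B + A^{-1}$.

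The key step is to peel off the summand $A^{-1}$ by the arithmetic-geometric mean inequality $X + Y \geq 2(X \# Y)$, applied with $X = A \natural_{t} B$ and $Y = A^{-1}$, which gives
\begin{displaymath}
A \natural_{t} B + A^{-1} \geq 2[(A \natural_{t} B) \# A^{-1}].
\end{displaymath}
Now the middle term collapses: by symmetry of the geometric mean together with Proposition \ref{P:SG} one has $(A \natural_{t} B)\# A^{-1} = A^{-1}\#(A\natural_{t} B) = (A^{-1}\#B)^{t}$. It remains to compare with the harmonic mean: the harmonic-geometric mean inequality yields $2(A+B^{-1})^{-1}\leq A^{-1}\#B$, and raising to the $t$-th power via the Loewner-Heinz inequality of Lemma \ref{L:Loewner-Heinz} (legitimate because $t\in[0,1]$) gives $2^{t}(A+B^{-1})^{-t}\leq (A^{-1}\#B)^{t}$. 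Chaining these three facts produces $A\natural_{t} B + A^{-1}\geq 2\cdot 2^{t}(A+B^{-1})^{-t} = 2^{1+t}(A+B^{-1})^{-t}$, which is the lower bound.

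For the upper bound I would apply the lower bound just proved to the pair $(A^{-1}, B^{-1})$ in place of $(A,B)$, obtaining $2^{1+t}(A^{-1}+B)^{-t} - A \leq A^{-1}\natural_{t} B^{-1}$, and then invoke Lemma \ref{L:SG}(6) to rewrite $A^{-1}\natural_{t} B^{-1} = (A\natural_{t} B)^{-1}$; inverting both sides then reverses the Loewner order and delivers the stated upper bound. The point demanding care, and the main obstacle, is precisely this final order-reversal: passing to inverses reverses the order only between positive definite operators, so one must justify the inversion by noting that $2^{1+t}(A^{-1}+B)^{-t} - A$ is dominated by the positive definite operator $(A\natural_{t} B)^{-1}$, and the bound is meaningful on the regime where this bracket is itself positive definite. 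Everything else reduces to the routine check that each inequality in the chain is between comparable positive definite operators.
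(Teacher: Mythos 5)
The paper itself gives no proof of Proposition \ref{P:bounds}: it is quoted directly from \cite[Proposition 2.3]{KL}, so there is no in-paper argument to compare yours against. Your proof of the lower bound is correct and complete: the operator AM--GM inequality $X + Y \geq 2 (X \# Y)$ with $X = A \natural_{t} B$, $Y = A^{-1}$, the identity $A^{-1} \# (A \natural_{t} B) = (A^{-1} \# B)^{t}$ supplied by Proposition \ref{P:SG}, the HM--GM inequality $2(A + B^{-1})^{-1} \leq A^{-1} \# B$, and the Loewner--Heinz inequality for $t \in [0,1]$ chain together exactly as you describe, and this is surely the intended route.

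The upper bound, however, contains a genuine gap, and it sits exactly where you placed your warning. What your duality argument actually proves is the additive inequality
\begin{displaymath}
(A \natural_{t} B)^{-1} \geq 2^{1+t} (A^{-1} + B)^{-t} - A .
\end{displaymath}
To pass to the stated upper bound you must invert both sides, and Lemma \ref{L:order} (2) requires \emph{both} sides to be positive definite. Your proposed justification --- that the bracket is ``dominated by the positive definite operator $(A \natural_{t} B)^{-1}$'' --- is backwards: an operator lying below a positive definite operator need not itself be positive. And the bracket genuinely can fail to be positive: already for scalars $A = 10$, $B = 1$, $t = 1/2$ one computes $2^{3/2}(1.1)^{-1/2} - 10 \approx -7.30 < 0$, while $A \natural_{1/2} B = \sqrt{10} > 0$, so the stated inequality $\sqrt{10} \leq (-7.30)^{-1}$ fails outright in the Loewner order. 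Hence the inversion step cannot be repaired in general: the upper bound is valid only under the additional hypothesis $2^{1+t}(A^{-1}+B)^{-t} > A$ (in which case your argument goes through), or it must be left in the additive form displayed above. This defect is inherited from the statement as quoted; your proposal correctly isolates the obstruction but does not, and indeed cannot, close it as stated.
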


No metric property for the spectral geometric mean of positive definite operators and matrices is known. We show that the spectral geometric mean is a midpoint with respect to the following semi-metric on $\mathbb{P}$.
\begin{lemma} \label{L:semimetric}
The map $d: \mathbb{P} \times \mathbb{P} \to [0, \infty)$ given by
\begin{displaymath}
\displaystyle d(A, B) = 2 \Vert \log (A^{-1} \# B) \Vert
\end{displaymath}
is a semi-metric, where $\Vert \cdot \Vert$ denotes the operator norm.
\end{lemma}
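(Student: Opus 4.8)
The plan is to check the three defining conditions of a semi-metric---non-negativity, the identity of indiscernibles, and symmetry---while making no attempt at the triangle inequality (a semi-metric is not required to satisfy it, which is exactly why only this weaker notion is claimed). Non-negativity is immediate: $A^{-1} \# B \in \mathbb{P}$, so $\log(A^{-1}\#B)$ is a well-defined bounded self-adjoint operator, and $d(A,B)$ is twice its operator norm, hence lands in $[0,\infty)$.

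For symmetry I would use two standard features of the geometric mean, both read off from the Riccati characterization recalled earlier: commutativity $A \# B = B \# A$, and the inversion law $(A\#B)^{-1} = A^{-1}\#B^{-1}$. The latter follows by inverting the Riccati equation: if $Z = A\#B$ satisfies $Z A^{-1} Z = B$, then $Z^{-1} A Z^{-1} = B^{-1}$, which is precisely the equation defining $A^{-1}\#B^{-1}$. Applying these to $A^{-1}\#B$ gives $(A^{-1}\#B)^{-1} = A \# B^{-1} = B^{-1}\#A$, whence $\log(B^{-1}\#A) = -\log(A^{-1}\#B)$. Since the operator norm is invariant under negation, $d(B,A) = d(A,B)$.

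For the identity of indiscernibles, observe that $d(A,B)=0$ if and only if $\log(A^{-1}\#B)=0$, i.e.\ $A^{-1}\#B = I$. Reading the Riccati characterization once more, $X = A^{-1}\#B$ is the unique solution of $X A X = B$ (the geometric mean of $A^{-1}$ and $B$), so $X = I$ forces $B = A$; conversely a one-line computation gives $A^{-1}\#A = I$. Thus $d(A,B) = 0 \iff A = B$, and in particular $d(A,A)=0$.

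None of these steps is computationally heavy, so I do not expect a serious obstacle; the only point needing care is the symmetry argument, which rests on recognizing the inversion law for $\#$ and on the elementary fact that $\Vert X \Vert = \Vert -X \Vert$ for self-adjoint $X$. Everything else is a direct consequence of the Riccati equation and the positivity of the geometric mean.
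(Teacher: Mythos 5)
Your proof is correct and follows essentially the same route as the paper: both verify non-negativity, definiteness, and symmetry, with symmetry obtained from the inversion law $(A^{-1}\#B)^{-1}=A\#B^{-1}$, commutativity of $\#$, and $\Vert -X \Vert = \Vert X \Vert$. The only cosmetic difference is that you deduce $A^{-1}\#B=I \Rightarrow A=B$ from uniqueness in the Riccati equation, whereas the paper squares the explicit formula $(A^{1/2}BA^{1/2})^{1/2}=A$; both are immediate.
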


\begin{proof}
We need to show that the map $d: \mathbb{P} \times \mathbb{P} \to [0, \infty)$ satisfies all axioms of metric except the triangle inequality.
\begin{itemize}
\item[(1)] Obviously $d(A, B) \geq 0$ for all $A, B \in \mathbb{P}$.

\item[(2)] If $A = B$ then $A^{-1} \# B = I$, and $\log (A^{-1} \# B) = O$. So $d(A, B) = 0$. Conversely, $d(A, B) = 0$ implies that $\log (A^{-1} \# B) = O$, that is, $A^{-1} \# B = I$. Then $(A^{1/2} B A^{1/2})^{1/2} = A$, and $A^{1/2} B A^{1/2} = A^{2}$. Thus, $B = A$.

\item[(3)] By the invariance under inversion and symmetry of the geometric mean
\begin{displaymath}
\begin{split}
d(A, B) & = 2 \Vert \log (A^{-1} \# B) \Vert = 2 \Vert - \log (A^{-1} \# B) \Vert = 2 \Vert \log (A^{-1} \# B)^{-1} \Vert \\
& = 2 \Vert \log (A \# B^{-1}) \Vert = 2 \Vert \log (B^{-1} \# A) \Vert = d(B, A).
\end{split}
\end{displaymath}
\end{itemize}
\end{proof}

We see several interesting properties of the semi-metric $d$.
\begin{proposition} \label{P:semimetric}
Let $A, B \in \mathbb{P}$.
\begin{itemize}
\item[(1)] $d(\alpha A, \alpha B) = d(A, B)$ for any $\alpha > 0$.
\item[(2)] $d(A^{-1}, B^{-1}) = d(A, B)$.
\item[(3)] $d(U A U^{*}, U B U^{*}) = d(A, B)$ for any unitary operator $U$.
\item[(4)] $d(A^{t}, B^{t}) = |t| d(A, B)$ for any commuting operators $A, B$.
\end{itemize}
\end{proposition}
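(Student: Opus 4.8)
The plan is to treat all four identities uniformly: in each case I would simplify the argument $A^{-1} \# B$ appearing inside $d$ by means of the standard invariance properties of the geometric mean $\#$ (joint homogeneity, invariance under inversion, and unitary congruence invariance, all recorded for $\#$ by Kubo and Ando \cite{KA}), and then transfer the simplification through the operator norm using the elementary identities $\|-X\| = \|X\|$ and $\|UXU^*\| = \|X\|$ together with the fact that $\log$ commutes with unitary conjugation, i.e. $\log(UXU^*) = U(\log X)U^*$.

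For part (1), I would write $(\alpha A)^{-1} \# (\alpha B) = (\alpha^{-1} A^{-1}) \# (\alpha B)$ and apply the joint homogeneity $(sX) \# (rY) = \sqrt{sr}\,(X \# Y)$ with $s = \alpha^{-1}$ and $r = \alpha$; the scalar collapses to $\sqrt{\alpha^{-1}\alpha} = 1$, so the argument of the logarithm is untouched. For part (2), the key is the inversion invariance $(A^{-1} \# B)^{-1} = A \# B^{-1}$, which is exactly the combination already used in Lemma \ref{L:semimetric}(3); it gives $\log((A^{-1})^{-1} \# B^{-1}) = \log(A \# B^{-1}) = -\log(A^{-1} \# B)$, and then $\|-X\| = \|X\|$ finishes it. For part (3), since $U$ is unitary one has $(UAU^*)^{-1} = U A^{-1} U^*$, so unitary congruence invariance yields $(UAU^*)^{-1} \# (UBU^*) = U(A^{-1} \# B) U^*$; because $\log$ commutes with unitary conjugation and the operator norm is unitarily invariant, the two norms agree.

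The only part needing a genuine, if modest, computation is part (4), and this is where I expect the (small) difficulty to sit. Here I would use that commuting $A$ and $B$ have commuting real powers, and that in the commuting case the geometric mean collapses to $X \# Y = X^{1/2} Y^{1/2}$. Thus $A^{-t} \# B^t = A^{-t/2} B^{t/2}$ while $A^{-1} \# B = A^{-1/2} B^{1/2}$, and since the factors commute the logarithm is additive, giving $\log(A^{-t} \# B^t) = \tfrac{t}{2}(\log B - \log A) = t \log(A^{-1} \# B)$. Pulling the scalar out of the norm then produces the factor $|t|$, the absolute value absorbing the sign and being valid for all real $t$. The care required is confined to justifying the additivity of $\log$ on the commuting product and to allowing $t$ to range over all reals; no deeper obstacle arises, as the whole proposition reduces to the invariance properties of $\#$ combined with elementary norm and functional-calculus identities.
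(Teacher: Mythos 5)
Your proposal is correct and follows essentially the same route as the paper: parts (1) and (2) via the joint homogeneity and inversion invariance of $\#$ (which the paper invokes without writing out), part (3) via unitary congruence invariance of $\#$, the identity $\log(UXU^{*}) = U(\log X)U^{*}$, and unitary invariance of the operator norm, and part (4) via the collapse $A^{-1} \# B = A^{-1/2}B^{1/2}$ in the commuting case together with additivity of $\log$ on commuting positive definite operators. The only difference is that you spell out the scalar computation $\sqrt{\alpha^{-1}\alpha}=1$ and the additivity step explicitly, which the paper leaves implicit.
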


\begin{proof}
The items (1) and (2) follow from the homogeneity and invariance under inversion of the geometric mean $\#$.
\begin{itemize}
\item[(3)] For any unitary operator $U$, $(U A U^{*})^{-1} \# U B U^{*} = U A^{-1} U^{*} \# U B U^{*} = U (A^{-1} \# B) U^{*}$, so
\begin{displaymath}
\log [ (U A U^{*})^{-1} \# U B U^{*} ] = U \log (A^{-1} \# B) U^{*}.
\end{displaymath}
Since the operator norm is unitarily invariant, $d(U A U^{*}, U B U^{*}) = 2 \Vert \log (A^{-1} \# B) \Vert = d(A, B)$.

\item[(4)] Since $A^{-1} \# B = A^{-1/2} B^{1/2}$ for any commuting $A, B$, we have $d(A, B) = \Vert \log A - \log B \Vert$. So
\begin{displaymath}
d(A^{t}, B^{t}) = \Vert \log A^{t} - \log B^{t} \Vert = |t| \Vert \log A - \log B \Vert = |t| d(A,B).
\end{displaymath}
\end{itemize}
\end{proof}

\begin{theorem} \label{T:semimetric}
The spectral geometric mean $A \natural B$ is a midpoint of $A$ and $B$ with respect to the semi-metric $d$ on $\mathbb{P}$.
\end{theorem}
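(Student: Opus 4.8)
The plan is to verify directly that $M := A \natural B$ satisfies the defining condition of a midpoint for the semi-metric $d$, namely $d(A, M) = d(M, B) = \tfrac{1}{2} d(A, B)$. The whole argument rests on the characterization in Proposition \ref{P:SG}: specializing the equation $(A^{-1} \# B)^{t} = A^{-1} \# X$ to $t = 1/2$ and $X = A \natural B = A \natural_{1/2} B$ shows that $A^{-1} \# (A \natural B) = (A^{-1} \# B)^{1/2}$, which is exactly the quantity appearing inside $d(A, A \natural B)$.

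First I would compute the left half-distance. Using the identity above together with $\log(P^{1/2}) = \tfrac{1}{2} \log P$ for positive definite $P$, I get
\begin{displaymath}
d(A, A \natural B) = 2 \Vert \log (A^{-1} \# (A \natural B)) \Vert = 2 \Vert \log (A^{-1} \# B)^{1/2} \Vert = \Vert \log (A^{-1} \# B) \Vert = \tfrac{1}{2} d(A, B).
\end{displaymath}

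For the right half-distance I would avoid computing $B^{-1} \# (A \natural B)$ head-on, since that expression expands into a cumbersome product, and instead exploit symmetry. By Lemma \ref{L:SG} (4) at $t = 1/2$ we have $A \natural B = B \natural A$, and $d$ is symmetric by Lemma \ref{L:semimetric}, so $d(A \natural B, B) = d(B, B \natural A)$. Applying the computation of the previous paragraph with the roles of $A$ and $B$ interchanged gives $d(B, B \natural A) = \tfrac{1}{2} d(B, A)$, and one more use of the symmetry of $d$ turns this into $\tfrac{1}{2} d(A, B)$.

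Combining the two computations yields $d(A, A \natural B) = d(A \natural B, B) = \tfrac{1}{2} d(A, B)$, which is the assertion. The only genuinely substantive step is recognizing that Proposition \ref{P:SG} delivers the needed half-power identity $A^{-1} \# (A \natural B) = (A^{-1} \# B)^{1/2}$ essentially for free; once that is in hand the computation is short, and the reduction $A \natural B = B \natural A$ together with the symmetry of $d$ disposes of the second half without any further operator algebra. I expect no real obstacle beyond choosing this symmetric route in place of a direct expansion of $B^{-1} \# (A \natural B)$.
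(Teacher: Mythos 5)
Your proposal is correct and follows essentially the same route as the paper: both arguments specialize Proposition \ref{P:SG} to $t=1/2$ to obtain $A^{-1} \# (A \natural B) = (A^{-1} \# B)^{1/2}$, compute $d(A, A\natural B)$ from the half-power identity, and then dispose of $d(B, A\natural B)$ via Lemma \ref{L:SG} (4) together with the symmetry of $d$. There is nothing to add.
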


\begin{proof}
By Proposition \ref{P:SG} for $t = 1/2$ we have $(A^{-1} \# B)^{1/2} = A^{-1} \# (A \natural B)$ for positive definite operators $A$ and $B$. Then
\begin{displaymath}
d(A, A \natural B) = 2 \Vert \log (A^{-1} \# (A \natural B)) \Vert = 2 \Vert \log (A^{-1} \# B)^{1/2} \Vert = \Vert \log (A^{-1} \# B) \Vert = \frac{1}{2} d(A, B).
\end{displaymath}
Similarly, by Lemma \ref{L:SG} (4), the above result and the symmetry of semi-metric $d$
\begin{displaymath}
d(B, A \natural B) = d(B, B \natural A) = \frac{1}{2} d(B, A) = \frac{1}{2} d(A, B).
\end{displaymath}
\end{proof}

\begin{remark}
We give some remarks and an open question for the semi-metric.
\begin{itemize}
\item[(1)] Note that the semi-metric $d$ can be considered as a symmetric divergence. Unfortunately, the semi-metric $d$ does not hold the triangle inequality. For instance, let
\begin{displaymath}
A =
\left(
  \begin{array}{cc}
    5 & 0 \\
    0 & 1/5 \\
  \end{array}
\right), B =
\left(
  \begin{array}{cc}
    2 & -3 \\
    -3 & 5 \\
  \end{array}
\right), C =
\left(
  \begin{array}{cc}
    1 & -2 \\
    -2 & 5 \\
  \end{array}
\right).
\end{displaymath}
Then $A, B, C \in \mathbb{P}_{2}$ with determinant $1$, and by using MATLAB
\begin{displaymath}
d(A, B) = 1.117270 \cdots, \ d(B, C) = 0.173732 \cdots, \ d(A, C) = 1.305274 \cdots.
\end{displaymath}
So $d(A, C) > d(A, B) + d(B, C)$.

\item[(2)] Note that for any $t \in [0,1]$
\begin{center}
$d(A, A \natural_{t} B) = t d(A, B)$ \ and \ $d(B, A \natural_{t} B) = (1-t) d(A, B)$.
\end{center}
Indeed, by Proposition \ref{P:SG}
\begin{displaymath}
d(A, A \natural_{t} B) = 2 \Vert \log (A^{-1} \# (A \natural_{t} B)) \Vert = 2 \Vert \log (A^{-1} \# B)^{t} \Vert = 2t \Vert \log (A^{-1} \# B) \Vert = t d(A, B).
\end{displaymath}
Similarly, $d(B, A \natural_{t} B) = d(B, B \natural_{1-t} A) = (1-t) d(B, A) = (1-t) d(A, B)$ by Lemma \ref{L:SG} (4). It gives an affirmative answer that $A \natural_{t} B$ is a geodesic for the semi-metric $d$, but it is a challengeable problem. That is, for any $s, t \in [0,1]$
\begin{displaymath}
d(A \natural_{s} B, A \natural_{t} B) = |s-t| d(A, B).
\end{displaymath}
\end{itemize}
\end{remark}

\section{Operator inequalities}

In this section we see several operator inequalities of weighted geometric means. The following are crucial to prove the main results.

\begin{lemma} [The Loewner-Heinz inequality] \label{L:Loewner-Heinz}
Let $C \in S(\mathcal{H})$ and let $A, B \geq 0$. If $C^{2} \leq A \leq B$, then $C \leq A^{1/2} \leq B^{1/2}$.
\end{lemma}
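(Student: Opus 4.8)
The plan is to reduce everything to the single core fact that the square root is operator monotone, namely that $0 \leq A \leq B$ implies $A^{1/2} \leq B^{1/2}$. Granting this, the full statement follows quickly. First, $C^{2} \leq A$ with both operators positive semi-definite gives $(C^{2})^{1/2} \leq A^{1/2}$ by monotonicity of the square root. Since $C$ is self-adjoint, spectral calculus shows $C \leq |C| = (C^{2})^{1/2}$ (as $\lambda \leq |\lambda|$ pointwise on the spectrum), so combining yields $C \leq A^{1/2}$. Second, $A \leq B$ gives $A^{1/2} \leq B^{1/2}$ directly. Thus the whole chain $C \leq A^{1/2} \leq B^{1/2}$ collapses to the monotonicity claim.

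To prove monotonicity of the square root I would use the integral representation
\[
A^{1/2} = \frac{1}{\pi} \int_{0}^{\infty} \left[ I - \lambda (A + \lambda I)^{-1} \right] \lambda^{-1/2} \, d\lambda ,
\]
valid for every $A \geq 0$: for $\lambda > 0$ the operator $A + \lambda I \geq \lambda I$ is invertible even when $A$ is singular, and the integrand is integrable at both endpoints (it is bounded as $\lambda \to 0$ against the integrable weight $\lambda^{-1/2}$, and decays like $\lambda^{-3/2}$ as $\lambda \to \infty$). The formula is checked on the spectrum by the scalar identity $\frac{1}{\pi} \int_{0}^{\infty} \frac{t}{t + \lambda} \lambda^{-1/2} \, d\lambda = t^{1/2}$, obtained from the substitution $\lambda = t u^{2}$ and $\int_{0}^{\infty} (1 + u^{2})^{-1} \, du = \pi/2$.

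Now suppose $0 \leq A \leq B$. Then $A + \lambda I \leq B + \lambda I$ with both sides positive definite for $\lambda > 0$, so antitonicity of inversion gives $(B + \lambda I)^{-1} \leq (A + \lambda I)^{-1}$. Multiplying by the negative scalar $-\lambda$ reverses the order again, and adding $I$ yields the pointwise (in $\lambda$) inequality $I - \lambda (A + \lambda I)^{-1} \leq I - \lambda (B + \lambda I)^{-1}$. Integrating against the positive measure $\lambda^{-1/2} \, d\lambda$ preserves the Loewner order, so the representation gives $A^{1/2} \leq B^{1/2}$, as required.

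The one substantive input, and the step I expect to be the main obstacle, is the antitonicity of inversion: $0 < X \leq Y$ implies $Y^{-1} \leq X^{-1}$. I would establish it by congruence invariance of the Loewner order, which the paper already uses elsewhere: from $X \leq Y$ conjugate by $Y^{-1/2}$ to get $Y^{-1/2} X Y^{-1/2} \leq I$; this operator is self-adjoint with spectrum in $(0,1]$, so spectral calculus gives $(Y^{-1/2} X Y^{-1/2})^{-1} \geq I$, i.e. $Y^{1/2} X^{-1} Y^{1/2} \geq I$, and conjugating by $Y^{-1/2}$ returns $X^{-1} \geq Y^{-1}$. Everything else (adding $I$, scaling by a negative constant, and passing the inequality through the integral) is routine, so this antitonicity lemma is the only place genuine operator-theoretic content enters. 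As an alternative to the integral representation, the core monotonicity can also be derived from $\sigma(XY) \setminus \{0\} = \sigma(YX) \setminus \{0\}$ together with $\|T\|^{2} = \|T^{*}T\|$, by reducing $\|B^{-1/4} A^{1/2} B^{-1/4}\| \leq 1$ to $\|A^{1/2} B^{-1/2}\| \leq 1$; I would keep this in reserve in case one prefers to avoid the integral.
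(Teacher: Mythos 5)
Your proof is correct, and it is worth noting at the outset that the paper itself gives no proof of this lemma at all: it simply cites the matrix case from \cite{LL} and asserts that the statement ``also holds for operators.'' So your argument supplies something the paper leaves implicit. Your reduction is the natural one ($C \leq |C| = (C^{2})^{1/2}$ by functional calculus, then apply operator monotonicity of the square root twice), and your proof of the monotonicity itself via the integral representation $A^{1/2} = \frac{1}{\pi}\int_{0}^{\infty}\bigl[I - \lambda(A+\lambda I)^{-1}\bigr]\lambda^{-1/2}\,d\lambda$ is sound: the scalar identity is verified correctly by the substitution $\lambda = tu^{2}$, the integrand is norm-integrable (bounded near $0$, $O(\lambda^{-3/2})$ at infinity), the map $A \mapsto I - \lambda(A+\lambda I)^{-1}$ is order-preserving for each fixed $\lambda>0$ by antitonicity of inversion, and the Loewner order passes through the integral because $X \mapsto \langle x, Xx\rangle$ is linear and positive. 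This is essentially the L\"owner-theoretic proof that $t\mapsto t^{1/2}$ is operator monotone, and its virtue here is that it works verbatim on a Hilbert space, including non-invertible $A$, which is exactly the generality the paper needs. Two small remarks. First, the antitonicity of inversion that you flag as the ``one substantive input'' is precisely Lemma \ref{L:order}(2) of the paper, recorded there as well known; you could have invoked it rather than reproved it, though your congruence-plus-spectral-calculus proof of it is correct (note that in your application $X = A+\lambda I \geq \lambda I$, so invertibility is automatic and the spectrum of $Y^{-1/2}XY^{-1/2}$ is a compact subset of $(0,1]$, hence bounded away from $0$). Second, your ``reserve'' argument via $\sigma(XY)\setminus\{0\} = \sigma(YX)\setminus\{0\}$ and $\Vert T\Vert^{2} = \Vert T^{*}T\Vert$ is the classical short proof in the matrix setting; it needs $B$ invertible, so to cover $B \geq 0$ one would regularize with $B+\epsilon I$ and let $\epsilon \to 0$, a step you should state if you promote it from reserve to main argument.
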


Lemma \ref{L:Loewner-Heinz} has been proved for a Hermitian matrix $C$ and positive semi-definite matrices $A, B$ in \cite{LL}, but it also holds for operators. Moreover, it yields that for positive semi-definite operators $A$ and $B$,
\begin{center}
$A \leq B$ \ implies \ $A^{t} \leq B^{t}$ for any $t \in [0,1]$.
\end{center}
This is also known as the Loewner-Heinz inequality, which is the most used in the fields of operator inequalities and operator means.

The following are well-known results of the Loewner partial order applied to positive operators.

\begin{lemma} \label{L:order}
Let $X, Y \in S(\mathcal{H})$, $S \in B(\mathcal{H})$, and $t \in [0,1]$.
\begin{itemize}
\item[(1)] $0 \leq X \leq Y$ implies $0 \leq S X S^{*} \leq S Y S^{*}$.
\item[(2)] $0 < X \leq Y$ if and only if $0 < Y^{-1} \leq X^{-1}$.
\end{itemize}
\end{lemma}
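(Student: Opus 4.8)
The plan is to treat the two statements separately, using only the definition of the Loewner order together with the spectral (functional) calculus. For part (1), I would first record that congruence by a fixed operator preserves positivity. Concretely, for any $Z \in S(\mathcal{H})$ with $Z \geq 0$ and any $x \in \mathcal{H}$, the adjoint relation $\langle x, S y \rangle = \langle S^{*} x, y \rangle$ gives $\langle x, S Z S^{*} x \rangle = \langle S^{*} x, Z S^{*} x \rangle \geq 0$, so $S Z S^{*} \geq 0$. Applying this with $Z = X$ yields $0 \leq S X S^{*}$, and applying it with $Z = Y - X \geq 0$ yields $0 \leq S(Y - X) S^{*} = S Y S^{*} - S X S^{*}$, which is exactly $S X S^{*} \leq S Y S^{*}$. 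This disposes of (1) entirely.

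For part (2), the key reduction is to bring the comparison to a comparison with the identity, where inversion is transparent. Starting from $0 < X \leq Y$, both $X$ and $Y$ are invertible, so I would conjugate $X \leq Y$ by the positive definite operator $X^{-1/2}$, using part (1), to obtain $I \leq X^{-1/2} Y X^{-1/2} =: T$. Since $T \geq I > 0$, its spectrum lies in $[1, \infty)$, hence by the spectral theorem the spectrum of $T^{-1}$ lies in $(0,1]$, giving $T^{-1} \leq I$. Rewriting $T^{-1} = X^{1/2} Y^{-1} X^{1/2}$ and conjugating once more by $X^{-1/2}$ (again part (1)) produces $Y^{-1} \leq X^{-1}$, with positivity of $Y^{-1}$ automatic from invertibility of $Y$.

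For the converse implication in (2), rather than repeating the argument I would invoke the self-symmetry of the statement under inversion: the forward direction applied to the pair $0 < Y^{-1} \leq X^{-1}$ yields $0 < (X^{-1})^{-1} \leq (Y^{-1})^{-1}$, that is, $0 < X \leq Y$. Thus the equivalence follows with no additional work.

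The only step that is not purely algebraic is the passage $T \geq I \Rightarrow T^{-1} \leq I$, which I expect to be the main (and rather mild) obstacle: it genuinely uses that inversion reverses order on the spectrum, and so relies on the functional calculus rather than on manipulation of inequalities alone. Everything else reduces to congruence invariance, which is handled uniformly by part (1), so the proof is short once that observation is in place.
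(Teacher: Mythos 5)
Your proposal is correct, but note that the paper offers no proof to compare against: Lemma \ref{L:order} is recorded there as a ``well-known result'' of the Loewner order and is stated without argument. What you have written is the standard argument that the paper implicitly relies on: congruence invariance of positivity via $\langle x, SZS^{*}x\rangle = \langle S^{*}x, Z S^{*}x\rangle \geq 0$ for (1), and for (2) the reduction to the identity by conjugating with $X^{-1/2}$, the spectral-calculus step $T \geq I \Rightarrow T^{-1} \leq I$, and the inversion symmetry of the statement to get the converse for free. All of these steps are sound.

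One point deserves a sentence of care in the infinite-dimensional setting. You assert that $0 < X \leq Y$ makes ``both $X$ and $Y$ invertible,'' but positive definiteness in the quadratic-form sense used in the paper ($\langle x, Xx\rangle > 0$ for all nonzero $x$) does \emph{not} imply invertibility on an infinite-dimensional Hilbert space (consider a diagonal operator with eigenvalues $1/n$). The statement of (2) only makes sense when $0 < X$ is read as membership in the open cone $\mathbb{P}$, i.e.\ $X \geq \epsilon I$ for some $\epsilon > 0$; with that reading, $Y \geq X \geq \epsilon I$ is bounded below and self-adjoint, hence invertible, and your use of $X^{-1/2}$ and $Y^{-1}$ is justified. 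Since the paper's lemma writes $X^{-1}$ and $Y^{-1}$ explicitly, this reading is clearly the intended one, so this is a matter of stating the hypothesis precisely rather than a genuine gap in your argument.
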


The following has been proved for positive definite Hermitian matrices in \cite[Corollary 4.4.5]{Bh}, but it is also valid for positive definite operators.
\begin{lemma} \label{L:Furuta}
Let $0 \leq B \leq A$. Then for any $p >0$
\begin{displaymath}
A^{p} \# B^{-p} \geq I.
\end{displaymath}
\end{lemma}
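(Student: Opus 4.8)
The plan is to pass to inverses and prove the equivalent statement $A^{-p} \# B^{p} \le I$ for all $p > 0$. Indeed, by the invariance under inversion $(X \# Y)^{-1} = X^{-1} \# Y^{-1}$ of the geometric mean one has $(A^{p} \# B^{-p})^{-1} = A^{-p} \# B^{p}$, so $A^{p} \# B^{-p} \ge I$ holds if and only if $A^{-p} \# B^{p} \le I$. I would then bound $A^{-p} \# B^{p}$ by comparing it, through monotonicity of the geometric mean, with $A^{-p} \# A^{p} = I$. Here the needed monotonicity of the map $X \mapsto C \# X$ can be read off directly from the maximum characterization \eqref{E:geomean}: if $B \le B'$ then every $X$ making $\left(\begin{smallmatrix} A & X \\ X & B \end{smallmatrix}\right) \ge 0$ also makes $\left(\begin{smallmatrix} A & X \\ X & B' \end{smallmatrix}\right) \ge 0$, so the feasible set grows and the maximum cannot decrease, i.e. $A \# B \le A \# B'$.

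First I would dispose of the range $0 < p \le 1$. Since $0 < B \le A$, the Loewner--Heinz inequality (Lemma \ref{L:Loewner-Heinz}) gives $B^{p} \le A^{p}$ for $p \in [0,1]$, and Lemma \ref{L:order}(2) then yields $A^{-p} \le B^{-p}$. Monotonicity of the second slot now gives $A^{p} \# A^{-p} \le A^{p} \# B^{-p}$, and since $A^{p}$ and $A^{-p}$ commute we have $A^{p} \# A^{-p} = (A^{p} A^{-p})^{1/2} = I$. Hence $A^{p} \# B^{-p} \ge I$ for every $p \in (0,1]$, with no input beyond Loewner--Heinz and order manipulation.

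The hard part will be the range $p > 1$, and I expect it to be the entire difficulty of the lemma. The elementary argument above breaks down precisely because $t \mapsto t^{p}$ is not operator monotone for $p > 1$: the comparison $B^{p} \le A^{p}$ (equivalently $A^{-p} \le B^{-p}$) simply fails, so neither the monotonicity route nor the naive Schur-complement route (which would force the requirement $A^{p} \ge B^{p}$) is available. What survives is the genuinely weaker inequality $A^{-p} \# B^{p} \le I$ itself, in which the operator-monotone square root appearing in the definition $A^{-p}\#B^{p} = A^{-p/2}(A^{p/2}B^{p}A^{p/2})^{1/2}A^{-p/2}$ softens the failure of power monotonicity; this is exactly the content of the Furuta inequality. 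Concretely, I would start from the base case $A^{-1} \# B \le A^{-1} \# A = I$ (valid since $B \le A$, by the monotonicity above) and promote it to all exponents $p \ge 1$ by invoking the Furuta, equivalently the Ando--Hiai, inequality, which guarantees that $A^{-1} \# B \le I$ implies $A^{-p} \# B^{p} \le I$ for every $p \ge 1$. Inverting once more gives $A^{p} \# B^{-p} \ge I$, completing the argument. Thus the single nontrivial ingredient I would need to import is the Furuta/Ando--Hiai inequality; everything else reduces to the inversion identity, Loewner--Heinz, and the order properties of $\#$ recorded above.
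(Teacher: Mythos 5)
Your proposal is correct, but it is genuinely different from what the paper does, because the paper does not prove Lemma \ref{L:Furuta} at all: it is quoted from \cite[Corollary 4.4.5]{Bh}, where it is established for positive definite matrices (the result is a known special case of the Furuta inequality, whence the label), together with the remark that it remains valid for operators. Your argument, by contrast, is a proof, and it stays inside the paper's own toolkit: for $0 < p \leq 1$ you use the Loewner--Heinz inequality (Lemma \ref{L:Loewner-Heinz}) and order reversal under inversion (Lemma \ref{L:order}) to get $A^{-p} \leq B^{-p}$, then the monotonicity of $\#$ in each slot --- which you correctly extract from the extremal characterization \eqref{E:geomean}, noting that enlarging one diagonal block enlarges the feasible set --- to conclude $A^{p} \# B^{-p} \geq A^{p} \# A^{-p} = I$; for $p \geq 1$ you reduce, via the inversion identity $(A^{p} \# B^{-p})^{-1} = A^{-p} \# B^{p}$ and the base case $A^{-1} \# B \leq A^{-1} \# A = I$, to the Ando--Hiai implication $A^{-1} \# B \leq I \Rightarrow A^{-p} \# B^{p} \leq I$. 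Invoking Ando--Hiai is legitimate here: it is an external result that the paper itself imports (from \cite{AH}, and in the operator setting via \cite{Ya} and \cite{LL14}) to prove Theorem \ref{T:main}, it is proved in the literature by log-majorization independently of the Furuta inequality, and nothing downstream of Lemma \ref{L:Furuta} inside the paper feeds back into it, so there is no circularity. The trade-off is clear: the paper's citation is shorter and rests directly on Furuta, of which the lemma is literally an instance; your route makes the lemma self-contained modulo a theorem the paper already uses, and it isolates exactly where the difficulty lies (the failure of operator monotonicity of $t \mapsto t^{p}$ for $p > 1$), at the price of importing a result of essentially the same depth. One minor point: the lemma as stated assumes only $0 \leq B \leq A$, but any proof (yours included, as you implicitly assume by writing $0 < B \leq A$) needs $A, B$ invertible for $B^{-p}$ and the geometric means to make sense.
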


\begin{theorem}
Let $A, B \in \mathbb{P}$ with $A \# B \leq I$. Then
\begin{displaymath}
A^{p+1} \# (A \#_{\frac{p}{2}} B) \leq A
\end{displaymath}
for any $p > 0$. Especially, $A^{3} \# B \leq A$ for $p = 2$.
\end{theorem}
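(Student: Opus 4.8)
The plan is to strip off a congruence by $A^{1/2}$ to reduce the stated inequality to a normalized one, and then feed the hypothesis into Lemma \ref{L:Furuta}. I would set $C = A^{-1/2} B A^{-1/2}$, so that by definition $A \#_{p/2} B = A^{1/2} C^{p/2} A^{1/2}$ and $A \# B = A^{1/2} C^{1/2} A^{1/2}$. Writing $A^{p+1} = A^{1/2} A^{p} A^{1/2}$ and using the congruence invariance of the geometric mean under $S = A^{1/2}$, namely $(S X S^{*}) \# (S Y S^{*}) = S (X \# Y) S^{*}$, I would obtain
\begin{displaymath}
A^{p+1} \# (A \#_{p/2} B) = A^{1/2} (A^{p} \# C^{p/2}) A^{1/2}.
\end{displaymath}
Since $A = A^{1/2} \cdot I \cdot A^{1/2}$, the order-preserving property of congruence in Lemma \ref{L:order} (1) shows that the target inequality is equivalent to the normalized statement $A^{p} \# C^{p/2} \leq I$.

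Next I would unpack the hypothesis: from $A \# B = A^{1/2} C^{1/2} A^{1/2} \leq I$, congruence by $A^{-1/2}$ gives $C^{1/2} \leq A^{-1}$, hence $0 \leq C^{1/2} \leq A^{-1}$. This is precisely the input of Lemma \ref{L:Furuta} with $X = A^{-1}$, $Y = C^{1/2}$, and exponent $p$, yielding
\begin{displaymath}
(A^{-1})^{p} \# (C^{1/2})^{-p} = A^{-p} \# C^{-p/2} \geq I.
\end{displaymath}
Applying the invariance of $\#$ under inversion together with Lemma \ref{L:order} (2) converts this back into $A^{p} \# C^{p/2} \leq I$, which is exactly the reduced inequality. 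The special case $p = 2$ then reads $A^{3} \# (A \#_{1} B) = A^{3} \# B \leq A$, since $A \#_{1} B = B$.

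I do not expect a genuine analytic obstacle here; the whole argument is driven by exponent bookkeeping. The delicate point is to notice that the weight $p/2$ in $A \#_{p/2} B$ and the power $p+1$ on $A$ are tuned so that, after the congruence reduction to $A^{p} \# C^{p/2} \leq I$ and the passage to inverses $A^{-p} \# C^{-p/2} \geq I$, both exponents become $p$ and fit the exact shape $X^{p} \# Y^{-p}$ of Lemma \ref{L:Furuta} under the substitution $Y = C^{1/2}$. Once this matching is identified and the half-power on $C$ is checked to align with the full power on $A^{-1}$, the inequality follows with no further work.
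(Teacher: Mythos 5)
Your proposal is correct and follows essentially the same route as the paper's proof: reduce by congruence with $A^{1/2}$, feed $C^{1/2} \leq A^{-1}$ into Lemma \ref{L:Furuta}, and invert via Lemma \ref{L:order}. The only difference is presentational (you normalize the target first and then verify it, while the paper argues forward from the hypothesis), so the two arguments coincide step for step.
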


\begin{proof}
Assume that $A \# B \leq I$ for $A, B \in \mathbb{P}$. Taking the congruence transformation by $A^{-1/2}$ on both sides implies $(A^{-1/2} B A^{-1/2})^{1/2} \leq A^{-1}$ by Lemma \ref{L:order} (1). By applying Lemma \ref{L:Furuta} we have
\begin{displaymath}
A^{-p} \# (A^{-1/2} B A^{-1/2})^{-p/2} \geq I
\end{displaymath}
for any $p > 0$. Taking inverse on both sides and congruence transformation by $A^{1/2}$ yield
\begin{displaymath}
A^{p+1} \# (A \#_{\frac{p}{2}} B) = A^{p+1} \# A^{1/2} (A^{-1/2} B A^{-1/2})^{p/2} A^{1/2} \leq A
\end{displaymath}
from Lemma \ref{L:order}.
\end{proof}

\begin{theorem}
Let $A, B \in \mathbb{P}$. Then the following are equivalent.
\begin{itemize}
\item[(1)] $A^{-1} \natural B \leq I$;
\item[(2)] $A \natural B^{-1} \geq I$;
\item[(3)] $A \# B \leq A$;
\item[(4)] $A \# B \geq B$;
\item[(5)] $B \leq A$.
\end{itemize}
\end{theorem}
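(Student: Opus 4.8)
The plan is to show that all five conditions are equivalent by funneling everything through the geometric-mean condition (3), using only congruence invariance of $\#$, the inversion identity $A^{-1}\#B^{-1}=(A\#B)^{-1}$, and the order-reversal under inversion from Lemma \ref{L:order}(2). The recurring elementary fact is that for $Y\ge 0$ the three inequalities $Y\le I$, $Y^{1/2}\le I$ and $Y^{2}\le I$ are mutually equivalent, since each amounts to $\Vert Y\Vert\le 1$; this is where the Loewner-Heinz inequality (Lemma \ref{L:Loewner-Heinz}) enters.

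First I would prove (3)$\iff$(5). Conjugating $B\le A$ by $A^{-1/2}$ and using congruence invariance of the geometric mean gives $A^{-1/2}(A\#B)A^{-1/2}=(A^{-1/2}BA^{-1/2})^{1/2}$, so that $B\le A\iff A^{-1/2}BA^{-1/2}\le I\iff (A^{-1/2}BA^{-1/2})^{1/2}\le I\iff A\#B\le A$, every step being reversible by Lemma \ref{L:order}(1) and the elementary fact above. Next, (4)$\iff$(5) follows the same pattern after conjugating by $B^{-1/2}$ and using the symmetry $A\#B=B\#A$: one obtains $B^{-1/2}(A\#B)B^{-1/2}=(B^{-1/2}AB^{-1/2})^{1/2}$, whence $A\#B\ge B\iff B^{-1/2}AB^{-1/2}\ge I\iff A\ge B$.

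For the spectral-geometric-mean conditions I would first rewrite them in terms of $A\#B$. Unwinding the definition and using the inversion identity, $A^{-1}\natural B=(A\#B)^{1/2}A^{-1}(A\#B)^{1/2}$ and $A\natural B^{-1}=(A\#B)^{-1/2}A(A\#B)^{-1/2}$. Then (1)$\iff$(3): conjugating $A^{-1}\natural B\le I$ by $(A\#B)^{-1/2}$ yields $A^{-1}\le(A\#B)^{-1}$, and applying the order-reversing inversion of Lemma \ref{L:order}(2) gives $A\#B\le A$. Likewise (2)$\iff$(3): conjugating $A\natural B^{-1}\ge I$ by $(A\#B)^{1/2}$ gives directly $A\ge A\#B$. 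Chaining these equivalences closes the loop among (1)--(5).

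I do not expect a serious obstacle here; the content is a sequence of congruence transformations. The only points requiring care are the two closed-form expressions for $A^{-1}\natural B$ and $A\natural B^{-1}$, which hinge on correctly invoking $A^{-1}\#B^{-1}=(A\#B)^{-1}$, and keeping track of the reversal of the inequality direction when passing to inverses in the step (1)$\iff$(3).
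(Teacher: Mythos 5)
Your proof is correct, and its core computation is the same as the paper's: rewrite $A^{-1}\natural B$ as $(A\#B)^{1/2}A^{-1}(A\#B)^{1/2}$ and pass between the five conditions by congruence transformations (Lemma \ref{L:order}(1)), inversion (Lemma \ref{L:order}(2)), and the Loewner--Heinz square-root equivalences. The organization differs in two ways, both of which make your argument more self-contained. First, the paper disposes of (3)$\iff$(4) by citing Theorem 4.2 of Fiedler--Pt\'{a}k, whereas you prove (4)$\iff$(5) directly by conjugating with $B^{-1/2}$ and using $B^{-1/2}(A\#B)B^{-1/2}=(B^{-1/2}AB^{-1/2})^{1/2}$; this removes the external citation at essentially no cost. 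Second, the paper gets (1)$\iff$(2) from the inversion property of the spectral mean, $(A\natural_t B)^{-1}=A^{-1}\natural_t B^{-1}$ (Lemma \ref{L:SG}(6)), while you instead derive the closed form $A\natural B^{-1}=(A\#B)^{-1/2}A(A\#B)^{-1/2}$ from the inversion identity $A^{-1}\#B^{-1}=(A\#B)^{-1}$ for $\#$ and conjugate; the two devices are interchangeable, and your choice keeps all the weight on properties of $\#$ rather than of $\natural$. Two small points of precision: the formula $A^{-1}\natural B=(A\#B)^{1/2}A^{-1}(A\#B)^{1/2}$ needs only $(A^{-1})^{-1}=A$, not the inversion identity (that identity is genuinely needed only for $A\natural B^{-1}$); and in the step $(B^{-1/2}AB^{-1/2})^{1/2}\geq I\iff B^{-1/2}AB^{-1/2}\geq I$ you are using the ``$\geq I$'' variant of your elementary fact, which you stated only in the ``$\leq I$'' form $\Vert Y\Vert\leq 1$ --- it follows immediately by applying the stated form to $Y^{-1}$ together with Lemma \ref{L:order}(2), but it is worth saying so.
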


\begin{proof}
By Lemma \ref{L:SG} (6), items (1) and (2) are equivalent. Moreover, items (3) and (4) are equivalent from \cite[Theorem 4.2]{FP}

We now show (1) $\Longrightarrow$ (3) $\Longrightarrow$ (5) $\Longrightarrow$ (1).
\begin{itemize}
\item[(1)] $\Longrightarrow$ (3) Assume that $A^{-1} \natural B \leq I$, that is,
\begin{displaymath}
\displaystyle (A \# B)^{1/2} A^{-1} (A \# B)^{1/2} \leq I.
\end{displaymath}
By Lemma \ref{L:order} (1) we have $A^{-1} \leq (A \# B)^{-1}$, equivalently $A \# B \leq A$ by Lemma \ref{L:order} (2).

\item[(3)] $\Longrightarrow$ (5) Assume that $A \# B \leq A$. Using Lemma \ref{L:order} (1) again, we have
\begin{displaymath}
\displaystyle (A^{-1/2} B A^{-1/2})^{1/2} \leq I.
\end{displaymath}
This implies $A^{-1/2} B A^{-1/2} \leq I$, and so $B \leq A$.

\item[(5)] $\Longrightarrow$ (1) This follows from the opposite steps shown as (1) $\Longrightarrow$ (3) $\Longrightarrow$ (5).
\end{itemize}
\end{proof}

\begin{lemma} \label{L:contraction}
Let $S \in S(\mathcal{H})$ and let $X \in \mathbb{P}$. Then $S X S \leq X$ implies $S \leq I$.
\end{lemma}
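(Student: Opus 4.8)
The plan is to convert the hypothesis $SXS \le X$ into the statement that an operator similar to $S$ is a contraction, and then to recover the operator inequality $S \le I$ from the similarity invariance of the spectrum together with the self-adjointness of $S$.

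First I would record that $0 \le SXS$: since $S = S^*$ and $X \ge 0$, applying Lemma \ref{L:order}(1) to the congruence by $S$ gives $SXS = SXS^* \ge 0$, so the hypothesis reads $0 \le SXS \le X$. Because $X \in \mathbb{P}$ is invertible with bounded inverse, $X^{-1/2}$ is a bounded self-adjoint operator, and congruence by $X^{-1/2}$ (again Lemma \ref{L:order}(1)) preserves the inequality:
\begin{displaymath}
0 \le X^{-1/2} S X S X^{-1/2} \le X^{-1/2} X X^{-1/2} = I.
\end{displaymath}
Setting $T := X^{1/2} S X^{-1/2}$ and using $S^* = S$, a direct computation gives $T^* T = X^{-1/2} S X S X^{-1/2}$, so the displayed bound is exactly $T^* T \le I$; equivalently $\Vert T \Vert \le 1$, that is, $T$ is a contraction.

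Next I would exploit that $T = X^{1/2} S X^{-1/2}$ is similar to $S$ through the bounded invertible operator $X^{1/2}$, whence $\sigma(S) = \sigma(T)$. The spectral radius of $T$ is at most $\Vert T \Vert \le 1$, so $\sigma(T)$, and therefore $\sigma(S)$, is contained in the closed unit disk. Since $S$ is self-adjoint its spectrum is real, so in fact $\sigma(S) \subseteq [-1, 1]$. By the spectral theorem this is equivalent to $-I \le S \le I$, and in particular $S \le I$, as desired.

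The only place that needs care is the passage from ``$T$ is a contraction'' to ``$S \le I$'': in infinite dimensions one cannot argue eigenvalue by eigenvalue, so I would lean on the similarity invariance of the spectrum together with the spectral characterization $\sigma(S) \subseteq [-1,1] \iff -I \le S \le I$ for self-adjoint $S$, rather than on a diagonalization. I would also make explicit at the outset that membership in $\mathbb{P}$ entails boundedness of $X^{-1/2}$ (as is already implicit in the definition of the Thompson metric), since this is exactly what legitimizes both the congruence by $X^{-1/2}$ and the similarity $S \sim T$.
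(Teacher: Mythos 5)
Your proof is correct, but it takes a genuinely different route from the paper's. The paper stays entirely inside its order-theoretic toolkit: it congruates by $X^{1/2}$ to get $(X^{1/2} S X^{1/2})^{2} = X^{1/2}(SXS)X^{1/2} \leq X^{2}$, invokes the Loewner--Heinz lemma (Lemma \ref{L:Loewner-Heinz}, in the form $C^{2} \leq A \leq B \Rightarrow C \leq A^{1/2}$) to conclude $X^{1/2} S X^{1/2} \leq X$, and then congruates by $X^{-1/2}$ to finish. You instead congruate by $X^{-1/2}$, recognize $X^{-1/2}SXSX^{-1/2}$ as $T^{*}T$ for $T = X^{1/2} S X^{-1/2}$, and pass from $\Vert T \Vert \leq 1$ to $S \leq I$ via similarity invariance of the spectrum and the spectral characterization $\sigma(S) \subseteq [-1,1] \iff -I \leq S \leq I$ for self-adjoint $S$. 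Both arguments are valid in the infinite-dimensional setting, and both correctly treat $X^{-1/2}$ as bounded (which the paper's use of $\mathbb{P}$ presupposes throughout). The paper's proof is shorter and uses only lemmas already stated in the text; yours requires the spectral theorem but yields strictly more, namely the two-sided bound $-I \leq S \leq I$, i.e.\ that $S$ is a contraction --- a sharpening that resonates with the paper's subsequent remark, which phrases the (failed) converse precisely in terms of $S$ being a contraction. (One could also extract $-I \leq S$ from the paper's argument by applying Loewner--Heinz to $-X^{1/2}SX^{1/2}$, but the paper does not do so.)
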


\begin{proof}
Assume that $S X S \leq X$. Then by Lemma \ref{L:order} (1)
\begin{displaymath}
\displaystyle (X^{1/2} S X^{1/2})^{2} = X^{1/2} (S X S) X^{1/2} \leq X^{2}.
\end{displaymath}
By Lemma \ref{L:Loewner-Heinz}, $X^{1/2} S X^{1/2} \leq X$. Using Lemma \ref{L:order} (1) again, we obtain $S \leq I$.
\end{proof}

\begin{remark}
In general, the converse of Lemma \ref{L:contraction} does not hold. For instance, let
\begin{displaymath}
S =
\left(
  \begin{array}{cc}
    0 & 1 \\
    1 & 0 \\
  \end{array}
\right), \
X =
\left(
  \begin{array}{cc}
    a & b \\
    b & c \\
  \end{array}
\right) \in \mathbb{P}_{2},
\end{displaymath}
where $a, c > 0$, $a \neq c$ and $ac - b^{2} > 0$. Note that $S$ is a permutation matrix, so $S^{2} = I$. That is, $S$ is a contraction. However,
\begin{displaymath}
X - S X S =
\left(
  \begin{array}{cc}
    a & b \\
    b & c \\
  \end{array}
\right) -
\left(
  \begin{array}{cc}
    c & b \\
    b & a \\
  \end{array}
\right) =
\left(
  \begin{array}{cc}
    a-c & 0 \\
    0 & c-a \\
  \end{array}
\right),
\end{displaymath}
which is not positive semi-definite, in general.
\end{remark}

Ando and Hiai \cite{AH} have established a characterization of two-variable geometric means such as
\begin{center}
$A \# B \leq I$ \ implies \ $A^{p} \# B^{p} \leq I$
\end{center}
for all $p \geq 1$ and positive definite matrices $A$ and $B$. We call it the \emph{Ando-Hiai inequality}. Yamazaki \cite{Ya} generalized the Ando-Hiai inequality to the Riemannian mean (or Cartan mean, Karcher mean) of positive definite matrices, which is the multi-variable geometric mean defined as the unique minimizer of the weighted sum of squares of the Riemannian trace distances to each variable. Lawson and Lim \cite{LL14} naturally defined the Karcher mean of positive definite operators as the unique positive definite solution of the Karcher equation, and extended Yamazaki's result to positive definite operators.

We give a different sufficient condition for the Ando-Hiai inequality in terms of the spectral geometric mean.
\begin{theorem} \label{T:main}
Let $A, B \in \mathbb{P}$ and $t \in (0,1]$. Then $A^{-1} \natural_{t} B \leq A^{-1}$ implies $A^{p} \# B^{p} \leq I$ for any $p \geq 1$.
\end{theorem}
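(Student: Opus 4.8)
The plan is to reduce the weighted hypothesis $A^{-1} \natural_{t} B \le A^{-1}$ to the plain condition $A \# B \le I$, which is exactly the hypothesis of the Ando--Hiai inequality, and then to invoke that inequality. The whole argument is short once the right reformulation is found, and the engine that drives it is Lemma \ref{L:contraction}, which is tailored precisely to strip off a congruence.

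First I would unfold the definition of the weighted spectral geometric mean in \eqref{E:sp-geomean}. Taking $A^{-1}$ in the first slot and using $(A^{-1})^{-1} = A$, we get
\[
A^{-1} \natural_{t} B = (A \# B)^{t} A^{-1} (A \# B)^{t},
\]
so the hypothesis reads $(A \# B)^{t} A^{-1} (A \# B)^{t} \le A^{-1}$. Now set $S = (A \# B)^{t}$, which is positive definite (hence self-adjoint) since $A \# B \in \mathbb{P}$ and $t \in (0,1]$, and set $X = A^{-1} \in \mathbb{P}$. The displayed inequality is exactly $S X S \le X$, so Lemma \ref{L:contraction} applies and gives $S \le I$, that is, $(A \# B)^{t} \le I$.

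The remaining step is to pass from $(A \# B)^{t} \le I$ to $A \# B \le I$, and this is the one place that calls for a little care: for $t \in (0,1]$ the exponent $1/t$ may exceed $1$, so the Loewner--Heinz inequality cannot be applied directly to take the $(1/t)$-power. I would instead argue through the scalar spectral calculus, which suffices here because $(A \# B)^{t}$ is a function of the single positive operator $A \# B$: the relation $(A \# B)^{t} \le I$ confines the spectrum of $A \# B$ to $(0,1]$, equivalently $\Vert A \# B \Vert = \Vert (A \# B)^{t} \Vert^{1/t} \le 1$, whence $A \# B \le I$. With $A \# B \le I$ established, the two-variable Ando--Hiai inequality \cite{AH}, valid for positive definite operators in view of the operator extension used by Lawson and Lim \cite{LL14}, yields $A^{p} \# B^{p} \le I$ for every $p \ge 1$, which completes the proof. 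The only genuine subtlety is this exponent issue in going from $(A \# B)^{t} \le I$ to $A \# B \le I$; the structural heart of the argument is simply that unfolding $A^{-1} \natural_{t} B$ exhibits the hypothesis as a congruence $S A^{-1} S \le A^{-1}$ to which Lemma \ref{L:contraction} directly applies.
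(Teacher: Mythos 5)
Your proof is correct and follows essentially the same route as the paper's: unfold $A^{-1}\natural_{t}B = (A\#B)^{t}A^{-1}(A\#B)^{t}$, apply Lemma \ref{L:contraction} with $S=(A\#B)^{t}$ and $X=A^{-1}$ to get $(A\#B)^{t}\le I$, deduce $A\#B\le I$, and invoke the Ando--Hiai inequality. The only difference is that you explicitly justify the passage from $(A\#B)^{t}\le I$ to $A\#B\le I$ via the spectral calculus (the paper simply states it as ``equivalently''), which is a welcome, correct clarification since Loewner--Heinz does not apply to the exponent $1/t\ge 1$.
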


\begin{proof}
Assume that $A^{-1} \natural_{t} B = (A \# B)^{t} A^{-1} (A \# B)^{t} \leq A^{-1}$. By Lemma \ref{L:contraction} we have $(A \# B)^{t} \leq I$, equivalently $A \# B \leq I$. By the Ando-Hiai inequality we obtain $A^{p} \# B^{p} \leq I$ for any $p \geq 1$.
\end{proof}

\begin{remark}
The following are sufficient conditions of the inequality $A \# B \leq I$ for $A, B \in \mathbb{P}$:
\begin{itemize}
\item[(1)] $A \leq I$ and $B \leq I$;
\item[(2)] $\log A + \log B \leq 0$;
\item[(3)] $A^{-1} \natural_{t} B \leq A^{-1}$ for any $t \in (0,1]$.
\end{itemize}
Obviously, (1) implies (2) since the logarithmic map is operator monotone, but the converse is not true in general. On the other hand, the relations between (1) and (3), or between (2) and (3) do not know yet. So (3) is a new sufficient condition of the inequality $A \# B \leq I$, and also of the Ando-Hiai inequality: $A^{p} \# B^{p} \leq I$ for any $p \geq 1$.
\end{remark}

\section{Gyrogroups and gyrolines}

We review in this section the algebraic structure of a gyrogroup as a natural extension of a group into the regime of the nonassociative algebra. We then introduce a gyrovector space providing the setting for hyperbolic geometry just as a vector space provides the setting for Euclidean geometry. A. A. Ungar has introduced and intensely studied the theory of gyrogroups and gyrovector spaces in a series of papers and books; see \cite{Un08} and its bibliography. We also see what geometric means of positive definite Hermitian matrices are in the sense of gyrovector spaces.

\subsection{Gyrogroups and gyrovector spaces}

The binary operation in a gyrogroup is not associative, in general. The breakdown of associativity for gyrogroup operations is salvaged in a modified form, called gyroassociativity. The axioms for a (gyrocommutative) gyrogroup $G$ is reminiscent of those for a (commutative) group.
\begin{definition} \label{D:gyrogroup}
A binary system $(G, \oplus)$ is a \emph{gyrogroup} if it satisfies
the following axioms for all $a, b, c \in G$:
\begin{itemize}
\item[(G1)] $e \oplus a = a \oplus e = a$ \ (existence of identity);
\item[(G2)] $a \oplus (\ominus a) = (\ominus a) \oplus a = e$ \ (existence of inverses);
\item[(G3)] There is an automorphism $\gyr[a,b] : G \to G$ for each $a, b \in G$ such that
 \begin{center}
 $a \oplus (b \oplus c) = (a \oplus b) \oplus \gyr[a,b]c$ \ (gyroassociativity);
 \end{center}
\item[(G4)] $\gyr[e, a] =$ id$_{G}$, where id$_{G}$ is the identity map on $G$;
\item[(G5)] $\gyr[a \oplus b, b] = \gyr[a,b]$ \ (loop property).
\end{itemize}
A gyrogroup $(G, \oplus)$ is \emph{gyrocommutative} if it satisfies
\begin{center}
$a \oplus b = \gyr[a,b](b \oplus a)$ \ (gyrocommutativity).
\end{center}
A gyrogroup is \emph{uniquely $2$-divisible} if for every $b \in G$,
there exists a unique element $a \in G$ such that $a \oplus a = b$.
\end{definition}

In (G3) the automorphism $\gyr[a,b]$ for each $a,b \in G$ is called the \emph{Thomas gyration} or the \emph{gyroautomorphism}, simply \emph{gyration}. From (G2) and (G3) we have
\begin{displaymath}
\displaystyle \gyr[a,b]c = \ominus (a \oplus b) \oplus [ a \oplus (b \oplus c) ]
\end{displaymath}
for all $a, b, c \in G$. In Euclidean space it plays a role of rotation in the plane spanned by $\{a, b\}$ leaving the orthogonal complement fixed.

\begin{definition} \label{D:cooperation}
Let $(G, \oplus)$ be a gyrogroup. The \emph{gyrogroup cooperation} is a binary operation in $G$ given by
\begin{displaymath}
a \boxplus b = a \oplus \gyr[a, \ominus b] b
\end{displaymath}
for all $a, b \in G$. The groupoid $(G, \boxplus)$ is said to be the cogyrogroup associated with the gyrogroup $(G, \oplus)$.
\end{definition}

The gyrogroup cooperation $\boxplus$ gives a useful criterion for the gyrogroup to be gyrocommutative.
\begin{theorem} \cite[Theorem 3.2, Theorem 3.4]{Un08}
Let $(G, \oplus)$ be a gyrogroup. The following are equivalent.
\begin{itemize}
\item[(1)] $G$ is gyrocommutative.
\item[(2)] $\ominus (a \oplus b) = \ominus a \ominus b$ for all $a, b \in G$.
\item[(3)] $a \boxplus b = b \boxplus a$ for all $a, b \in G$.
\end{itemize}
\end{theorem}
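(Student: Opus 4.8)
The plan is to route everything through the \emph{gyrosum inversion law}
\[
\ominus(a \oplus b) = \gyr[a,b](\ominus b \ominus a),
\]
which holds in every gyrogroup, and to split the three-way equivalence into the two independent equivalences (1) $\Leftrightarrow$ (2) and (2) $\Leftrightarrow$ (3). Before either, I would record the elementary gyration identities that follow from (G1)--(G5): the reduced cancellation laws coming from $\gyr[a, \ominus a] = \gyr[\ominus a, a] = \mathrm{id}_G$, the gyration inversion law $\gyr[b,a] = \gyr[a,b]^{-1}$, and the even symmetry $\gyr[\ominus a, \ominus b] = \gyr[a,b]$. The inversion law itself I would obtain by verifying, via gyroassociativity (G3) and the reduced cancellation law $b \oplus (\ominus b \oplus x) = x$, that $(a \oplus b) \oplus \gyr[a,b](\ominus b \ominus a) = a \oplus (b \oplus (\ominus b \ominus a)) = a \oplus (\ominus a) = e$, so that $\gyr[a,b](\ominus b \ominus a)$ is the (two-sided) inverse of $a \oplus b$.

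For (1) $\Rightarrow$ (2), I would apply gyrocommutativity to the pair $(\ominus b, \ominus a)$ and use even symmetry to get $\ominus b \ominus a = \gyr[b,a](\ominus a \ominus b)$; substituting this into the gyrosum inversion law and collapsing $\gyr[a,b]\gyr[b,a] = \mathrm{id}_G$ yields $\ominus(a \oplus b) = \ominus a \ominus b$. For (2) $\Rightarrow$ (1), the automorphic inverse property together with gyrosum inversion gives $\ominus a \ominus b = \gyr[a,b](\ominus b \ominus a)$ for all $a,b$; replacing $a,b$ by $\ominus a, \ominus b$ and again invoking even symmetry produces exactly $a \oplus b = \gyr[a,b](b \oplus a)$. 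Both directions here are short, purely formal manipulations once the preliminary identities are in place.

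The equivalence (2) $\Leftrightarrow$ (3) is where the genuine work lies, and I expect the commutativity of the cooperation to be the main obstacle. Unfolding $a \boxplus b = a \oplus \gyr[a, \ominus b]\,b$, the target $a \boxplus b = b \boxplus a$ reads
\[
a \oplus \gyr[a, \ominus b]\,b = b \oplus \gyr[b, \ominus a]\,a,
\]
and neither side visibly reduces to the other without careful gyration bookkeeping, since the gyrations attached to the two cooperations differ. My approach would be to rewrite the inner gyrated term using gyrosum inversion applied to the pair $(a, \ominus b)$, namely $\ominus(a \oplus \ominus b) = \gyr[a, \ominus b](b \oplus \ominus a)$; under the automorphic inverse property the left-hand side collapses to $\ominus a \oplus b$, producing a relation that lets me transport $\gyr[a, \ominus b]$ past the operation and isolate its effect on $b$. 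The delicate point is matching the differing gyrations on the two sides, and this is precisely where the gyration inversion and loop-property (G5) identities must be used; the fact that this matching succeeds exactly when $\ominus(a \oplus b) = \ominus a \ominus b$ holds is what forces the equivalence in both directions. I would therefore organize the argument as a single chain of equalities reducing $a \boxplus b = b \boxplus a$ to the automorphic inverse property, so that (2) and (3) stand or fall together.
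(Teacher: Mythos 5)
First, be aware that the paper does not prove this statement at all: it is quoted directly from Ungar's book (Theorems 3.2 and 3.4 of \cite{Un08}), so your proposal can only be judged against the standard arguments, not against a proof in the paper. Your half (1) $\Leftrightarrow$ (2) is sound and is essentially the classical argument: the gyrosum inversion law $\ominus(a\oplus b)=\gyr[a,b](\ominus b\ominus a)$, gyration inversion $\gyr[b,a]=\gyr[a,b]^{-1}$, and evenness $\gyr[\ominus a,\ominus b]=\gyr[a,b]$ do yield both directions exactly as you describe. Note, however, that evenness and gyration inversion are themselves nontrivial theorems about general gyrogroups (roughly the content of Theorem 2.34 in \cite{Un08}), so simply ``recording'' them hides a genuine portion of the work.

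The genuine gap is in (2) $\Leftrightarrow$ (3), which you yourself identify as the crux and then do not carry out. The relation you extract under the automorphic inverse property, $\ominus a\oplus b=\gyr[a,\ominus b](b\ominus a)$, applies the gyration to $b\ominus a$, not to $b$; expanding by the automorphism property leaves $\gyr[a,\ominus b]b\ominus\gyr[a,\ominus b]a=\ominus a\oplus b$, which still entangles two gyrated terms, and no chain of equalities is actually exhibited that ``matches the differing gyrations on the two sides.'' Worse, the direction (3) $\Rightarrow$ (2) is never argued; it is only asserted that the matching ``succeeds exactly when'' the automorphic inverse property holds. A concrete way to close the gap is to drop the gyration bookkeeping entirely and use the right cancellation law, valid in every gyrogroup: $a\boxplus b$ is the unique solution $x$ of the equation $x\ominus b=a$. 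Commutativity of $\boxplus$ then says precisely that $z:=b\boxplus a$, the unique solution of $x\ominus a=b$, also satisfies $z\ominus b=a$; substituting $b=z\ominus a$ (every pair $(z,a)$ arises this way), condition (3) becomes the involution property $z\ominus(z\ominus a)=a$ for all $z,a$. If (2) holds, then $\ominus(z\ominus a)=\ominus z\oplus a$, so $z\ominus(z\ominus a)=z\oplus(\ominus z\oplus a)=a$ by left cancellation, giving (3). Conversely, if $z\oplus\bigl(\ominus(z\ominus a)\bigr)=a$ for all $z,a$, then uniqueness of the solution of $z\oplus x=a$ forces $\ominus(z\ominus a)=\ominus z\oplus a$, and replacing $a$ by $\ominus a$ gives (2). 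With this replacement your outline becomes a complete proof.
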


In the same way that vector spaces are commutative groups of vectors admitted scalar multiplication, gyrovector spaces are gyrocommutative gyrogroups of gyrovectors admitted properly scalar multiplication. We give a definition of gyrovector spaces slightly different from Definition 6.2 in \cite{Un08}.
\begin{definition} \label{D:gyrovector space}
A gyrocommutative gyrogroup $(G, \oplus)$ equipped with a scalar multiplication
$$ (t, x) \mapsto t \otimes x: \mathbb{R} \times G \to G $$
is called a \emph{gyrovector space} if it satisfies the following for $s, t \in \mathbb{R}$ and $a, b, c \in G$.
\begin{enumerate}
\item[(V1)] $1 \otimes a = a$, $0 \otimes a = t \otimes e = e$, and $(-1) \otimes a = \ominus a$.
\item[(V2)] $(s + t) \otimes a = s \otimes a \oplus t \otimes a$.
\item[(V3)] $(s t) \otimes a = s \otimes (t \otimes a)$.
\item[(V4)] $\gyr[a,b] (t \otimes c) = t \otimes \gyr[a,b] c$.
\end{enumerate}
\end{definition}

We now see some typical examples of the gyrovector space in the Euclidean space $\mathbb{R}^{n}$.
We consider elements in $\mathbb{R}^{n}$ naturally as column vectors, so that $\uu^{T} \vv$ is the usual inner product of $\uu, \vv \in \mathbb{R}^{n}$ written in matrix form. A. Ungar has introduced in \cite{Un08} two distinctive examples of gyrovector spaces in the open unit ball $\mathbf{B}$ of the $n$-dimensional vector space $\mathbb{R}^{n}$, also corresponding to two models of hyperbolic geometry.

\begin{example}
We define the binary operations $\oplus_{E}$ and $\oplus_{M}$ in $\mathbf{B}$ by
\begin{eqnarray}
\displaystyle \uu \oplus_{E} \vv & = & \frac{1}{1 + \uu^{T} \vv}
\left\{ \uu + \frac{1}{\gamma_{\uu}} \vv + \frac{\gamma_{\uu}}{1 + \gamma_{\uu}} (\uu^{T} \vv) \uu \right\}, \\
\displaystyle \uu \oplus_{M} \vv & = & \frac{1}{1 + 2 \uu^{T} \vv + \Vert \uu \Vert^{2} \Vert \vv \Vert^{2}}
\left\{ \left( 1 + 2 \uu^{T} \vv + \Vert \vv \Vert^{2} \right) \uu + \left( 1 - \Vert \uu \Vert^{2} \right) \vv \right\}
\end{eqnarray}
for any $\uu, \vv \in \mathbf{B}$, where $\gamma_{\mathbf{v}} = \frac{1}{\sqrt{1 - \Vert \mathbf{v} \Vert^2}}$ is the well-known Lorentz (gamma) factor.
The binary systems $(\mathbf{B}, \oplus_{E})$ and $(\mathbf{B}, \oplus_{M})$ form (uniquely 2-divisible) gyrocommutative gyrogroups called the \emph{Einstein gyrogroup} and \emph{M\"obius gyrogroup}, respectively.

We define a map $\otimes: \mathbb{R} \times \mathbf{B} \to \mathbf{B}$ by
\begin{equation} \label{Eq:scalar}
\begin{split}
\displaystyle t \otimes \vv & = \frac{\left( 1 + \Vert \vv \Vert \right)^{t} - \left( 1 - \Vert \vv \Vert \right)^{t}}{\left( 1 + \Vert \vv \Vert \right)^{t} + \left( 1 - \Vert \vv \Vert \right)^{t}} \frac{\vv}{\Vert \vv \Vert} \\
& = \tanh \left( t \tanh^{-1} \Vert \mathbf{v} \Vert \right) \frac{\vv}{\Vert \vv \Vert},
\end{split}
\end{equation}
for $t \in \mathbb{R}$ and $\vv (\neq \mathbf{0}) \in \mathbf{B}$, and define $t \otimes \mathbf{0} := \mathbf{0}$. We call $(\mathbf{B}, \oplus_{E}, \otimes)$ and $(\mathbf{B}, \oplus_{M}, \otimes)$ the \emph{Einstein gyrovector space} and the \emph{M\"obius gyrovector space}, respectively.
\end{example}

The Beltrami-Klein ball model of hyperbolic geometry is algebraically regulated by Einstein gyrovector spaces. The geodesics of this model, called gyrolines, are Euclidean straight lines in the open unit ball. On the other hand, the Poincar\'e ball model of hyperbolic geometry is algebraically regulated by M\"obius gyrovector spaces. The geodesics of this model are Euclidean circular arcs in the open unit ball that intersect the boundary of the ball orthogonally.

\begin{example} \cite[Example 2.2, Example 3.2]{Kim2} \label{Ex:cone}
We define the binary operation $\oplus$ and a scalar multiplication $\circ$ on the open convex cone $\mathbb{P}_{n}$ of positive definite matrices by
\begin{displaymath}
\begin{split}
\displaystyle \oplus: & \ \mathbb{P}_{n} \times \mathbb{P}_{n} \to \mathbb{P}_{n}, \ A \oplus B = A^{1/2} B A^{1/2}, \\
\displaystyle \circ: & \ \mathbb{R} \times \mathbb{P}_{n} \to \mathbb{P}_{n}, \ t \circ A = A^{t}
\end{split}
\end{displaymath}
for any $A, B \in \mathbb{P}_{n}$ and $t \in \mathbb{R}$. Then the system $(\mathbb{P}_{n}, \oplus, \circ)$ forms a gyrovector space, and the gyroautomorphism generated by $A$ and $B$ is given by
\begin{equation} \label{E:gyration}
\displaystyle \gyr[A,B] X = U(A, B) X U(A, B)^{-1},
\end{equation}
where $U(A, B) = (A^{1/2} B A^{1/2})^{-1/2} A^{1/2} B^{1/2}$ is a unitary part of the polar decomposition for $A^{1/2} B^{1/2}$ such that
\begin{displaymath}
\displaystyle A^{1/2} B^{1/2} = (A \oplus B)^{1/2} U(A, B).
\end{displaymath}
\end{example}

\begin{remark}
The inner product on M$_{n}(\mathbb{C})$, the vector space of all $n \times n$ matrices with complex entries, is naturally defined as $\langle A, B \rangle = \tr (A B^{*})$. The gyroautomorphism on $\mathbb{P}_{n}$ preserves the inner product, and so the norm induced by inner product. Indeed, for any $A, B, X, Y \in \mathbb{P}_{n}$
\begin{displaymath}
\begin{split}
\langle \gyr[A,B]X, \gyr[A,B]Y \rangle & = \tr [ U(A, B) X U(A, B)^{-1} (U(A, B) Y U(A, B)^{-1})^{\dag} ] \\
& = \tr [ U(A, B) X Y^{\dag} U(A, B)^{\dag} ] \\
& = \tr [ X Y^{\dag} ] = \langle X, Y \rangle.
\end{split}
\end{displaymath}
\end{remark}

While A. Ungar has explained a gyrogroup structure for $2 \times 2$ density matrices in Chapter 9, \cite{Un08}, we now see an example of gyrovector space for arbitrary dimensional density matrices.

\begin{example} \cite{Kim} \label{Ex:density}
Let $\mathbb{D}_{n}$ be a set of all $n \times n$ invertible density matrices, which are positive definite Hermitian matrices of trace $1$.
We define a binary operation $\odot$ and a scalar multiplication $\star$ given by
\begin{displaymath}
\begin{split}
\displaystyle \odot: & \ \mathbb{D}_{n} \times \mathbb{D}_{n} \to \mathbb{D}_{n}, \ \rho \odot \sigma = \frac{\rho^{\frac{1}{2}} \sigma \rho^{\frac{1}{2}}}{\tr(\rho \sigma)} = \frac{\rho \oplus \sigma}{\tr(\rho \oplus \sigma)} \\
\displaystyle \star: & \ \mathbb{R} \times \mathbb{D}_{n} \to \mathbb{D}_{n}, \ t \ast \rho = \frac{\rho^{t}}{\tr (\rho^{t})} = \frac{t \circ \rho}{\tr (t \circ \rho)}
\end{split}
\end{displaymath}
for any $\rho, \sigma \in \mathbb{D}_{n}$ and $t \in \mathbb{R}$. Then $(\mathbb{D}_{n}, \odot, \star)$ is a gyrovector space. Note that the identity element in $(\mathbb{D}_{n}, \odot, \star)$ is $\displaystyle \frac{1}{n} I_{n}$ and the inverse of $\rho$ is $\displaystyle (-1) \star \rho = \frac{1}{\tr (\rho^{-1})} \rho^{-1}$, where $I_{n}$ denotes the $n \times n$ identity matrix.
\end{example}

\subsection{Gyrolines and cogyrolines}

The \emph{gyroline} passing through the points $a$ and $b$ in the gyrovector space $(G, \oplus, \otimes)$ is defined by
\begin{equation} \label{E:gyroline}
L: \mathbb{R} \times G \times G \to G, \ L(t; a, b) = a \oplus t \otimes (\ominus a \oplus b).
\end{equation}
The gyroline is uniquely determined by given points, and a left gyrotranslation of a gyroline is again a gyroline by Theorem 6.21 in \cite{Un08}. In other words,
\begin{displaymath}
x \oplus L(t; a, b) = L(t; x \oplus a, x \oplus b)
\end{displaymath}
for any $x \in G$. The point $\displaystyle L(1/2; a, b) = a \oplus \frac{1}{2} \otimes (\ominus a \oplus b)$ is called the \emph{gyromidpoint} of given two points $a$ and $b$ in the gyrovector space $(G, \oplus, \otimes)$. By using the gyrogroup cooperation, we can write alternatively
\begin{equation} \label{E:gyromidpoint}
L \left( \frac{1}{2}; a, b \right) = \frac{1}{2} \otimes (a \boxplus b).
\end{equation}

On the other hand, the \emph{cogyroline} passing through the points $a$ and $b$ in the gyrovector space $(G, \oplus, \otimes)$ is defined by
\begin{equation} \label{E:gyroline}
L^{c}: \mathbb{R} \times G \times G \to G, \ L^{c}(t; a, b) = t \otimes (\ominus a \boxplus b) \oplus a.
\end{equation}
The gyroline is uniquely determined by given points, and the point $\displaystyle L^{c}(1/2; a, b) = \frac{1}{2} \otimes (\ominus a \boxplus b) \oplus a$ is called the \emph{cogyromidpoint} of given two points $a$ and $b$ in the gyrovector space $(G, \oplus, \otimes)$.

Gyrolines and cogyrolines play an important role in the hyperbolic analytic geometry regulated by the gyrovector space. See Chapter 6 in \cite{Un08} for more information. We here see the connection of the weighted (spectral) geometric mean and the gyroline (cogyroline, respectively) on the open convex cone $\mathbb{P}$ of positive definite matrices and on the gyrovector space $\mathbb{D}_{n}$ of invertible density matrices.

\begin{theorem} \label{T:gyrolines1}
For any $A, B \in (\mathbb{P}_{n}, \oplus, \circ)$ and $t \in [0,1]$
\begin{displaymath}
\displaystyle L(t; A, B) = A \#_{t} B, \ \ \ L^{c}(t; A, B) = A \natural_{t} B.
\end{displaymath}
\end{theorem}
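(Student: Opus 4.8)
The plan is to handle the two identities separately: the gyroline follows by direct substitution, while the cogyroline reduces to a single clean closed form for the cooperation $\boxplus$. First I would record the gyrogroup data of $(\mathbb{P}_{n}, \oplus, \circ)$ from Example \ref{Ex:cone}. The identity is $I$, since $A \oplus I = A^{1/2} I A^{1/2} = A$, and the inverse is $\ominus A = A^{-1}$, since $A \oplus A^{-1} = A^{1/2} A^{-1} A^{1/2} = I$. These are the only ingredients needed besides the definitions of $\#_{t}$ and $\natural_{t}$.

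For the gyroline I would expand the definition of $L$ from the inside out. Computing $\ominus A \oplus B = A^{-1} \oplus B = A^{-1/2} B A^{-1/2}$, then $t \circ (\ominus A \oplus B) = (A^{-1/2} B A^{-1/2})^{t}$, and finally
\[
L(t; A, B) = A \oplus (A^{-1/2} B A^{-1/2})^{t} = A^{1/2} (A^{-1/2} B A^{-1/2})^{t} A^{1/2} = A \#_{t} B,
\]
which is exactly the geodesic \eqref{E:geodesic}. This part is routine.

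For the cogyroline the crux is to evaluate $\ominus A \boxplus B$ in closed form. Rather than unwinding the cooperation $\ominus A \boxplus B = \ominus A \oplus \gyr[\ominus A, \ominus B] B$ through the unitary polar factors $U(\cdot, \cdot)$ of \eqref{E:gyration} --- which is precisely where the computation would turn heavy --- I would invoke the general gyromidpoint identity \eqref{E:gyromidpoint}. Specializing the gyroline formula just proved to $t = 1/2$ gives
\[
A \# B = L\!\left(\tfrac{1}{2}; A, B\right) = \tfrac{1}{2} \circ (A \boxplus B) = (A \boxplus B)^{1/2},
\]
since $\tfrac{1}{2} \circ X = X^{1/2}$. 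Hence the clean formula $A \boxplus B = (A \# B)^{2}$, valid for all $A, B \in \mathbb{P}_{n}$. Substituting $\ominus A = A^{-1}$ in place of $A$ yields $\ominus A \boxplus B = (A^{-1} \# B)^{2}$.

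With this in hand the cogyroline is immediate: using $t \circ X = X^{t}$,
\[
L^{c}(t; A, B) = t \circ (\ominus A \boxplus B) \oplus A = (A^{-1} \# B)^{2t} \oplus A = (A^{-1} \# B)^{t}\, A\, (A^{-1} \# B)^{t} = A \natural_{t} B
\]
by the definition \eqref{E:sp-geomean} of the weighted spectral geometric mean. The main obstacle is really the single identity $A \boxplus B = (A \# B)^{2}$; exploiting \eqref{E:gyromidpoint} sidesteps the direct gyration computation, so once that is secured both formulas follow by bookkeeping.
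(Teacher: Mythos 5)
Your proof is correct, but the cogyroline half takes a genuinely different route from the paper. The paper evaluates $\ominus A \boxplus B$ head-on: it unwinds the cooperation as $A^{-1} \oplus \gyr[A^{-1}, B^{-1}] B$, plugs in the explicit gyration \eqref{E:gyration} with the unitary polar factor $U(A^{-1}, B^{-1})$, and verifies by matrix algebra that the result is $A^{-1/2} (A^{1/2} B A^{1/2})^{1/2} A^{-1} (A^{1/2} B A^{1/2})^{1/2} A^{-1/2} = (A^{-1} \# B)^{2}$ --- exactly the computation you deliberately sidestep. You instead derive the closed form $A \boxplus B = (A \# B)^{2}$ abstractly, by specializing your already-proven gyroline formula at $t = \tfrac{1}{2}$ and invoking the gyromidpoint identity \eqref{E:gyromidpoint}, then substituting $A^{-1}$ for $A$. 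Both are valid. What the paper's route buys is self-containedness at the level of Example \ref{Ex:cone}: everything reduces to concrete matrix identities, with no appeal to general gyrovector-space theorems. What your route buys is economy and portability: no unitary factors appear, the useful identity $A \boxplus B = (A \# B)^{2}$ is isolated explicitly, and the argument would transfer to any gyrovector space in which the gyroline has a known closed form. The trade-off is that you lean on \eqref{E:gyromidpoint}, which the paper records from Ungar's theory without proof; since the paper's axioms (V1)--(V4) plus gyrocommutativity are exactly what Ungar's two-sum and gyromidpoint identities require, and since the paper itself states \eqref{E:gyromidpoint} as available, this reliance is legitimate --- but your proof is only as self-contained as that cited identity.
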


\begin{proof}
From Example \ref{Ex:cone} we can easily obtain the gyroline on $(\mathbb{P}_{n}, \oplus, \circ)$ passing through $A$ and $B$ such as
\begin{displaymath}
L(t; A, B) = A \oplus t \circ ((-1) \circ A \oplus B) = A^{1/2} (A^{-1/2} B A^{-1/2})^{t} A^{1/2} = A \#_{t} B
\end{displaymath}
for $t \in [0,1]$. Moreover, since
\begin{displaymath}
\begin{split}
(-1) \circ A \boxplus B & = A^{-1} \oplus \gyr[A^{-1}, B^{-1}] B \\
& = A^{-1/2} (A^{1/2} B A^{1/2})^{1/2} A^{-1} (A^{1/2} B A^{1/2})^{1/2} A^{-1/2} \\
& = (A^{-1} \# B)^{2},
\end{split}
\end{displaymath}
the cogyroline passing through $A$ and $B$ is
\begin{displaymath}
L^{c}(t; A, B) = t \circ ((-1) \circ A \boxplus B) \oplus A = (A^{-1} \# B)^{t} A (A^{-1} \# B)^{t} = A \natural_{t} B.
\end{displaymath}
\end{proof}

\begin{theorem} \label{T:gyrolines2}
For any $\rho, \sigma \in (\mathbb{D}_{n}, \odot, \star)$ and $t \in [0,1]$
\begin{displaymath}
\displaystyle L(t; \rho, \sigma) = \frac{1}{\tr (\rho \#_{t} \sigma)} \rho \#_{t} \sigma, \ \ \ L^{c}(t; \rho, \sigma) = \frac{1}{\tr (\rho \natural_{t} \sigma)} \rho \natural_{t} \sigma.
\end{displaymath}
\end{theorem}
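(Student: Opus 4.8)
The plan is to realize the density-matrix gyrovector space $(\mathbb{D}_{n}, \odot, \star)$ as the projectivization of the cone gyrovector space $(\mathbb{P}_{n}, \oplus, \circ)$ and to transport Theorem~\ref{T:gyrolines1} through it. Introduce the normalizing map $\pi : \mathbb{P}_{n} \to \mathbb{D}_{n}$, $\pi(A) = A / \tr(A)$, which is scale-invariant ($\pi(\lambda A) = \pi(A)$ for all $\lambda > 0$) and restricts to the identity on $\mathbb{D}_{n}$. The whole argument rests on the fact that $\pi$ intertwines the two pairs of operations.

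First I would establish the intertwining identities $\pi(A) \odot \pi(B) = \pi(A \oplus B)$ and $t \star \pi(A) = \pi(t \circ A)$ for all $A, B \in \mathbb{P}_{n}$ and $t \in \mathbb{R}$. Both are immediate from the definitions in Example~\ref{Ex:density} once one uses homogeneity of the cone operations, namely $(\lambda A) \oplus (\mu B) = \lambda \mu (A \oplus B)$ and $(\lambda A)^{t} = \lambda^{t} A^{t}$, together with the scale-invariance of $\pi$ to cancel the trace factors. In particular $\ominus \pi(A) = (-1) \star \pi(A) = \pi(A^{-1})$, which matches the inverse formula recorded in Example~\ref{Ex:density}.

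Next I would propagate these identities through the algebraic ingredients of the (co)gyroline. Since the gyration is determined by the gyrogroup operation via $\gyr[a,b]c = \ominus(a \oplus b) \oplus [a \oplus (b \oplus c)]$ and the cooperation $\boxplus$ is built from the gyration, repeated use of the two intertwining identities gives $\gyr[\pi(A), \pi(B)] \pi(C) = \pi(\gyr[A,B] C)$ and $\pi(A) \boxplus \pi(B) = \pi(A \boxplus B)$. Substituting these into the gyroline and cogyroline formulas and evaluating term by term yields, for $\rho, \sigma \in \mathbb{D}_{n}$,
\begin{center}
$L(t; \rho, \sigma) = \pi\big(\rho \#_{t} \sigma\big)$ \quad and \quad $L^{c}(t; \rho, \sigma) = \pi\big(\rho \natural_{t} \sigma\big)$,
\end{center}
where the arguments of $\pi$ are computed by Theorem~\ref{T:gyrolines1} on $(\mathbb{P}_{n}, \oplus, \circ)$. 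Since $\pi(X) = X / \tr(X)$, this is exactly the asserted formula.

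The routine parts are the two intertwining identities and the bookkeeping that turns them into the gyroline and cogyroline statements. The step that needs genuine care is the cogyroline, since it passes through the cooperation and hence the gyration: I must make sure the gyration on $\mathbb{D}_{n}$ is truly the $\pi$-image of the cone gyration \eqref{E:gyration}, not merely similar to it. The clean resolution is that the gyration is \emph{uniquely} determined by the gyrogroup operation, so once the operation is shown to intertwine under $\pi$ the gyration intertwines automatically; alternatively one verifies $\gyr[\pi(A), \pi(B)] \pi(C) = \pi(\gyr[A,B] C)$ directly from the explicit formula \eqref{E:gyration}, using that conjugation by a unitary is trace-preserving and hence commutes with $\pi$.
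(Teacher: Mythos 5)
Your proof is correct, and it rests on the same underlying mechanism as the paper's — the density operations $\odot$, $\star$ are trace-normalizations of the cone operations $\oplus$, $\circ$, so the result transports from Theorem \ref{T:gyrolines1} — but you organize it quite differently, and in a way that buys something. The paper does not prove the gyroline half at all: it cites Theorem 4.2 of \cite{Kim16} for $L(t;\rho,\sigma)$ and only computes the cogyroline, by showing $(-1)\star\rho \boxdot \sigma$ is the normalization of $(-1)\circ\rho \boxplus \sigma$, which requires invoking the even property of gyrations, $\gyr[\ominus a, \ominus b]=\gyr[a,b]$ (Theorem 2.34 in \cite{Un08}), to pass from $\gyr[(-1)\star\rho,(-1)\star\sigma]$ to $\gyr[\rho,\sigma]$. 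You instead make the normalization $\pi(A)=A/\tr(A)$ an explicit morphism of the two gyrovector structures: once the two intertwining identities $\pi(A)\odot\pi(B)=\pi(A\oplus B)$ and $t\star\pi(A)=\pi(t\circ A)$ are checked (both are one-line computations, using $\tr(A^{1/2}BA^{1/2})=\tr(AB)$), the gyration and cooperation intertwine automatically because $\gyr[a,b]c=\ominus(a\oplus b)\oplus[a\oplus(b\oplus c)]$ and $a\boxplus b=a\oplus\gyr[a,\ominus b]b$ are built from $\oplus$ and $\ominus=(-1)\otimes(\cdot)$ alone, so both the gyroline and cogyroline formulas fall out of Theorem \ref{T:gyrolines1} by applying $\pi$ term by term. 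This makes your argument self-contained (no appeal to \cite{Kim16}) and uniform over the two halves, and it sidesteps the even property entirely; your closing caution about whether the gyration on $\mathbb{D}_n$ is the $\pi$-image of the cone gyration is resolved exactly as you say, since the gyration is determined by the operation, with the trace-invariance of unitary conjugation in \eqref{E:gyration} giving an independent check.
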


\begin{proof}
Let $\rho, \sigma \in (\mathbb{D}_{n}, \odot, \star)$ and $t \in [0,1]$. The formula of $L(t; \rho, \sigma)$ has been shown in \cite[Theorem 4.2]{Kim16}. Note that
\begin{displaymath}
\begin{split}
(-1) \star \rho \boxdot \sigma & = (-1) \star \rho \odot \gyr[(-1) \star \rho, (-1) \star \sigma] \sigma = (-1) \star \rho \odot \gyr[\rho, \sigma] \sigma \\
& = \frac{(-1) \circ \rho \boxplus \sigma}{\tr \left( (-1) \circ \rho \boxplus \sigma \right)}.
\end{split}
\end{displaymath}
The first equality follows from Definition \ref{D:cooperation} and the second follows from Theorem 2.34 in \cite{Un08}.
Thus, by the formula of $L^{c}(t; A, B)$ in Theorem \ref{T:gyrolines1} we obtain
\begin{displaymath}
\displaystyle L^{c}(t; \rho, \sigma) = t \star [(-1) \star \rho \boxdot \sigma] \odot \rho = \frac{t \circ [(-1) \circ \rho \boxplus \sigma] \oplus \rho}{\tr (t \circ [(-1) \circ \rho \boxplus \sigma] \oplus \rho)} = \frac{\rho \natural_{t} \sigma}{\tr (\rho \natural_{t} \sigma)}.
\end{displaymath}
\end{proof}

\section{Special cases: $2 \times 2$ positive definite matrices}

We compute in this section the weighted geometric means of $2 \times 2$ positive definite matrices as the special cases, but important examples in the theoretical and applied areas.

\subsection{$2 \times 2$ positive definite matrices with determinant one}

For any $A, B \in \mathbb{P}_{2}$ with determinant $1$, we can write the geometric mean of $A$ and $B$ to linear combination of $A$ and $B$. For this goal we introduce a map $L_{f}: (0, \infty) \to \mathbb{R}$ constructed by a differentiable function $f: (0, \infty) \to \mathbb{R}$:
\begin{displaymath}
L_{f}(x) =
\left\{
  \begin{array}{ll}
    \displaystyle \frac{f(x) - f(x^{-1})}{x - x^{-1}}, & \hbox{$x \neq 1$;} \\
    f'(1), & \hbox{$x = 1$.}
  \end{array}
\right.
\end{displaymath}
For $f(x) = x^{t}$ we simply write $L_{f}(x)$ as $L_{t}(x)$.

\begin{lemma} \label{L:2-mean}
For any $A, B \in \mathbb{P}_{2}$ with determinant $1$,
\begin{displaymath}
A \#_{t} B = L_{1-t}(\lambda) A + L_{t}(\lambda) B, \ t \in [0,1]
\end{displaymath}
where $\lambda$ is an eigenvalue of $A B^{-1}$.
\end{lemma}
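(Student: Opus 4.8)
The plan is to collapse the whole problem onto the single matrix $C := A^{-1/2} B A^{-1/2}$ and exploit that a $2 \times 2$ matrix is governed by its two eigenvalues. First I would record that $C \in \mathbb{P}_{2}$ and, since $\det A = \det B = 1$, that $\det C = 1$; hence the two positive eigenvalues of $C$ are $\lambda$ and $\lambda^{-1}$ for some $\lambda > 0$. These coincide with the eigenvalues of $A B^{-1}$: indeed $C$ is similar to $A^{-1} B$ (conjugate by $A^{1/2}$), and $A^{-1} B$ has the same spectrum as $A B^{-1}$ since $XY$ and $YX$ share eigenvalues. Thus the $\lambda$ of the statement may be taken to be an eigenvalue of $C$.

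The heart of the argument is to write $C^{t}$ as a linear combination of $C$ and $I$. When $\lambda \neq 1$ the eigenvalues $\lambda, \lambda^{-1}$ are distinct, so by the functional calculus (equivalently, Lagrange interpolation on two points) there are scalars $\alpha, \beta$ with $C^{t} = \alpha C + \beta I$, determined by the scalar equations $\lambda^{t} = \alpha \lambda + \beta$ and $\lambda^{-t} = \alpha \lambda^{-1} + \beta$. Subtracting these gives $\alpha = L_{t}(\lambda)$ at once; multiplying the two equations by $\lambda^{-1}$ and by $\lambda$ respectively and then subtracting yields $\beta = L_{1-t}(\lambda)$. The degenerate case $\lambda = 1$ forces $C = I$, since a positive definite matrix with repeated eigenvalue $1$ is the identity, and hence $A = B$; here $C^{t} = I = L_{1-t}(1) \, C + L_{t}(1) \, I$, because the branch $L_{f}(1) = f'(1)$ gives $L_{t}(1) = t$ and $L_{1-t}(1) = 1-t$, so the formula persists.

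Finally I would push the identity $C^{t} = L_{t}(\lambda) \, C + L_{1-t}(\lambda) \, I$ through the congruence $X \mapsto A^{1/2} X A^{1/2}$. Using $A^{1/2} C A^{1/2} = B$ and $A^{1/2} I A^{1/2} = A$, this produces
\begin{displaymath}
A \#_{t} B = A^{1/2} C^{t} A^{1/2} = L_{t}(\lambda) \, B + L_{1-t}(\lambda) \, A,
\end{displaymath}
which is the asserted formula. I would also note that $L_{t}$ is invariant under $\lambda \mapsto \lambda^{-1}$, so the right-hand side is independent of which of the two reciprocal eigenvalues is named $\lambda$, making the statement well posed.

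The only genuine obstacle is the explicit solution of the $2 \times 2$ system, specifically recognizing the constant coefficient as $L_{1-t}(\lambda)$ rather than some less symmetric expression, together with the clean handling of the coincident-eigenvalue case through the $f'(1)$ branch of $L_{f}$; the remaining steps (the determinant bookkeeping, the identification of spectra, and the congruence) are routine.
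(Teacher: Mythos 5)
Your proof is correct, but it is genuinely different from the paper's. The paper argues indirectly through geodesic uniqueness: it cites Lim's result that $2\times 2$ positive definite matrices are joined by a \emph{unique} minimal geodesic for the Thompson metric, then cites Corach--Porta--Recht/Lawson--Lim to say $t \mapsto A \#_{t} B$ is such a geodesic and Nussbaum to say $t \mapsto L_{1-t}(\lambda) A + L_{t}(\lambda) B$ is also such a geodesic, and concludes the two curves coincide. Your argument is instead a self-contained computation: reduce to $C = A^{-1/2} B A^{-1/2}$ with reciprocal eigenvalues $\lambda, \lambda^{-1}$, express $C^{t}$ by two-point Lagrange interpolation as $L_{t}(\lambda) C + L_{1-t}(\lambda) I$, and push through the congruence by $A^{1/2}$. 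What your route buys is elementarity and completeness — no external machinery, and the degenerate case $A = B$ is handled explicitly via the $f'(1)$ branch, whereas the paper simply assumes $A \neq B$; what the paper's route buys is the geometric explanation of why a linear pencil of matrices can equal the weighted geometric mean at all (both are Thompson geodesics), which is the conceptual point behind Nussbaum's formula. One small imprecision in your write-up: the $XY/YX$ trick by itself shows $A^{-1}B$ is cospectral with $BA^{-1} = (AB^{-1})^{-1}$, not with $AB^{-1}$; you need the determinant-one symmetry $\{\lambda, \lambda^{-1}\} = \{\lambda^{-1}, \lambda\}$ to identify the spectra — but your closing observation that $L_{t}(\lambda) = L_{t}(\lambda^{-1})$ makes the conclusion independent of this choice, so nothing breaks.
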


\begin{proof}
Assume that $A \neq B$ for $A, B \in \mathbb{P}_{2}$ with determinant $1$. It has been shown from \cite{Lim13} that there exists a unique minimal geodesic connecting $A$ and $B$ for the Thompson metric $d_{T}$ on $\mathbb{P}_{2}$. Note from \cite{CPR, LL07} that $t \mapsto A \#_{t} B$ is a minimal geodesic for the Thompson metric, and from \cite{Nuss} that $t \mapsto L_{1-t}(\lambda) A + L_{t}(\lambda) B$ is a minimal geodesic for the Thompson metric. By uniqueness, we obtain the desired identity.
\end{proof}

\begin{remark}
It is known from \cite[Proposition 4.1.12]{Bh} that
\begin{displaymath}
A \# B = \frac{A + B}{\sqrt{\det (A + B)}}
\end{displaymath}
for any $A, B \in \mathbb{P}_{2}$ with determinant $1$. So by Lemma \ref{L:2-mean} we have
\begin{displaymath}
L_{\frac{1}{2}}(\lambda) = \frac{1}{\sqrt{\det (A + B)}},
\end{displaymath}
where $\lambda$ is an eigenvalue of $A B^{-1}$.
\end{remark}

\begin{lemma} \cite{An} \label{L:norm}
For any $A, B, X \in B(\mathcal{H})$,
\begin{displaymath}
\left(
  \begin{array}{cc}
    A & X \\
    X^{*} & B \\
  \end{array}
\right) \geq 0,
\end{displaymath}
implies $\Vert B \Vert \leq \sqrt{\Vert A \Vert \cdot \Vert C \Vert}$ for operator norm $\Vert \cdot \Vert$.
\end{lemma}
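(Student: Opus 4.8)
The plan is to prove the standard estimate that the off-diagonal block of a positive block operator is controlled by the geometric mean of the norms of the two diagonal blocks; in the notation of the statement this should read $\Vert X \Vert \le \sqrt{\Vert A \Vert \cdot \Vert B \Vert}$. (The conclusion as printed contains a typographical carryover from the source: the quantity being bounded is the off-diagonal entry $X$, not $B$, and the symbol $C$ stands for the bottom-right block, here called $B$.) First I would record the easy half-step that positivity of the block matrix forces each diagonal block to be positive semi-definite: testing the defining inequality on vectors of the form $u \oplus 0$ and $0 \oplus v$ in $\mathcal{H} \oplus \mathcal{H}$ gives $A \ge 0$ and $B \ge 0$, whence $\langle u, A u \rangle \le \Vert A \Vert \, \Vert u \Vert^{2}$ and $\langle v, B v \rangle \le \Vert B \Vert \, \Vert v \Vert^{2}$.

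The heart of the argument is to collapse the operator inequality into a scalar quadratic form. Fixing unit vectors $u, v \in \mathcal{H}$, I would apply the positivity of the block matrix to the test vector $s u \oplus t v$ with real parameters $s, t$. Expanding the inner product and using $\langle v, X^{*} u \rangle = \overline{\langle u, X v \rangle}$ produces the scalar inequality $s^{2} \langle u, A u \rangle + 2 s t \, \mathrm{Re}\langle u, X v \rangle + t^{2} \langle v, B v \rangle \ge 0$ for all real $s, t$. Before carrying this out I would replace $u$ by a unit-modulus phase multiple of itself so that $\langle u, X v \rangle$ becomes real and nonnegative, i.e. equal to $\vert \langle u, X v \rangle \vert$; this phase normalization is the one place where the complex (as opposed to real) Hilbert-space structure must be handled with care, and it is the only genuine subtlety in the proof.

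With $a = \langle u, A u \rangle$, $b = \langle v, B v \rangle$, and $c = \vert \langle u, X v \rangle \vert$, nonnegativity of $s^{2} a + 2 s t\, c + t^{2} b$ for all real $s, t$ is exactly positive semi-definiteness of $\left( \begin{smallmatrix} a & c \\ c & b \end{smallmatrix} \right)$, so its determinant is nonnegative: $c^{2} \le a b$. Combining with the first step yields $\vert \langle u, X v \rangle \vert^{2} \le \langle u, A u \rangle \langle v, B v \rangle \le \Vert A \Vert \, \Vert B \Vert$. Taking the supremum over all unit $u, v$ and invoking $\Vert X \Vert = \sup_{\Vert u \Vert = \Vert v \Vert = 1} \vert \langle u, X v \rangle \vert$ gives the claimed bound $\Vert X \Vert \le \sqrt{\Vert A \Vert \cdot \Vert B \Vert}$. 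No step beyond the phase normalization presents any difficulty, so I expect the write-up to be short.
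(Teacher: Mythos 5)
Your proposal is correct, and you were right to flag the statement's typographical slip: the intended conclusion is $\Vert X \Vert \leq \sqrt{\Vert A \Vert \cdot \Vert B \Vert}$ (the off-diagonal block bounded by the geometric mean of the diagonal blocks), with the paper's $C$ being a carryover of the notation in the cited source for the bottom-right block. Note, however, that the paper does not prove this lemma at all; it is imported verbatim from Ando's paper \cite{An} (``Geometric mean and norm Schwarz inequality''), so there is no internal argument to compare yours against. What you have supplied is a complete, self-contained, and elementary proof: test the positivity on vectors $s u \oplus t v$, normalize the phase of $u$ so that $\langle u, X v \rangle = \vert \langle u, X v \rangle \vert$, read off positive semi-definiteness of the scalar $2 \times 2$ matrix $\left( \begin{smallmatrix} a & c \\ c & b \end{smallmatrix} \right)$ with $a = \langle u, A u \rangle$, $b = \langle v, B v \rangle$, $c = \vert \langle u, X v \rangle \vert$, and take the supremum using $\Vert X \Vert = \sup_{\Vert u \Vert = \Vert v \Vert = 1} \vert \langle u, X v \rangle \vert$. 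All steps are sound, including the edge cases (the determinant inequality $c^{2} \leq a b$ holds even when $a = 0$ or $b = 0$, since PSD-ness of the scalar matrix is exactly nonnegativity of the quadratic form). Ando's own route to this inequality goes through the norm Schwarz inequality for the geometric mean, which is heavier machinery; your quadratic-form argument buys transparency and requires nothing beyond the definition of positivity on $\mathcal{H} \oplus \mathcal{H}$, which is well suited to the way the lemma is actually used in the paper (Corollary \ref{C:norm}, where it is applied with $X = A \# B$).
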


Applying Lemma \ref{L:norm} to $\mathbb{P}_{2}$ we obtain some interesting inequalities.

\begin{corollary} \label{C:norm}
Let $A, B \in \mathbb{P}_{2}$ with determinants $1$. Then
\begin{displaymath}
\Vert A + B \Vert \leq \sqrt{ \det (A + B) \Vert A \Vert \cdot \Vert B \Vert }.
\end{displaymath}
\end{corollary}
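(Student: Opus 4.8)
The plan is to combine the block-matrix characterization of the geometric mean in \eqref{E:geomean} with Lemma \ref{L:norm} and the explicit determinant-one formula for $A \# B$ recorded in the Remark following Lemma \ref{L:2-mean}. The whole argument is a short chain of substitutions; the only genuine content is recognizing that $A \# B$ sits naturally as the off-diagonal block of a positive semi-definite $2 \times 2$ operator matrix.

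First I would observe that, by \eqref{E:geomean}, the geometric mean $A \# B$ is the maximal self-adjoint $X$ for which the block matrix is positive semi-definite; in particular, choosing $X = A \# B$ itself (which is self-adjoint, so $X^{*} = X$) gives
\[
\left(
  \begin{array}{cc}
    A & A \# B \\
    A \# B & B \\
  \end{array}
\right) \geq 0.
\]
Applying Lemma \ref{L:norm} to this positive block matrix, with $A \# B$ playing the role of the off-diagonal entry, yields the norm bound $\Vert A \# B \Vert \leq \sqrt{\Vert A \Vert \cdot \Vert B \Vert}$. This step uses only that the off-diagonal block of a positive block matrix is controlled in norm by the geometric mean of the norms of the diagonal blocks.

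Next I would invoke the explicit formula on $\mathbb{P}_{2}$ for matrices of determinant one, namely $A \# B = (A + B)/\sqrt{\det(A + B)}$. Since the operator norm is positively homogeneous, this gives $\Vert A \# B \Vert = \Vert A + B \Vert / \sqrt{\det(A + B)}$. Substituting into the bound above and clearing the denominator produces
\[
\Vert A + B \Vert \leq \sqrt{\det(A + B)} \cdot \sqrt{\Vert A \Vert \cdot \Vert B \Vert} = \sqrt{ \det(A + B) \, \Vert A \Vert \cdot \Vert B \Vert },
\]
which is the claimed inequality. There is essentially no obstacle here: the two facts to be careful about are that \eqref{E:geomean} genuinely licenses placing $A \# B$ in the off-diagonal slot (rather than merely bounding it from above), and that the determinant-one normalization is in force so that the closed-form expression for $A \# B$ applies verbatim.
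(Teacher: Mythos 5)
Your proposal is correct and follows exactly the paper's own route: positivity of the block matrix $\bigl(\begin{smallmatrix} A & A\#B \\ A\#B & B \end{smallmatrix}\bigr)$ coming from \eqref{E:geomean}, Lemma \ref{L:norm} to bound $\Vert A \# B \Vert \leq \sqrt{\Vert A \Vert \cdot \Vert B \Vert}$, and then the determinant-one formula $A \# B = (A+B)/\sqrt{\det(A+B)}$ from Proposition 4.1.12 of \cite{Bh}. Your write-up merely spells out the substitution steps that the paper leaves implicit (and correctly reads the off-diagonal bound intended in Lemma \ref{L:norm}, whose statement contains a typo), so there is nothing to add.
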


\begin{proof}
Since
\begin{displaymath}
\left(
  \begin{array}{cc}
    A & A \# B \\
    A \# B & B \\
  \end{array}
\right) \geq 0,
\end{displaymath}
Lemma \ref{L:norm} together with Proposition 4.1.12 in \cite{Bh} yield the desired inequality.
\end{proof}

\begin{remark}
Let $A, B \in \mathbb{P}_{2}$ with determinants $\det A = \alpha^{2}$ and $\det B = \beta^{2}$, respectively. Then replacing $A$ and $B$ by $\alpha^{-1} A$ and $\beta^{-1} B$ in Corollary \ref{C:norm} implies
\begin{displaymath}
\sqrt{\alpha \beta} \Vert \alpha^{-1} A + \beta^{-1} B \Vert \leq \left[ \det (\alpha^{-1} A + \beta^{-1} B) \Vert A \Vert \cdot \Vert B \Vert \right]^{1/2}.
\end{displaymath}
\end{remark}

We give a simple formula for the spectral geometric mean of $2 \times 2$ positive definite matrices. The following can be derived from the Cayley-Hamilton theorem for any $2 \times 2$ matrix.

\begin{lemma} \label{L:Cay-Ham}
For any $2 \times 2$ matrix $X$ and any $c \in \mathbb{R}$,
\begin{displaymath}
\det (c I + X) = c^{2} + c \tr (X) + \det (X).
\end{displaymath}
\end{lemma}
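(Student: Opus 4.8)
The plan is to verify the identity by a direct $2 \times 2$ computation, which renders the claim immediate; the reference to the Cayley--Hamilton theorem in the statement is best understood as the conceptual source of the coefficients $\tr(X)$ and $\det(X)$ rather than as a tool that must be formally invoked.

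First I would write $X = \begin{pmatrix} a & b \\ d & e \end{pmatrix}$, so that $\tr(X) = a + e$ and $\det(X) = ae - bd$. Then $cI + X = \begin{pmatrix} c + a & b \\ d & c + e \end{pmatrix}$, and expanding the $2 \times 2$ determinant gives $\det(cI + X) = (c+a)(c+e) - bd = c^{2} + c(a+e) + (ae - bd)$, which is exactly $c^{2} + c\,\tr(X) + \det(X)$. This is a single line of algebra and settles the lemma.

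Alternatively, and more in keeping with the stated hint, I would recall that for a $2 \times 2$ matrix the characteristic polynomial is $\det(\lambda I - X) = \lambda^{2} - \tr(X)\lambda + \det(X)$. Substituting $\lambda = -c$ yields $\det(-cI - X) = c^{2} + c\,\tr(X) + \det(X)$, and since $\det(-M) = \det(M)$ for every $2 \times 2$ matrix $M$, the left-hand side equals $\det(cI + X)$, producing the same identity.

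There is no genuine obstacle here: the result is purely a statement about the degree-two characteristic polynomial specialized at a real shift, and either route is a one-line verification. The only point deserving a moment's care is the sign bookkeeping in the second approach, namely ensuring that the even dimension makes $\det(-M) = \det(M)$, so that the substitution $\lambda = -c$ genuinely returns $\det(cI + X)$ and not $-(cI + X)$ with a spurious sign.
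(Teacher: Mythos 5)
Your proof is correct, and your second route (substituting $\lambda=-c$ into the characteristic polynomial $\det(\lambda I - X)=\lambda^{2}-\tr(X)\lambda+\det(X)$, using $\det(-M)=\det(M)$ in even dimension) is exactly the Cayley--Hamilton-style argument the paper gestures at, since the paper states this lemma without proof beyond that remark. The direct entrywise computation you give first is an equally valid, even more elementary verification, so there is nothing to add.
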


\begin{proposition} \label{P:2x2 sp-geomean}
Let $A, B \in \mathbb{P}_{2}$ with determinants $1$. Then
\begin{displaymath}
\displaystyle A \natural_{t} B = \frac{(A^{-1} + B)^{t} A (A^{-1} + B)^{t}}{[ 2 + \tr (A B) ]^{t}}.
\end{displaymath}
\end{proposition}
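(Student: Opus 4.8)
The plan is to reduce the whole statement to a single closed-form expression for the unweighted geometric mean $A^{-1} \# B$ and then raise it to the $t$-th power. The point is that $\det A = 1$ forces $\det A^{-1} = 1$, so the pair $A^{-1}, B$ again lies in $\mathbb{P}_{2}$ with determinant one. I may therefore apply the determinant-one formula $A \# B = (A+B)/\sqrt{\det(A+B)}$ recalled in the remark following Lemma \ref{L:2-mean} (Proposition 4.1.12 of \cite{Bh}), now with $A$ replaced by $A^{-1}$, to get
\begin{displaymath}
A^{-1} \# B = \frac{A^{-1}+B}{\sqrt{\det(A^{-1}+B)}}.
\end{displaymath}

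The key step is then to evaluate the scalar $\det(A^{-1}+B)$ explicitly. I would factor $A^{-1}+B = A^{-1}(I + AB)$, so that $\det(A^{-1}+B) = \det(A^{-1}) \det(I + AB) = \det(I + AB)$. Applying Lemma \ref{L:Cay-Ham} with $c = 1$ and $X = AB$ gives $\det(I + AB) = 1 + \tr(AB) + \det(AB)$, and since $\det(AB) = \det A \, \det B = 1$ this collapses to $2 + \tr(AB)$. Hence $A^{-1} \# B = (A^{-1}+B)/\sqrt{2 + \tr(AB)}$.

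Finally, since $1/\sqrt{2 + \tr(AB)}$ is a positive scalar and the $t$-th power of a positive scalar multiple of a positive definite matrix distributes as $(sM)^{t} = s^{t} M^{t}$, I obtain $(A^{-1} \# B)^{t} = (A^{-1}+B)^{t}/(2 + \tr(AB))^{t/2}$. Substituting this into the definition $A \natural_{t} B = (A^{-1} \# B)^{t} A (A^{-1} \# B)^{t}$ from \eqref{E:sp-geomean} and collecting the two scalar factors (whose product is $(2+\tr(AB))^{-t}$) yields the claimed identity. The computation is essentially routine; the only step that genuinely carries the content is the determinant evaluation, where the factorization $A^{-1}+B = A^{-1}(I + AB)$ combined with Lemma \ref{L:Cay-Ham} is precisely what converts the opaque geometric-mean denominator into the explicit quantity $2 + \tr(AB)$.
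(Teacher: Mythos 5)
Your proof is correct and follows essentially the same route as the paper's: both apply Bhatia's determinant-one formula to $A^{-1} \# B$, substitute into the definition \eqref{E:sp-geomean}, and evaluate $\det(A^{-1}+B)$ via the factorization $A^{-1}+B = A^{-1}(I+AB)$ together with Lemma \ref{L:Cay-Ham}. No discrepancies to report.
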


\begin{proof}
It has been proved in \cite[Proposition 4.1.12]{Bh} that
\begin{displaymath}
\displaystyle A \# B = \frac{A + B}{\sqrt{\det (A + B)}}.
\end{displaymath}
Since $\det(A^{-1}) = 1$, we obtain from the equation \eqref{E:sp-geomean}
\begin{displaymath}
\displaystyle A \natural_{t} B = \left[ \frac{A^{-1} + B}{\sqrt{\det(A^{-1} + B)}} \right]^{t} A \left[ \frac{A^{-1} + B}{\sqrt{\det(A^{-1} + B)}} \right]^{t} \\
= \frac{(A^{-1} + B)^{t} A (A^{-1} + B)^{t}}{\det(A^{-1} + B)^{t}}.
\end{displaymath}
Since $\det(A^{-1} + B) = \det(A^{-1}(I + A B)) = \det(I + A B) = 2 + \tr (A B)$ by Lemma \ref{L:Cay-Ham}, the proof is complete.
\end{proof}

\begin{corollary}
Let $A, B \in \mathbb{P}_{2}$ such that $\det(A) = \alpha^{2}$ and $\det(B) = \beta^{2}$ for some $\alpha, \beta > 0$. Then
\begin{displaymath}
\displaystyle A \natural_{t} B = \left[ \frac{\alpha \beta}{(\alpha \beta)^{2} + 1 + \alpha \beta \tr (A B)} \right]^{t} (\alpha \beta A^{-1} + B)^{t} A (\alpha \beta A^{-1} + B)^{t}.
\end{displaymath}
\end{corollary}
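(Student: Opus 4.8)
The plan is to reduce to the determinant-one case of Proposition \ref{P:2x2 sp-geomean} by normalizing each factor. Put $A' = \alpha^{-1} A$ and $B' = \beta^{-1} B$; then $\det A' = \det B' = 1$, so $A', B' \in \mathbb{P}_{2}$ satisfy the hypotheses of that proposition. The homogeneity of the weighted spectral geometric mean recorded in Lemma \ref{L:SG} (2) gives
\[
A \natural_{t} B = (\alpha A') \natural_{t} (\beta B') = \alpha^{1-t}\beta^{t}\,(A' \natural_{t} B'),
\]
so it remains to insert the determinant-one formula for $A' \natural_{t} B'$ and to keep track of the scalar factors.

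Next I would rewrite Proposition \ref{P:2x2 sp-geomean} for $A', B'$ back in terms of $A, B$. Since $(A')^{-1} = \alpha A^{-1}$ and $B' = \beta^{-1} B$, we have $(A')^{-1} + B' = \beta^{-1}(\alpha\beta A^{-1} + B)$, and because these are scalar multiples the factor $\beta^{-1}$ leaves each $t$-power as $\beta^{-t}$ on either side; together with $A' = \alpha^{-1} A$ and $\tr(A'B') = (\alpha\beta)^{-1}\tr(AB)$, this expresses $A' \natural_{t} B'$ as an explicit scalar times $(\alpha\beta A^{-1} + B)^{t}\,A\,(\alpha\beta A^{-1} + B)^{t}$. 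Multiplying back by $\alpha^{1-t}\beta^{t}$ should collapse the accumulated powers of $\alpha$ and $\beta$ into the single coefficient of the statement, with the conjugation left untouched.

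The only genuine computation is the scalar. Factoring $\alpha\beta A^{-1} + B = A^{-1}(\alpha\beta I + AB)$ and invoking Lemma \ref{L:Cay-Ham} with $c = \alpha\beta$ and $X = AB$ gives
\[
\det(\alpha\beta A^{-1} + B) = (\det A)^{-1}\bigl[(\alpha\beta)^{2} + \alpha\beta\,\tr(AB) + \det(AB)\bigr],
\]
where $\det A = \alpha^{2}$ and $\det(AB) = \alpha^{2}\beta^{2}$. Feeding this, together with the denominator $[2 + \tr(A'B')]^{t}$ coming from Proposition \ref{P:2x2 sp-geomean}, through the prefactor $\alpha^{1-t}\beta^{t}$ produces the coefficient in front of the conjugation. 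I expect the main obstacle to be purely bookkeeping: powers of $\alpha$ and $\beta$ enter from $(A')^{-1} + B'$, from $A'$, from the normalization $\alpha^{1-t}\beta^{t}$, and from the determinant, and the argument succeeds precisely when these telescope to leave a single scalar multiplying $(\alpha\beta A^{-1}+B)^{t}\,A\,(\alpha\beta A^{-1}+B)^{t}$.
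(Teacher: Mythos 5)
Your strategy is exactly the paper's: normalize $A' = \alpha^{-1}A$, $B' = \beta^{-1}B$ to determinant one, apply Proposition \ref{P:2x2 sp-geomean}, pull the scalars back out with the homogeneity in Lemma \ref{L:SG} (2), and evaluate a determinant with Lemma \ref{L:Cay-Ham}. However, the step you defer as ``purely bookkeeping'' is precisely where the argument cannot close, because the coefficient in the statement is not the one your (correct) computation produces. Your determinant identity correctly uses $\det(AB) = \alpha^{2}\beta^{2}$, so Lemma \ref{L:Cay-Ham} gives $\det(\alpha\beta I + AB) = (\alpha\beta)^{2} + \alpha\beta \tr(AB) + (\alpha\beta)^{2} = 2(\alpha\beta)^{2} + \alpha\beta\tr(AB)$. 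Carrying your plan through: since $(A')^{-1}+B' = \beta^{-1}(\alpha\beta A^{-1}+B)$ and $\tr(A'B') = (\alpha\beta)^{-1}\tr(AB)$, Proposition \ref{P:2x2 sp-geomean} yields
\begin{displaymath}
A' \natural_{t} B' = \frac{\beta^{-2t}\alpha^{-1}(\alpha\beta A^{-1}+B)^{t} A (\alpha\beta A^{-1}+B)^{t}}{[\,2+(\alpha\beta)^{-1}\tr(AB)\,]^{t}} = \alpha^{t-1}\beta^{-t}\,\frac{(\alpha\beta A^{-1}+B)^{t} A (\alpha\beta A^{-1}+B)^{t}}{[\,2\alpha\beta+\tr(AB)\,]^{t}},
\end{displaymath}
and multiplying by the prefactor $\alpha^{1-t}\beta^{t}$ gives
\begin{displaymath}
A \natural_{t} B = \left[ \frac{1}{2\alpha\beta+\tr(AB)} \right]^{t}(\alpha\beta A^{-1}+B)^{t} A (\alpha\beta A^{-1}+B)^{t} = \left[ \frac{\alpha\beta}{2(\alpha\beta)^{2}+\alpha\beta\tr(AB)} \right]^{t}(\alpha\beta A^{-1}+B)^{t} A (\alpha\beta A^{-1}+B)^{t}.
\end{displaymath}
This agrees with the stated coefficient $\left[ \alpha\beta/((\alpha\beta)^{2}+1+\alpha\beta\tr(AB)) \right]^{t}$ only when $(\alpha\beta)^{2}=1$.

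The discrepancy is not an error in your method: the corollary as stated (and the last line of the paper's own proof) substitutes $\det(AB)=1$, which is valid only in the determinant-one case, instead of $\det(AB)=(\alpha\beta)^{2}$. A quick sanity check with $A=\alpha I$, $B=\beta I$ confirms this: there $A\natural_{t}B = A^{1-t}B^{t} = \alpha^{1-t}\beta^{t} I$, which the corrected coefficient reproduces, while the stated formula fails unless $\alpha\beta=1$. So your proposal, executed faithfully, proves the corrected identity above; it cannot prove the statement as written, and your expectation that the accumulated powers telescope ``into the single coefficient of the statement'' is exactly the point that breaks down.
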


\begin{proof}
By Proposition \ref{P:2x2 sp-geomean} and Lemma \ref{L:Cay-Ham}
\begin{displaymath}
\begin{split}
\displaystyle \left( \frac{A}{\alpha} \right) \natural_{t} \left( \frac{B}{\beta} \right)
& = \frac{(\alpha A^{-1} + \beta^{-1} B)^{t} \alpha^{-1} A (\alpha A^{-1} + \beta^{-1} B)^{t}}{\det(I + (\alpha \beta)^{-1} A B)^{t}} \\
& = \frac{\alpha^{2t-1} (\alpha \beta A^{-1} + B)^{t} A (\alpha \beta A^{-1} + B)^{t}}{\det(\alpha \beta I + A B)^{t}} \\
& = \frac{\alpha^{2t-1} (\alpha \beta A^{-1} + B)^{t} A (\alpha \beta A^{-1} + B)^{t}}{[ (\alpha \beta)^{2} + 1 + \alpha \beta \tr (A B)]^{t}}
\end{split}
\end{displaymath}
Since $\displaystyle \left( \frac{A}{\alpha} \right) \natural_{t} \left( \frac{B}{\beta} \right) = \alpha^{t-1} \beta^{-t} A \natural_{t} B$ by Lemma \ref{L:SG} (2), it leads our result.
\end{proof}

\subsection{$2 \times 2$ invertible density matrices}

In quantum information theory, a physical state can be described as a density matrix n a complex Hilbert space, which is a positive semi-definite Hermitian matrix with trace $1$. In particular, a qubit state is the $2 \times 2$ density matrix whose form is given by
\begin{displaymath}
\rho_{\mathbf{v}} = \frac{1}{2} \left(
\begin{array}{cc}
  1 + v_{3} & v_{1} - i v_{2} \\
  v_{1} + i v_{2} & 1 - v_{3} \\
\end{array}
\right),
\end{displaymath}
where $\mathbf{v} = ( v_{1} \ v_{2} \ v_{3} )^{T} \in \mathbb{R}^{3}$. So the qubit state $\rho_{\mathbf{v}}$ is parameterized by some $\mathbf{v} \in \mathbb{R}^{3}$ such that $\Vert \mathbf{v} \Vert \leq 1$. In this case the vector $\mathbf{v}$ is known as the Bloch vector or Bloch vector representation of the
state $\rho_{\mathbf{v}}$.

\begin{remark} \label{R:eig}
Via a characteristic equation of the qubit state $\rho_{\mathbf{v}}$, we obtain that
its eigenvalues are
\begin{displaymath}
\frac{1 + \Vert \mathbf{v} \Vert}{2}, \ \frac{1 - \Vert \mathbf{v} \Vert}{2},
\end{displaymath}
and its determinant is
\begin{displaymath}
\det{\rho_{\mathbf{v}}} = \frac{1 - \Vert \mathbf{v} \Vert^{2}}{4} = \frac{1}{4 \gamma_{\mathbf{v}}^{2}}.
\end{displaymath}
So the mixed states are parameterized by the open unit ball $\mathbf{B}$ in $\mathbb{R}^{3}$, meanwhile the pure states are parameterized by the boundary of $\mathbf{B}$, the unit sphere (Bloch sphere). See \cite{Kim, Un08} for more information.
\end{remark}

It has been shown in \cite{Kim, Kim16} that the map
\begin{displaymath}
\rho : (\mathbf{B}, \oplus_{E}, \otimes) \to (\mathbb{D}_{2}, \odot, \star), \ \mathbf{v} \mapsto \rho_{\mathbf{v}}
\end{displaymath}
is an isomorphism between two gyrovector spaces. On the gyrovector space $\mathbb{D}_{2}$, moreover, we have that the identity is $\displaystyle \frac{1}{2} I_{2}$ and the inverse for $\rho_{\mathbf{u}}$ is
\begin{equation} \label{E:inverse}
\displaystyle \rho_{- \mathbf{u}} = \frac{1}{4 \gamma_{\mathbf{u}}}
\rho_{\mathbf{u}}^{-1}.
\end{equation}

\begin{proposition} \label{P:2-mean}
For any $\mathbf{u}, \mathbf{v} \in \mathbf{B}$
\begin{displaymath}
\rho_{\mathbf{u}} \#_{t} \rho_{\mathbf{v}} = L_{1-t}(\mu) \left( \frac{\gamma_{\mathbf{u}}}{\gamma_{\mathbf{v}}} \right)^{t} \rho_{\mathbf{u}} + L_{t}(\mu) \left( \frac{\gamma_{\mathbf{v}}}{\gamma_{\mathbf{u}}} \right)^{1-t} \rho_{\mathbf{v}}, \ t \in [0,1]
\end{displaymath}
where $\mu = \gamma_{\mathbf{u}} (1 - \mathbf{u}^{T} \mathbf{v}) (1 \pm \Vert \mathbf{u} \oplus_{E} (- \mathbf{v}) \Vert)$.
\end{proposition}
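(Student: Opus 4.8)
The plan is to reduce to the determinant-one case already handled by Lemma \ref{L:2-mean} via the scaling law of the weighted geometric mean, and then to identify the required eigenvalue $\mu$ through the Bloch-vector (Pauli) representation together with the relativistic gamma-addition identity. First I would normalize: by Remark \ref{R:eig} we have $\det \rho_{\mathbf{u}} = \frac{1}{4\gamma_{\mathbf{u}}^{2}}$, so $A := 2\gamma_{\mathbf{u}} \rho_{\mathbf{u}}$ and $B := 2\gamma_{\mathbf{v}} \rho_{\mathbf{v}}$ are positive definite matrices of determinant $1$. From the defining curve \eqref{E:geodesic} one reads off the homogeneity $(aA) \#_{t} (bB) = a^{1-t} b^{t} (A \#_{t} B)$ for $a, b > 0$; applying it with $a = (2\gamma_{\mathbf{u}})^{-1}$ and $b = (2\gamma_{\mathbf{v}})^{-1}$ gives $\rho_{\mathbf{u}} \#_{t} \rho_{\mathbf{v}} = (2\gamma_{\mathbf{u}})^{-(1-t)} (2\gamma_{\mathbf{v}})^{-t} (A \#_{t} B)$.

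Next I would invoke Lemma \ref{L:2-mean} to write $A \#_{t} B = L_{1-t}(\mu) A + L_{t}(\mu) B = 2\gamma_{\mathbf{u}} L_{1-t}(\mu) \rho_{\mathbf{u}} + 2\gamma_{\mathbf{v}} L_{t}(\mu) \rho_{\mathbf{v}}$, where $\mu$ is an eigenvalue of $A B^{-1}$. Substituting and simplifying the scalar prefactors is then routine: the coefficient of $\rho_{\mathbf{u}}$ collapses to $(2\gamma_{\mathbf{u}})^{t}(2\gamma_{\mathbf{v}})^{-t} L_{1-t}(\mu) = (\gamma_{\mathbf{u}}/\gamma_{\mathbf{v}})^{t} L_{1-t}(\mu)$ and that of $\rho_{\mathbf{v}}$ to $(2\gamma_{\mathbf{u}})^{-(1-t)}(2\gamma_{\mathbf{v}})^{1-t} L_{t}(\mu) = (\gamma_{\mathbf{v}}/\gamma_{\mathbf{u}})^{1-t} L_{t}(\mu)$, which is exactly the claimed linear combination. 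So the entire content of the proposition, apart from the bookkeeping above, is the evaluation of $\mu$.

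It then remains to compute $\mu$. Since $\det(A B^{-1}) = 1$, the two eigenvalues are reciprocal, say $\mu$ and $\mu^{-1}$, and $\mu + \mu^{-1} = \tr(A B^{-1})$; because $L_{s}(x) = L_{s}(x^{-1})$ for every $s$, either eigenvalue may be used, which is precisely the role of the ambiguous sign in the statement. To get the trace I would write $\rho_{\mathbf{u}} = \frac{1}{2}(I + \mathbf{u} \cdot \sigma)$ and $\rho_{\mathbf{v}}^{-1} = 2\gamma_{\mathbf{v}}^{2}(I - \mathbf{v} \cdot \sigma)$ in Pauli-matrix form (or compute directly from the $2 \times 2$ entries), use $(\mathbf{u} \cdot \sigma)(\mathbf{v} \cdot \sigma) = (\mathbf{u}^{T} \mathbf{v}) I + i(\mathbf{u} \times \mathbf{v}) \cdot \sigma$ and the tracelessness of the $\sigma_{j}$, obtaining $\tr(\rho_{\mathbf{u}} \rho_{\mathbf{v}}^{-1}) = 2\gamma_{\mathbf{v}}^{2}(1 - \mathbf{u}^{T} \mathbf{v})$. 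Hence $\tr(A B^{-1}) = \frac{\gamma_{\mathbf{u}}}{\gamma_{\mathbf{v}}} \tr(\rho_{\mathbf{u}} \rho_{\mathbf{v}}^{-1}) = 2\gamma_{\mathbf{u}} \gamma_{\mathbf{v}}(1 - \mathbf{u}^{T} \mathbf{v})$, and solving $\mu^{2} - \tr(A B^{-1}) \mu + 1 = 0$ gives $\mu = \gamma_{\mathbf{u}} \gamma_{\mathbf{v}}(1 - \mathbf{u}^{T} \mathbf{v}) \pm \sqrt{[\gamma_{\mathbf{u}} \gamma_{\mathbf{v}}(1 - \mathbf{u}^{T} \mathbf{v})]^{2} - 1}$.

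The final and most delicate step is to recognize the discriminant as a Bloch norm. Here I would invoke the relativistic gamma-addition identity $\gamma_{\mathbf{u} \oplus_{E} \mathbf{v}} = \gamma_{\mathbf{u}} \gamma_{\mathbf{v}}(1 + \mathbf{u}^{T} \mathbf{v})$, which specializes to $\gamma_{\mathbf{u} \oplus_{E} (-\mathbf{v})} = \gamma_{\mathbf{u}} \gamma_{\mathbf{v}}(1 - \mathbf{u}^{T} \mathbf{v})$ since $\gamma_{-\mathbf{v}} = \gamma_{\mathbf{v}}$. Writing $G := \gamma_{\mathbf{u} \oplus_{E} (-\mathbf{v})}$ and $w := \Vert \mathbf{u} \oplus_{E} (-\mathbf{v}) \Vert$, the relation $G = (1 - w^{2})^{-1/2}$ forces $\sqrt{G^{2} - 1} = G w$, whence $\mu = G(1 \pm w) = \gamma_{\mathbf{u}} \gamma_{\mathbf{v}}(1 - \mathbf{u}^{T} \mathbf{v})(1 \pm \Vert \mathbf{u} \oplus_{E} (-\mathbf{v}) \Vert)$. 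This reproduces the stated $\mu$ up to the factor $\gamma_{\mathbf{v}}$, whose presence I would reconcile against the claimed expression; note also that $\mu_{+} \mu_{-} = G^{2}(1 - w^{2}) = 1$, confirming that the two signs do give reciprocal eigenvalues. The principal obstacle is exactly this last identification, namely coaxing the gamma-addition identity into collapsing the square-root discriminant into the clean product form $1 \pm w$; the degenerate case $\mathbf{u} = \mathbf{v}$ (where $\mu = 1$) is then absorbed by the $x = 1$ branch of $L_{f}$ and a continuity argument.
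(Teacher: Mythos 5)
Your proof is correct, and its skeleton coincides with the paper's: both normalize to $A = 2\gamma_{\mathbf{u}}\rho_{\mathbf{u}}$, $B = 2\gamma_{\mathbf{v}}\rho_{\mathbf{v}}$ (determinant $1$ by Remark \ref{R:eig}), invoke the joint homogeneity of $\#_{t}$ together with Lemma \ref{L:2-mean}, and reduce everything to identifying an eigenvalue $\mu$ of $AB^{-1}$. Where you genuinely diverge is in that identification. The paper stays inside the gyrogroup formalism: it converts $\rho_{\mathbf{v}}^{-1}$ into a multiple of $\rho_{-\mathbf{v}}$ via \eqref{E:inverse}, observes that $\rho_{\mathbf{u}}\rho_{-\mathbf{v}}$ is similar to $\rho_{\mathbf{u}}^{1/2}\rho_{-\mathbf{v}}\rho_{\mathbf{u}}^{1/2} = \tr(\rho_{\mathbf{u}}\rho_{-\mathbf{v}})\,\rho_{\mathbf{u}\oplus_{E}(-\mathbf{v})}$ by the gyrogroup isomorphism $\mathbf{B}\to\mathbb{D}_{2}$, and reads off the eigenvalues of the density matrix $\rho_{\mathbf{u}\oplus_{E}(-\mathbf{v})}$ from Remark \ref{R:eig}. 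You instead compute $\tr(AB^{-1})$ by Pauli algebra, use $\det(AB^{-1})=1$ to solve $\mu^{2}-\tr(AB^{-1})\mu+1=0$, and collapse the discriminant with the identity $\gamma_{\mathbf{u}\oplus_{E}(-\mathbf{v})}=\gamma_{\mathbf{u}}\gamma_{\mathbf{v}}(1-\mathbf{u}^{T}\mathbf{v})$ and $\sqrt{G^{2}-1}=Gw$. Both routes are legitimate; yours is more elementary and self-contained, while the paper's leans on the isomorphism machinery it has already built.

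The point you left hanging, namely ``reconciling'' the extra factor $\gamma_{\mathbf{v}}$, resolves in your favor: your value $\mu = \gamma_{\mathbf{u}}\gamma_{\mathbf{v}}(1-\mathbf{u}^{T}\mathbf{v})(1\pm\Vert\mathbf{u}\oplus_{E}(-\mathbf{v})\Vert) = \gamma_{\mathbf{u}\oplus_{E}(-\mathbf{v})}(1\pm\Vert\mathbf{u}\oplus_{E}(-\mathbf{v})\Vert)$ is the correct eigenvalue of $AB^{-1}$, and the $\mu$ displayed in Proposition \ref{P:2-mean} is missing the factor $\gamma_{\mathbf{v}}$. The source of the discrepancy is \eqref{E:inverse} itself: since $\det\rho_{\mathbf{u}} = 1/(4\gamma_{\mathbf{u}}^{2})$ and $\tr(\rho_{\mathbf{u}}^{-1}) = 4\gamma_{\mathbf{u}}^{2}$, the correct relation is $\rho_{-\mathbf{u}} = \frac{1}{4\gamma_{\mathbf{u}}^{2}}\rho_{\mathbf{u}}^{-1}$, not $\frac{1}{4\gamma_{\mathbf{u}}}\rho_{\mathbf{u}}^{-1}$; consequently $AB^{-1} = 4\gamma_{\mathbf{u}}\gamma_{\mathbf{v}}\rho_{\mathbf{u}}\rho_{-\mathbf{v}}$ rather than $4\gamma_{\mathbf{u}}\rho_{\mathbf{u}}\rho_{-\mathbf{v}}$ as in the paper's proof, which is internally consistent with \eqref{E:inverse} but inherits its error. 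A one-line sanity check: for $\mathbf{u}=\mathbf{0}$ one has $AB^{-1}=B^{-1}$ with eigenvalues $\gamma_{\mathbf{v}}(1\mp\Vert\mathbf{v}\Vert)$, which is exactly what your formula gives, whereas the paper's formula gives $1\pm\Vert\mathbf{v}\Vert$. (The same correction propagates to Remark \ref{R:mu}, which then reads more cleanly as $\mu = e^{\pm d_{E}(\mathbf{u},\mathbf{v})}$ with no $1/\gamma_{\mathbf{v}}$.) Your two remaining hedges are harmless: $L_{s}(x)=L_{s}(x^{-1})$ does justify the $\pm$ ambiguity, and the excluded case $A=B$ needs no continuity argument since $A\#_{t}A = A$ and $L_{1-t}(1)+L_{t}(1)=(1-t)+t=1$.
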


\begin{proof}
Let $A = 2 \gamma_{\mathbf{u}} \rho_{\mathbf{u}}$ and $B = 2 \gamma_{\mathbf{v}} \rho_{\mathbf{v}}$. By Remark \ref{R:eig} one can see that $A, B$ are $2 \times 2$ positive definite matrices with determinant $1$. By the joint homogeneity of two-variable geometric mean and Lemma \ref{L:2-mean}
\begin{displaymath}
\begin{split}
\rho_{\mathbf{u}} \#_{t} \rho_{\mathbf{v}} & = \left( \frac{1}{2 \gamma_{\mathbf{u}}} A \right) \#_{t} \left( \frac{1}{2 \gamma_{\mathbf{v}}} B \right) = \frac{1}{2 \gamma_{\mathbf{u}}^{1-t} \gamma_{\mathbf{v}}^{t}} A \#_{t} B \\
& = \frac{1}{2 \gamma_{\mathbf{u}}^{1-t} \gamma_{\mathbf{v}}^{t}} \left[ L_{1-t}(\mu) A + L_{t}(\mu) B \right] = L_{1-t}(\mu) \left( \frac{\gamma_{\mathbf{u}}}{\gamma_{\mathbf{v}}} \right)^{t} \rho_{\mathbf{u}} + L_{t}(\mu) \left( \frac{\gamma_{\mathbf{v}}}{\gamma_{\mathbf{u}}} \right)^{1-t} \rho_{\mathbf{v}}.
\end{split}
\end{displaymath}
Here, $\mu$ is an eigenvalue of $AB^{-1}$. Note that $\displaystyle AB^{-1} = \frac{\gamma_{\mathbf{u}}}{\gamma_{\mathbf{v}}} \rho_{\mathbf{u}} \rho_{\mathbf{v}}^{-1} = 4 \gamma_{\mathbf{u}} \rho_{\mathbf{u}} \rho_{- \mathbf{v}}$ by \eqref{E:inverse}, and $\rho_{\mathbf{u}} \rho_{- \mathbf{v}}$ is similar to
\begin{displaymath}
\rho_{\mathbf{u}}^{\frac{1}{2}} \rho_{- \mathbf{v}} \rho_{\mathbf{u}}^{\frac{1}{2}} = \tr (\rho_{\mathbf{u}} \rho_{- \mathbf{v}}) \rho_{\mathbf{u} \oplus_{E} (- \mathbf{v})} = \frac{1 - \mathbf{u}^{T} \mathbf{v}}{2} \rho_{\mathbf{u} \oplus_{E} (- \mathbf{v})}.
\end{displaymath}
The first equality follows from the gyrogroup isomorphism in \cite[Theorem 3.4]{Kim}. Thus, eigenvalues of $AB^{-1}$ are $\gamma_{\mathbf{u}} (1 - \mathbf{u}^{T} \mathbf{v}) (1 \pm \Vert \mathbf{u} \oplus_{E} (- \mathbf{v}) \Vert)$.
\end{proof}

\begin{remark} \label{R:mu}
There is an important and useful connection between the Lorentz factor of the Einstein vector sum and the Lorentz factors of the summands:
\begin{equation} \label{E:gamma-identity}
\gamma_{\mathbf{u} \oplus_{E} \mathbf{v}} = \gamma_{\mathbf{u}} \gamma_{\mathbf{v}} (1 + \mathbf{u}^{T} \mathbf{v}).
\end{equation}
Since $\gamma_{\mathbf{v}} = \gamma_{- \mathbf{v}}$, the value $\mu$ in Proposition \ref{P:2-mean} can be rewritten as
\begin{displaymath}
\mu = \frac{\gamma_{\mathbf{u} \oplus_{E} (- \mathbf{v})}}{\gamma_{\mathbf{v}}} (1 \pm \Vert \mathbf{u} \oplus_{E} (- \mathbf{v}) \Vert) = \frac{1}{\gamma_{\mathbf{v}}} \left[ \frac{1 \pm \Vert \mathbf{u} \oplus_{E} (- \mathbf{v}) \Vert}{1 \mp \Vert \mathbf{u} \oplus_{E} (- \mathbf{v}) \Vert} \right]^{1/2}.
\end{displaymath}
For the Einstein gyrogroup $(\mathbf{B}, \oplus_{E})$, A. Ungar considers what we call the Ungar gyrometric defined by
\begin{displaymath}
\varrho(\mathbf{u}, \mathbf{v}) = \Vert -\mathbf{u} \oplus_{E} \mathbf{v} \Vert = \Vert \mathbf{u} \oplus_{E} (-\mathbf{v}) \Vert.
\end{displaymath}
He also defines what we call the rapidity metric by $d_{E}(\mathbf{u}, \mathbf{v}) = \tanh^{-1} \varrho(\mathbf{u}, \mathbf{v})$. It is known as the Cayley-Klein metric on the Beltrami-Klein model of hyperbolic geometry (see \cite{FS05}), or the Bergman metric on the symmetric structure $\mathbf{B}$ with symmetries $S_{\mathbf{w}}(\mathbf{v}) = \mathbf{u} \oplus (-\mathbf{v})$ for some $\mathbf{u} = 2 \otimes \mathbf{w}$ (see \cite{Zhu}). Since $\displaystyle \tanh^{-1} t = \frac{1}{2} \ln \frac{1 + t}{1 - t}$ for $|t| < 1$, the value $\mu$ in Proposition \ref{P:2-mean} can be rewritten as
\begin{displaymath}
\mu = \frac{1}{\gamma_{\mathbf{v}}} e^{\pm d_{E}(\mathbf{u}, \mathbf{v})}.
\end{displaymath}
\end{remark}

Applying Proposition \ref{P:2x2 sp-geomean}, \eqref{E:inverse} and \eqref{E:gamma-identity} to $A = 2 \gamma_{\mathbf{u}} \rho_{\mathbf{u}}$ and $B = 2 \gamma_{\mathbf{v}} \rho_{\mathbf{v}}$, we obtain the explicit formula of the weighted spectral geometric mean for $2 \times 2$ invertible density matrices.
\begin{proposition}
For any $\mathbf{u}, \mathbf{v} \in \mathbf{B}$
\begin{displaymath}
\rho_{\mathbf{u}} \natural_{t} \rho_{\mathbf{v}} = \frac{2^{1+t} \gamma_{\mathbf{u}} (\rho_{- \mathbf{u}} + \gamma_{\mathbf{v}} \rho_{\mathbf{v}})^{t} \rho_{\mathbf{u}} (\rho_{- \mathbf{u}} + \gamma_{\mathbf{v}} \rho_{\mathbf{v}})^{t}}{(1 + \gamma_{\mathbf{u} \oplus_{E} \mathbf{v}})^{t}}, \ t \in [0,1]
\end{displaymath}
\end{proposition}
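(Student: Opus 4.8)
The plan is to reduce the density-matrix computation to the matrix formula already established in Proposition \ref{P:2x2 sp-geomean}. Following the hint, I would first set $A = 2\gamma_{\mathbf{u}}\rho_{\mathbf{u}}$ and $B = 2\gamma_{\mathbf{v}}\rho_{\mathbf{v}}$. By Remark \ref{R:eig} the determinant of a qubit state $\rho_{\mathbf{w}}$ is $1/(4\gamma_{\mathbf{w}}^{2})$, so $\det A = (2\gamma_{\mathbf{u}})^{2}\det\rho_{\mathbf{u}} = 1$ and likewise $\det B = 1$; thus $A, B \in \mathbb{P}_{2}$ satisfy exactly the hypothesis of Proposition \ref{P:2x2 sp-geomean}, giving $A \natural_{t} B = (A^{-1}+B)^{t} A (A^{-1}+B)^{t} / [2+\tr(AB)]^{t}$. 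It then remains to rewrite the three ingredients $A^{-1}+B$, $A$, and $2+\tr(AB)$ in terms of $\rho_{\mathbf{u}}, \rho_{\mathbf{v}}, \rho_{-\mathbf{u}}$ and the Lorentz factors.

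Next I would compute the inner factor. Using \eqref{E:inverse} in the form $\rho_{\mathbf{u}}^{-1} = 4\gamma_{\mathbf{u}}\rho_{-\mathbf{u}}$, one obtains $A^{-1} = \frac{1}{2\gamma_{\mathbf{u}}}\rho_{\mathbf{u}}^{-1} = 2\rho_{-\mathbf{u}}$, so that $A^{-1}+B = 2(\rho_{-\mathbf{u}}+\gamma_{\mathbf{v}}\rho_{\mathbf{v}})$ and hence $(A^{-1}+B)^{t} = 2^{t}(\rho_{-\mathbf{u}}+\gamma_{\mathbf{v}}\rho_{\mathbf{v}})^{t}$. Substituting this together with $A = 2\gamma_{\mathbf{u}}\rho_{\mathbf{u}}$ collapses the numerator to $2^{2t+1}\gamma_{\mathbf{u}}(\rho_{-\mathbf{u}}+\gamma_{\mathbf{v}}\rho_{\mathbf{v}})^{t}\rho_{\mathbf{u}}(\rho_{-\mathbf{u}}+\gamma_{\mathbf{v}}\rho_{\mathbf{v}})^{t}$, which already displays the exact matrix block appearing in the asserted right-hand side.

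For the denominator I would evaluate $\tr(AB) = 4\gamma_{\mathbf{u}}\gamma_{\mathbf{v}}\tr(\rho_{\mathbf{u}}\rho_{\mathbf{v}})$. The trace of a product of two qubit states in Bloch form is $\tr(\rho_{\mathbf{u}}\rho_{\mathbf{v}}) = (1+\mathbf{u}^{T}\mathbf{v})/2$, which is the same Pauli-algebra computation used to extract the eigenvalues in the proof of Proposition \ref{P:2-mean}, now carried out with $\mathbf{v}$ in place of $-\mathbf{v}$. This gives $\tr(AB) = 2\gamma_{\mathbf{u}}\gamma_{\mathbf{v}}(1+\mathbf{u}^{T}\mathbf{v})$, and the decisive step is to invoke the Einstein gamma identity \eqref{E:gamma-identity}, which identifies precisely this quantity as $2\gamma_{\mathbf{u}\oplus_{E}\mathbf{v}}$. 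Hence $2+\tr(AB) = 2(1+\gamma_{\mathbf{u}\oplus_{E}\mathbf{v}})$ and $[2+\tr(AB)]^{t} = 2^{t}(1+\gamma_{\mathbf{u}\oplus_{E}\mathbf{v}})^{t}$; dividing numerator by denominator cancels one factor of $2^{t}$ and yields the stated expression with prefactor $2^{1+t}\gamma_{\mathbf{u}}$.

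The one genuinely delicate point is the passage between $A \natural_{t} B$ and $\rho_{\mathbf{u}} \natural_{t} \rho_{\mathbf{v}}$ through the homogeneity Lemma \ref{L:SG}(2): since $\rho_{\mathbf{u}} = (2\gamma_{\mathbf{u}})^{-1}A$ and $\rho_{\mathbf{v}} = (2\gamma_{\mathbf{v}})^{-1}B$, that lemma contributes a scalar factor $(2\gamma_{\mathbf{u}})^{-(1-t)}(2\gamma_{\mathbf{v}})^{-t}$, exactly as the joint homogeneity step does in the proof of Proposition \ref{P:2-mean}. I therefore expect the main obstacle not to be any single identity but the careful bookkeeping of the powers of $2$, $\gamma_{\mathbf{u}}$ and $\gamma_{\mathbf{v}}$ across this rescaling, so that the scalar prefactor of the final formula is pinned down correctly; I would model that accounting directly on the corresponding computation already completed for the weighted geometric mean.
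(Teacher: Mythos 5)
Your overall strategy is the same as the paper's one-line proof: pass to $A = 2\gamma_{\mathbf{u}}\rho_{\mathbf{u}}$, $B = 2\gamma_{\mathbf{v}}\rho_{\mathbf{v}}$ with $\det A = \det B = 1$, apply Proposition \ref{P:2x2 sp-geomean}, and simplify with \eqref{E:inverse} and \eqref{E:gamma-identity}. But there is a genuine gap, and it sits exactly at the step you call ``delicate'' and then decline to carry out. Your third paragraph establishes (granting \eqref{E:inverse} as printed) that $A \natural_{t} B$ equals the right-hand side of the Proposition, with prefactor $2^{1+t}\gamma_{\mathbf{u}}$. Your fourth paragraph correctly records that Lemma \ref{L:SG} (2) gives $\rho_{\mathbf{u}} \natural_{t} \rho_{\mathbf{v}} = (2\gamma_{\mathbf{u}})^{-(1-t)}(2\gamma_{\mathbf{v}})^{-t} (A \natural_{t} B)$. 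These two facts are incompatible with the claimed conclusion: the rescaling factor is never $1$, so $A \natural_{t} B$ and $\rho_{\mathbf{u}} \natural_{t} \rho_{\mathbf{v}}$ cannot both equal the stated expression. If you actually do the bookkeeping you promise, the prefactor becomes $2^{t}\gamma_{\mathbf{u}}^{t}\gamma_{\mathbf{v}}^{-t}$, not $2^{1+t}\gamma_{\mathbf{u}}$. The quickest way to see that something is wrong: at $t = 0$ the stated right-hand side is $2\gamma_{\mathbf{u}}\rho_{\mathbf{u}}$, while $\rho_{\mathbf{u}} \natural_{0} \rho_{\mathbf{v}} = \rho_{\mathbf{u}}$. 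In other words, the printed Proposition records the formula for $A \natural_{t} B$ (which is all the paper's own proof computes --- it never undoes the normalization), and your correct instinct to invoke Lemma \ref{L:SG} (2) is precisely the point where a faithful execution of your plan diverges from the printed statement. Asserting that the powers of $2$, $\gamma_{\mathbf{u}}$, $\gamma_{\mathbf{v}}$ ``work out'' is not a proof, and here they demonstrably do not.

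There is a second, independent defect inherited from \eqref{E:inverse}: as printed, that identity is itself off by a factor of $\gamma_{\mathbf{u}}$. Since the eigenvalues of $\rho_{\mathbf{u}}$ are $(1 \pm \Vert\mathbf{u}\Vert)/2$, one has $\tr(\rho_{\mathbf{u}}^{-1}) = 4\gamma_{\mathbf{u}}^{2}$, so the gyrogroup inverse is $\rho_{-\mathbf{u}} = \rho_{\mathbf{u}}^{-1}/(4\gamma_{\mathbf{u}}^{2})$, and consequently $A^{-1} = 2\gamma_{\mathbf{u}}\rho_{-\mathbf{u}}$, not $2\rho_{-\mathbf{u}}$. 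Running your argument with both corrections, what your method actually proves is
\begin{displaymath}
\rho_{\mathbf{u}} \natural_{t} \rho_{\mathbf{v}} = \left(\frac{2\gamma_{\mathbf{u}}}{\gamma_{\mathbf{v}}}\right)^{t} \frac{(\gamma_{\mathbf{u}}\rho_{-\mathbf{u}} + \gamma_{\mathbf{v}}\rho_{\mathbf{v}})^{t}\, \rho_{\mathbf{u}}\, (\gamma_{\mathbf{u}}\rho_{-\mathbf{u}} + \gamma_{\mathbf{v}}\rho_{\mathbf{v}})^{t}}{(1 + \gamma_{\mathbf{u} \oplus_{E} \mathbf{v}})^{t}},
\end{displaymath}
which does pass the boundary checks at $t = 0$ and $t = 1$. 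So your reduction scheme is sound and identical to the paper's; the failure is that your last two steps contradict each other, and resolving the contradiction changes the conclusion rather than confirming it.
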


\begin{proposition} \label{P:vectors}
For any $\mathbf{u}, \mathbf{v} \in \mathbf{B}$,
\begin{displaymath}
\frac{1 + \Vert \mathbf{m} \Vert}{1 - \Vert \mathbf{m} \Vert} \leq \left[ \frac{( 1 + \Vert \mathbf{u} \Vert )( 1 + \Vert \mathbf{v} \Vert )}{( 1 - \Vert \mathbf{u} \Vert )( 1 + \Vert \mathbf{v} \Vert )} \right]^{1/2},
\end{displaymath}
where $\displaystyle \mathbf{m} = \frac{\gamma_{\mathbf{u}} \mathbf{u} + \gamma_{\mathbf{v}} \mathbf{v}}{\gamma_{\mathbf{u}} + \gamma_{\mathbf{v}}}$.
\end{proposition}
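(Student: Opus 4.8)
The plan is to translate this Bloch-vector inequality into a statement about the Thompson distance $d_{T}$ between positive definite matrices and then close it by the geodesic convexity of the distance function. The left-hand ratio $\frac{1 + \Vert \mathbf{m} \Vert}{1 - \Vert \mathbf{m} \Vert}$ is, by Remark \ref{R:eig}, the ratio of eigenvalues of the qubit state $\rho_{\mathbf{m}}$, and each factor on the right is the corresponding ratio for $\rho_{\mathbf{u}}$ and $\rho_{\mathbf{v}}$; so after taking $\tfrac{1}{2}\log$ the assertion becomes an inequality about Thompson distances to the identity.

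First I would pass to the determinant-one normalizations $A = 2\gamma_{\mathbf{u}} \rho_{\mathbf{u}}$ and $B = 2\gamma_{\mathbf{v}} \rho_{\mathbf{v}}$, which by Remark \ref{R:eig} lie in $\mathbb{P}_{2}$ with $\det A = \det B = 1$ and eigenvalues $\gamma_{\mathbf{u}}(1 \pm \Vert \mathbf{u} \Vert)$, respectively $\gamma_{\mathbf{v}}(1 \pm \Vert \mathbf{v} \Vert)$. The key numerical dictionary is that $\gamma_{\mathbf{u}}(1 + \Vert \mathbf{u} \Vert) = \left[ (1 + \Vert \mathbf{u} \Vert)/(1 - \Vert \mathbf{u} \Vert) \right]^{1/2}$, so that, because $A$ has determinant one, $d_{T}(I, A) = \log \lambda_{\max}(A) = \tanh^{-1} \Vert \mathbf{u} \Vert$, and likewise $d_{T}(I, B) = \tanh^{-1} \Vert \mathbf{v} \Vert$.

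Next I would identify $\mathbf{m} = \frac{\gamma_{\mathbf{u}} \mathbf{u} + \gamma_{\mathbf{v}} \mathbf{v}}{\gamma_{\mathbf{u}} + \gamma_{\mathbf{v}}}$ as the Einstein gyromidpoint $L(1/2; \mathbf{u}, \mathbf{v})$. Under the gyrovector-space isomorphism $\rho$ together with Theorem \ref{T:gyrolines2}, the gyromidpoint corresponds to $L(1/2; \rho_{\mathbf{u}}, \rho_{\mathbf{v}}) = \frac{1}{\tr(\rho_{\mathbf{u}} \# \rho_{\mathbf{v}})} \rho_{\mathbf{u}} \# \rho_{\mathbf{v}}$, so $\rho_{\mathbf{m}}$ is a positive scalar multiple of $\rho_{\mathbf{u}} \# \rho_{\mathbf{v}}$. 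By joint homogeneity of $\#$ one has $A \# B = 2 \sqrt{\gamma_{\mathbf{u}} \gamma_{\mathbf{v}}}\, (\rho_{\mathbf{u}} \# \rho_{\mathbf{v}})$, which again has determinant one; hence $A \# B$ is exactly the determinant-one normalization $2 \gamma_{\mathbf{m}} \rho_{\mathbf{m}}$ of $\rho_{\mathbf{m}}$, and the same eigenvalue computation as above yields $d_{T}(I, A \# B) = \tanh^{-1} \Vert \mathbf{m} \Vert$. With this dictionary the claimed inequality is precisely
\[
d_{T}(I, A \# B) \le \tfrac{1}{2} \left[ d_{T}(I, A) + d_{T}(I, B) \right].
\]

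To finish, note that $A \# B$ is the midpoint of the geodesic $t \mapsto A \#_{t} B$ from $A$ to $B$, and that this geodesic together with $I$ lies in the totally geodesic determinant-one submanifold of $\mathbb{P}_{2}$, which for the Riemannian trace metric $\delta$ is a Hadamard space (the hyperbolic plane) on which $\delta$ and $d_{T}$ are mutually proportional. Since the distance function $t \mapsto d_{T}(I, A \#_{t} B)$ is convex along geodesics in a nonpositively curved (CAT($0$)) space, evaluating convexity at $t = 1/2$ gives the displayed inequality, and undoing the $\tanh^{-1}$ change of variables returns the stated bound. The main obstacle is the middle step: one must pin down cleanly that $\mathbf{m}$ is the gyromidpoint, so that $\rho_{\mathbf{m}}$ is proportional to $\rho_{\mathbf{u}} \# \rho_{\mathbf{v}}$ and the left-hand eigenvalue ratio is genuinely $\exp(2\, d_{T}(I, A \# B))$. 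Once that identification is secured, the analytic content is just the standard geodesic convexity of the distance function.
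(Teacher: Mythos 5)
Your proposal is correct in substance, but it takes a genuinely different route from the paper. (Both arguments, incidentally, prove the inequality with right-hand denominator $(1-\Vert\mathbf{u}\Vert)(1-\Vert\mathbf{v}\Vert)$; the $(1+\Vert\mathbf{v}\Vert)$ appearing there in the statement is a typo, as the paper's own remark restating the result as $2d_{E}(\mathbf{0},\mathbf{m})\le d_{E}(\mathbf{0},\mathbf{u})+d_{E}(\mathbf{0},\mathbf{v})$ confirms.) The paper never confronts what you call the main obstacle: instead of identifying $\rho_{\mathbf{m}}$ with the normalized geometric mean via gyromidpoint theory, it uses only linearity of the Bloch representation, $\gamma_{\mathbf{u}}\rho_{\mathbf{u}}+\gamma_{\mathbf{v}}\rho_{\mathbf{v}}=(\gamma_{\mathbf{u}}+\gamma_{\mathbf{v}})\rho_{\mathbf{m}}$, which is immediate from the definition of $\rho_{\mathbf{v}}$, and then feeds $A=2\gamma_{\mathbf{u}}\rho_{\mathbf{u}}$, $B=2\gamma_{\mathbf{v}}\rho_{\mathbf{v}}$ into Corollary \ref{C:norm}, i.e.\ into Ando's norm Schwarz inequality (Lemma \ref{L:norm}) for the block matrix $\bigl(\begin{smallmatrix} A & A\# B\\ A\# B & B\end{smallmatrix}\bigr)\ge 0$ combined with the $2\times 2$ formula $A\# B=(A+B)/\sqrt{\det(A+B)}$; no metric geometry enters at all. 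You instead secure $\rho_{\mathbf{m}}\propto\rho_{\mathbf{u}}\#\rho_{\mathbf{v}}$ through the Einstein gyromidpoint formula, the isomorphism with $(\mathbb{D}_{2},\odot,\star)$ and Theorem \ref{T:gyrolines2}, reduce the claim to $d_{T}(I,A\# B)\le\tfrac12[d_{T}(I,A)+d_{T}(I,B)]$, and close with convexity of the distance function on the totally geodesic determinant-one slice, where indeed $\delta=\sqrt{2}\,d_{T}$. Those steps all check out (one quibble: for Hermitian matrices that slice is hyperbolic $3$-space, not the hyperbolic plane, which is harmless since it is still CAT($0$)). What your route buys is conceptual clarity and generality: the inequality is exposed as geodesic convexity of the distance to the identity, i.e.\ the midpoint is no farther from the origin than the average of the endpoints, an argument valid in $\mathbb{P}_{n}$ for any $n$. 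What the paper's route buys is economy: everything is elementary matrix analysis already proved earlier in the paper. Note also that the norm inequality both arguments ultimately rest on, $\Vert A\# B\Vert\le\sqrt{\Vert A\Vert\,\Vert B\Vert}$, has a one-line proof from monotonicity and joint homogeneity of $\#$, namely $A\# B\le(\Vert A\Vert I)\#(\Vert B\Vert I)=\sqrt{\Vert A\Vert\,\Vert B\Vert}\,I$, so you could dispense with the CAT($0$) machinery entirely.
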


\begin{proof}
Let $A = 2 \gamma_{\mathbf{u}} \rho_{\mathbf{u}}$ and $B = 2 \gamma_{\mathbf{v}} \rho_{\mathbf{v}}$. Then $A, B \in \mathbb{P}_{2}$ and their determinants are $1$ by Remark \ref{R:eig}. Moreover,
\begin{displaymath}
\Vert A \Vert = 2 \gamma_{\mathbf{u}} \cdot \frac{1 + \Vert \mathbf{u} \Vert}{2} = \gamma_{\mathbf{u}} (1 + \Vert \mathbf{u} \Vert) = \sqrt{\frac{1 + \Vert \mathbf{u} \Vert}{1 - \Vert \mathbf{u} \Vert}},
\end{displaymath}
where $\Vert \cdot \Vert$ is the operator norm. Similarly $\Vert B \Vert = \gamma_{\mathbf{v}} (1 + \Vert \mathbf{v} \Vert)$.
Applying Corollary \ref{C:norm} to $A$ and $B$ yields
\begin{displaymath}
\Vert \gamma_{\mathbf{u}} \rho_{\mathbf{u}} + \gamma_{\mathbf{v}} \rho_{\mathbf{v}} \Vert^{2} \leq \det ( \gamma_{\mathbf{u}} \rho_{\mathbf{u}} + \gamma_{\mathbf{v}} \rho_{\mathbf{v}} ) \gamma_{\mathbf{u}} (1 + \Vert \mathbf{u} \Vert) \gamma_{\mathbf{v}} (1 + \Vert \mathbf{v} \Vert).
\end{displaymath}
Since $\gamma_{\mathbf{u}} \rho_{\mathbf{u}} + \gamma_{\mathbf{v}} \rho_{\mathbf{v}} = (\gamma_{\mathbf{u}} + \gamma_{\mathbf{v}}) \rho_{\mathbf{m}}$, it reduces to
\begin{displaymath}
\Vert \rho_{\mathbf{m}} \Vert^{2} \leq \det \left( \rho_{\mathbf{m}} \right) \gamma_{\mathbf{u}} (1 + \Vert \mathbf{u} \Vert) \gamma_{\mathbf{v}} (1 + \Vert \mathbf{v} \Vert).
\end{displaymath}
Since $\Vert \rho_{\mathbf{m}} \Vert = \frac{1 + \Vert \mathbf{m} \Vert}{2}$ and $\det \rho_{\mathbf{m}} = \frac{1 - \Vert \mathbf{m} \Vert^{2}}{4}$, we obtain the desired inequality from a simplification of the above inequality.
\end{proof}

\begin{remark}
In Theorem 4.2, \cite{KKL}, it has been shown that
\begin{displaymath}
\displaystyle \frac{\rho_{\mathbf{u}} \# \rho_{\mathbf{v}}}{\tr ( \rho_{\mathbf{u}} \# \rho_{\mathbf{v}} )} = \frac{\gamma_{\mathbf{u}} \rho_{\mathbf{u}} + \gamma_{\mathbf{v}} \rho_{\mathbf{v}}}{\gamma_{\mathbf{u}} + \gamma_{\mathbf{v}}} = \rho_{\mathbf{m}}
\end{displaymath}
for any $\mathbf{u}, \mathbf{v} \in \mathbf{B}$, where 
\begin{displaymath}
\mathbf{m} = \frac{\gamma_{\mathbf{u}} \mathbf{u} + \gamma_{\mathbf{v}} \mathbf{v}}{\gamma_{\mathbf{u}} + \gamma_{\mathbf{v}}} = \frac{1}{2} \otimes (\mathbf{u} \boxplus_{E} \mathbf{v}) = L \left( \frac{1}{2}; \mathbf{u}, \mathbf{v} \right)
\end{displaymath}
is a gyromidpoint on the Einstein gyrovector space $(\mathbf{B}, \oplus_{E}, \otimes)$. See Section 6.22 in \cite{Un08} for more information about the gyromidpoint and gyrocentroid. So Theorem \ref{T:gyrolines2} and Proposition \ref{P:2-mean} give us generalizations of the result in Theorem 4.2, \cite{KKL}.
Moreover, the inequality in Proposition \ref{P:vectors} can be rewritten as
\begin{displaymath}
2 d_{E} (\mathbf{0}, \mathbf{m}) \leq d_{E} (\mathbf{0}, \mathbf{u}) + d_{E} (\mathbf{0}, \mathbf{v}),
\end{displaymath}
where $d_{E}(\mathbf{u}, \mathbf{v}) = \tanh^{-1} \Vert -\mathbf{u} \oplus_{E} \mathbf{v} \Vert$ is the rapidity metric on the Einstein gyrovector space $(\mathbf{B}, \oplus_{E}, \otimes)$: see Remark \ref{R:mu}.
\end{remark}

\section{Final remarks}

In this section we investigate the semi-metric $d$ given by
\begin{equation} \label{E:semimetric}
\displaystyle d(A, B) = 2 \Vert \log (A^{-1} \# B) \Vert_{2}
\end{equation}
on the open convex cone $\mathbb{P}_{n}$ of positive definite matrices. Even though the semi-metric $d$ in \eqref{E:semimetric} is defined by the Frobenius norm unlike Lemma \ref{L:semimetric} with the operator norm, all properties in Proposition \ref{P:semimetric} and Theorem \ref{T:semimetric} are satisfied for $d$. The interesting consequence in the geometric point of view is that the geometric mean $A \# B$ of $A, B \in \mathbb{P}_{n}$ is the unique midpoint with respect to the Riemannian trace metric $\delta(A, B) = \Vert \log A^{-1/2} B A^{-1/2} \Vert_{2}$, meanwhile the spectral geometric mean $A \natural B$ is the midpoint with respect to the semi-metric $d$.

\begin{remark}
The Riemannian trace metric $\delta$ on the cone $\mathbb{P}$ can be rewritten as
\begin{displaymath}
\displaystyle \delta(A, B) = \Vert \log (\ominus A \oplus B) \Vert_{2}.
\end{displaymath}
Meanwhile, the semimetric in Lemma \ref{L:semimetric} can be rewritten as
\begin{displaymath}
\displaystyle d(A, B) = \Vert \log (\boxminus A \boxplus B) \Vert_{2},
\end{displaymath}
where $\boxminus A = \ominus A = (-1) \circ A$. Note that the Riemannian metric $\delta$ and the semimetric $d$ are related with the gyrogroup operation and cooperation on $\mathbb{P}$, respectively.
\end{remark}

We also compare with the semi-metric $d$ and the Riemannian trace metric $\delta$ on the open convex cone $\mathbb{P}_{n}$. First, we review the concepts of majorization. For Hermitian matrices $H$, let $\lambda(H) = ( \lambda_{1}(H), \dots, \lambda_{n}(H) )$ be the vector of all eigenvalues of $H$ such that $\lambda_{1}(H) \geq \cdots \geq \lambda_{n}(H)$, and let $| \lambda(H) |$ be the vector consisting of absolute values of eigenvalues of $H$ arranging in decreasing order. For Hermitian matrices $H$ and $K$, we say that $\lambda(K)$ \emph{weakly majorizes} $\lambda(H)$, written as $\lambda(H) \prec_{w} \lambda(K)$, if and only if
\begin{displaymath}
\sum_{i=1}^{k} \lambda_{i}(H) \leq \sum_{i=1}^{k} \lambda_{i}(K), \ k = 1, \dots, n.
\end{displaymath}
If the equality holds for $k = n$, in addition, then we say that $\lambda(K)$ \emph{majorizes} $\lambda(H)$, written as $\lambda(H) \prec \lambda(K)$. For positive semi-definite matrices $A$ and $B$, we say that $\lambda(B)$ \emph{weakly log-majorizes} $\lambda(A)$, written as $\lambda(A) \prec_{w \log} \lambda(B)$, if and only if
\begin{displaymath}
\prod_{i=1}^{k} \lambda_{i}(H) \leq \prod_{i=1}^{k} \lambda_{i}(K), \ k = 1, \dots, n.
\end{displaymath}
If the equality holds for $k = n$, in addition, then we say that $\lambda(B)$ \emph{log-majorizes} $\lambda(A)$, written as $\lambda(A) \prec_{\log} \lambda(B)$. So $\lambda(A) \prec_{w \log} \lambda(B)$ (or $\lambda(A) \prec_{\log} \lambda(B)$) for $A, B \in \mathbb{P}$ implies that $\lambda(\log A) \prec_{w} \lambda(\log B)$ (respectively, or $\lambda(\log A) \prec \lambda(\log B)$).

\begin{theorem}
For any $A, B \in \mathbb{P}_{n}$,
\begin{displaymath}
d(A, B) \leq \delta(A, B).
\end{displaymath}
The equality holds for commuting matrices $A, B \in \mathbb{P}_{n}$.
\end{theorem}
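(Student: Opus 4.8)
The plan is to convert the operator statement into an eigenvalue inequality and then feed it through the majorization facts recalled in this section. Write $\sigma_1,\dots,\sigma_n$ for the eigenvalues of $A^{-1}\#B$ and $\lambda_1,\dots,\lambda_n$ for those of $A^{-1/2}BA^{-1/2}$ (equivalently of $A^{-1}B$, which has positive spectrum). Since $\Vert H\Vert_2^2=\sum_i\lambda_i(H)^2$ for Hermitian $H$, we have $d(A,B)^2=4\sum_i(\log\sigma_i)^2=\sum_i(\log\sigma_i^2)^2$ and $\delta(A,B)^2=\sum_i(\log\lambda_i)^2$, so it suffices to prove $\sum_i(\log\sigma_i^2)^2\le\sum_i(\log\lambda_i)^2$. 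By the implication recalled above, the log-majorization $\lambda\big((A^{-1}\#B)^2\big)\prec_{\log}\lambda(A^{-1/2}BA^{-1/2})$ yields the majorization $\lambda\big(\log(A^{-1}\#B)^2\big)\prec\lambda\big(\log A^{-1/2}BA^{-1/2}\big)$ of the two eigenvalue vectors; since $t\mapsto t^2$ is convex, majorization transfers the sum-of-squares inequality in exactly the required direction. Thus everything reduces to one log-majorization.

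I would isolate it as a lemma: for $P,Q\in\mathbb{P}_n$,
\[
\lambda\big((P\#Q)^2\big)\prec_{\log}\lambda(PQ),
\]
and apply it with $P=A^{-1}$, $Q=B$. The proof proceeds by the antisymmetric tensor power (compound matrix) technique. The compound $\wedge^k$ is multiplicative and $*$-preserving, hence compatible with inverses and with positive square roots, and therefore with the geometric mean: $\wedge^k(P\#Q)=(\wedge^k P)\#(\wedge^k Q)$, while $\lambda_{\max}(\wedge^k M)=\prod_{i=1}^k\lambda_i(M)$ for positive $M$. Consequently the product inequalities $\prod_{i=1}^k\lambda_i(P\#Q)\le\big(\prod_{i=1}^k\lambda_i(PQ)\big)^{1/2}$, for all $k$, follow from the single top-eigenvalue estimate $\lambda_{\max}(P\#Q)\le\lambda_{\max}(PQ)^{1/2}$ applied to $\wedge^k P,\wedge^k Q$. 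The endpoint $k=n$ holds with equality because $\det(P\#Q)=\sqrt{\det P\det Q}=\sqrt{\det(PQ)}$, which upgrades weak log-majorization to log-majorization; squaring then gives the lemma.

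The top-eigenvalue estimate is where monotonicity of the geometric mean enters, and it is short. Using the homogeneity $P\#(sQ)=s^{1/2}(P\#Q)$, rescale so that $\lambda_{\max}(PQ)=1$, i.e. $P^{1/2}QP^{1/2}\le I$, i.e. $Q\le P^{-1}$; then monotonicity gives $P\#Q\le P\#P^{-1}=I$, whence $\lambda_{\max}(P\#Q)\le 1$, as needed. For the equality clause, when $A$ and $B$ commute one has $A^{-1}\#B=(A^{-1}B)^{1/2}$, so $(A^{-1}\#B)^2=A^{-1}B$ and the majorization becomes an equality, forcing $d(A,B)=\delta(A,B)$.

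I expect the only genuinely delicate point to be the compatibility $\wedge^k(P\#Q)=(\wedge^k P)\#(\wedge^k Q)$, since $\wedge^k$ is multiplicative but not additive; once it is justified (the geometric mean is built only from products, inverses and square roots, all of which $\wedge^k$ respects), the reduction to $k=1$ and the monotonicity step are routine. As a sanity check and an alternative route, note that $A^{-1}\#A=I$ and $A^{-1}\#B$ are the midpoints of the Riemannian geodesics $[A^{-1},A]$ and $[A^{-1},B]$, so $d(A,B)=2\delta(A^{-1}\#B,\,I)$ is twice the length of a midsegment of the geodesic triangle $A^{-1},A,B$; the nonpositive curvature of $(\mathbb{P}_n,\delta)$ bounds this midsegment by $\tfrac12\delta(A,B)$ and recovers the inequality directly. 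I would nonetheless keep the majorization argument as the primary proof, as it matches the tools developed in this section and requires no metric geometry.
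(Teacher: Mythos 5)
Your proof is correct, but it follows a genuinely different route from the paper's. The paper cites Matharu--Aujla for the log-majorization $\lambda(A\#_{t}B)\prec_{\log}\lambda(A^{1-t}B^{t})$, specializes to $t=1/2$, passes through weak majorization of singular values and the Fan dominance theorem to obtain $\Vert\log(A^{-1}\# B)\Vert_{2}\leq\Vert\log(A^{-1/2}B^{1/2})\Vert_{2}$, and then still needs a second, separate ingredient: the contraction estimate $2\Vert\log(A^{-1/2}B^{1/2})\Vert_{2}=2\delta(A^{1/2},B^{1/2})\leq\delta(A,B)$ coming from metric convexity of $\delta$. You instead prove, self-contained by the antisymmetric tensor power technique, the ``squared'' log-majorization $\lambda\bigl((P\#Q)^{2}\bigr)\prec_{\log}\lambda(PQ)$, which compares directly with $\lambda(A^{-1}B)=\lambda(A^{-1/2}BA^{-1/2})$; the factor $2$ is absorbed into the lemma, so the theorem follows in one step from majorization plus convexity of $t\mapsto t^{2}$, with no metric-convexity step and no citation doing the main work. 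Two details of yours deserve emphasis: (i) upgrading weak log-majorization to full log-majorization via $\det(P\#Q)=\det(PQ)^{1/2}$ is essential, not cosmetic, since $t^{2}$ is convex but not monotone, so the convexity transfer requires genuine (not weak) majorization --- you handled this correctly; (ii) your chain keeps every matrix positive definite, avoiding the paper's somewhat delicate use of $\log(A^{-1/2}B^{1/2})$, the logarithm of a non-Hermitian matrix. What the paper's proof buys is brevity given the literature and a conclusion valid for all unitarily invariant norms before the final Frobenius-specific step; what yours buys is self-containedness and a cleaner reduction. One harmless imprecision: you state $\lambda_{\max}(\wedge^{k}M)=\prod_{i=1}^{k}\lambda_{i}(M)$ ``for positive $M$'' but apply it to $PQ$, which is not Hermitian; this is fine because $\wedge^{k}(PQ)=(\wedge^{k}P)(\wedge^{k}Q)$ is similar to $\wedge^{k}(P^{1/2}QP^{1/2})$, so its spectrum consists of the $k$-fold products of the (positive) eigenvalues of $PQ$ --- worth one sentence in a final write-up. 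Your closing NPC sketch, $d(A,B)=2\delta(A^{-1}\#B,\,I)$ with $I=A^{-1}\#A$ and convexity of $\delta$ along geodesics issuing from $A^{-1}$, is also valid and is essentially the geometric distillation of the paper's final inequality.
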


\begin{proof}
It has been shown in \cite{MA} that
\begin{displaymath}
\lambda(A \#_{t} B) \prec_{\log} \lambda(A^{1-t} B^{t})
\end{displaymath}
for any $A, B \in \mathbb{P}_{n}$ and $t \in [0,1]$. So
\begin{displaymath}
\lambda(\log (A^{-1} \# B)) \prec \lambda(\log (A^{-1/2} B^{1/2}))
\end{displaymath}
By Corollary 10.1 in \cite{Zh} and the fact that the singular values of Hermitian matrices are the absolute values of their eigenvalues,
\begin{displaymath}
s(\log (A^{-1} \# B)) = | \lambda(\log (A^{-1} \# B)) | \prec_{w} | \lambda(\log (A^{-1/2} B^{1/2})) | = s(\log (A^{-1/2} B^{1/2})),
\end{displaymath}
where $s(H)$ denotes the vector of singular values of a Hermitian matrix $H$ in decreasing order. By the Fan Dominance Theorem in \cite[Theorem IV.2.2]{Bha},
\begin{displaymath}
||| \log (A^{-1} \# B) ||| \leq ||| \log (A^{-1/2} B^{1/2}) |||
\end{displaymath}
for any unitarily invariant norm $||| \cdot |||$. Since the Frobenius norm $|| \cdot ||_{2}$ is unitarily invariant and the metric convexity of Riemannian trace metric in \cite[Proposition 6.1.10]{Bh}, we obtain
\begin{displaymath}
d(A, B) = 2 \Vert \log (A^{-1} \# B) \Vert_{2} \leq 2 \Vert \log (A^{-1/2} B^{1/2}) \Vert_{2} = 2 \delta(A^{1/2}, B^{1/2}) \leq \delta(A, B).
\end{displaymath}

Moreover, any commuting matrices $A, B \in \mathbb{P}_{n}$ are simultaneously diagonalizable by Theorem 1.3.12 in \cite{HJ}. So
\begin{displaymath}
d(A, B) = \Vert \log A - \log B \Vert_{2} = \delta(A, B).
\end{displaymath}
\end{proof}

According to above remarks and properties of the semi-metric, it is an interesting symmetric divergence. So it would be a good project to find other geometric properties of the semi-metric with the spectral geometric mean such as the metric convexity and the extension to multi-variable spectral geometric means.

\vspace{4mm}

\textbf{Acknowledgement} \\

This work was supported by the National Research Foundation of Korea (NRF) grant funded by the Korea government (No. NRF-2018R1C1B6001394).

\end{document}